\documentclass[12pt,leqno,twoside]{amsart}
\usepackage{amsmath,amscd,amssymb,amsfonts,latexsym,mathrsfs,bm,extarrows,graphicx,stackrel}
\usepackage[all,cmtip]{xy}
\usepackage{ulem}
\usepackage{tikz}

\usepackage{srcltx}
\usepackage[colorlinks=true, pdfstartview=FitV, linkcolor=blue, citecolor=blue, urlcolor=blue]{hyperref}
\usepackage{subcaption} 
\usepackage{amsbsy}
\usepackage{amsthm,alltt,dsfont}

\usepackage[margin=1.2in]{geometry}

\usepackage[utf8]{inputenc}
\usepackage{amsthm,soul}
\usepackage{enumitem}
\usepackage{setspace,kantlipsum}
\usepackage{tikz-cd}
\usepackage{textcomp}

\usepackage{colonequals}
\usepackage{mathtools,comment}

\usepackage{graphicx,bm}
\usepackage{mathptmx}



\theoremstyle{plain} 
\newtheorem{thm}{Theorem}[section]
\newtheorem{lemma}[thm]{Lemma}
\newtheorem{proposition}[thm]{Proposition}
\newtheorem{corollary}[thm]{Corollary}

\theoremstyle{definition}

\newtheorem{definition}[thm]{Definition}

\newtheorem{example}[thm]{Example}
\newtheorem{remark}[thm]{Remark}

\newtheorem{conjecture}[thm]{Conjecture}


\newcommand{\Z}{\mathbb{Z}}
\newcommand{\Q}{\mathbb{Q}}

\newcommand{\cD}{\mathcal{D}}

\newcommand{\cF}{\mathcal{F}}
\newcommand{\cG}{\mathcal{G}}
\newcommand{\cH}{\mathcal{H}}

\newcommand{\cL}{\mathcal{L}}
\newcommand{\cM}{\mathcal{M}}

\newcommand{\cO}{\mathcal{O}}

\newcommand{\cS}{\mathcal{S}}

\newcommand{\cV}{\mathcal{V}}

\newcommand{\bC}{\mathbf{C}}
\newcommand{\bD}{\mathbf{D}}

\newcommand{\bK}{\mathbf{K}}
\newcommand{\bL}{\mathbf{L}}

\newcommand{\bQ}{\mathbf{Q}}
\newcommand{\bR}{\mathbf{R}}

\newcommand{\bV}{\mathbf{V}}

\newcommand{\bZ}{\mathbf{Z}}

\newcommand{\sD}{\mathscr{D}}



\newcommand{\Gr}{\mathrm{Gr}}
\newcommand{\gr}{\mathrm{Gr}}
\newcommand{\sd}{\mathrm{sd}}
\newcommand{\Pol}{\mathrm{Pol}}

\newcommand{\DR}{\mathrm{DR}}

\newcommand{\MHM}{\mathrm{MHM}}
\newcommand{\MH}{\mathrm{MH}}




\DeclareMathOperator{\rk}{rank}
\DeclareMathOperator{\im}{Image}

\newcommand{\Hom}{\mathrm{Hom}}


\newcommand{\mhm}{\mathrm{MHM}}
\newcommand{\td}{\mathrm{td}}
\newcommand{\MHS}{\mathrm{mHs}}
\newcommand{\rat}{{\it {rat}}}


\begin{document}

\title[Class version of the Hodge index theorem]{Around a class version of the Hodge index theorem  for singular varieties}


\author{Mohammadali Aligholi}
\address{M. Aligholi : Department of Mathematics, University of Wisconsin-Madison, 480 Lincoln Drive, Madison WI 53706-1388, USA}
\email{aligholi@wisc.edu}

\author{Lauren\c{t}iu Maxim}
\address{L. Maxim : Department of Mathematics, University of Wisconsin-Madison, 480 Lincoln Drive, Madison WI 53706-1388, USA, \newline
{\text and} \newline Institute of Mathematics of the Romanian Academy, P.O. Box 1-764, 70700 Bucharest, ROMANIA}
\email{maxim@math.wisc.edu}

\author{J\"{o}rg Sch\"{u}rmann}
\address{J. Sch\"{u}rmann : Mathematisches Institut, Universti\"{a}t M\"{u}nster, Einsteinstr. 62, 48149 M\"{u}nster, Germany}
\email{jschuerm@uni-muenster.de}

\begin{abstract}
We give an overview of recent developments around a characteristic class version of the Hodge index theorem for singular complex algebraic varieties. This was formulated by Brasselet-Sch\"urmann-Yokura as a conjecture expressing the Goresky-MacPherson homology $L$-classes in terms of suitable Hodge-theoretic $L$-classes. Along the way, we clarify the relationship between several notions of  $L$-classes appearing in the literature, but we also include many new cases for which the conjecture is true, e.g., all compact toric varieties, all (matroid) Schubert varieties, 
all Richardson and intersection varieties,
all projective simply connected spherical varieties, and all compact complex algebraic surfaces and threefolds. 
\end{abstract}


\subjclass[2020]{57R20, 14B05, 32S35, 14C40}

\keywords{signature, intersection homology, $L$-classes, Hirzebruch classes, Hodge index theorem, toric varieties, (matroid) Schubert varieties}

\maketitle

\tableofcontents

\section{Introduction}
The classical Hodge index theorem $\sigma(X)=\chi_1(X)$   expresses the signature $\sigma(X)$ of a compact complex manifold $X$ as the value at $y=1$ of the Hirzebruch $\chi_y$-genus $\chi_y(X)$; see \cite{H}. In the complex algebraic context, this computes the signature of $X$ in terms of its Hodge numbers,
based on the hard Lefschetz theorem and the corresponding Hodge-Riemann bilinear relations. \\

The Hodge index theorem has been extended to the singular context 
by using the signature and, respectively, intersection cohomology $\chi_y$-genus, $I\chi_y$, defined via the Goresky-MacPherson (middle-perversity)  intersection cohomology groups \cite{GM}. Specifically, for 
a (possibly singular) compact complex algebraic variety $X$ of pure dimension, the signature $\sigma(X)$ of the Poincar\'e duality pairing on the middle intersection cohomology group can be computed as 
$$\sigma(X) = I\chi_1(X)\:,$$ i.e., as a certain expression in the Hodge numbers of the pure Hodge structure on the intersection cohomology groups of $X$ (defined via M. Saito's theory of pure Hodge modules); see \cite{MSS0,FPS}.\\

The signature $\sigma(X)$ and the intersection cohomology Hirzebruch genus $I\chi_y(X)$ of a pure-dimensional compact complex algebraic variety are the degrees of the Goresky-MacPherson homology $L$-class $L_*(X)$ \cite{GM} and, respectively, of the intersection cohomology Hirzebruch class $IT_{y*}(X)$ \cite{BSY}, and it was conjectured in \cite{BSY} that a corresponding 
characteristic class version  of the Hodge index theorem should hold. 
More precisely, one has the following:
\begin{conjecture}[Characteristic class version of Hodge index theorem] \label{BSYi}
Let $X$ be a compact pure-dimensional complex algebraic variety. Then
\begin{equation}\label{conji}
L_*(X)=IT_{1*}(X).
\end{equation}
\end{conjecture}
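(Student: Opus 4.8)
\noindent\emph{Sketch of strategy, and the main obstruction.}
The plan is to read both sides of \eqref{conji} as values of \emph{natural transformations} and to compare them through the decomposition theorem. On the Hodge side, $IT_{y*}(X)$ is (up to the usual normalization) the image $T_{y*}([\mathrm{IC}^H_X])$ of Saito's intersection cohomology Hodge module under the motivic Hirzebruch class transformation $T_{y*}\colon K_0(\mathrm{MHM}(X))\to H_*^{BM}(X)\otimes\Q[y,y^{-1}]$, whose defining normalization is that on a smooth variety $Y$ it specializes at $y=1$ to the Thom--Hirzebruch class, $T_{1*}(Y)=L^*(TY)\cap[Y]$. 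On the topological side, $L_*(X)$ is the image of the self-dual complex $\mathrm{IC}_X$ under the Cappell--Shaneson $L$-class transformation $L_*\colon\Omega(X)\to H_*(X;\Q)$ defined on the bordism group $\Omega(X)$ of (Witt, self-dual) complexes. Because the underlying $\Q$-perverse sheaf of a polarized pure Hodge module of weight $\dim X$ is self-dual, passing to underlying complexes gives a map from such Hodge modules to $\Omega(X)$, and \eqref{conji} is the case $\mathcal{M}=\mathrm{IC}^H_X$ of the expected identity ``$L_*$ of the underlying self-dual complex of $\mathcal{M}$ equals $T_{1*}([\mathcal{M}])$'' on self-dual pure Hodge modules. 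The first step is to set up this comparison precisely and to observe that its degree-$0$ part is exactly the Hodge index theorem $\sigma(X)=I\chi_1(X)$ from \cite{MSS0,FPS}, so the new content lies in $H_i(X;\Q)$ with $i>0$.

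Second, I would reduce to a base case by resolution and the decomposition theorem. For a resolution $f\colon\widetilde X\to X$, Saito's decomposition theorem yields a splitting $f_*\,\Q^H_{\widetilde X}[\dim X]\cong\mathrm{IC}^H_X\oplus\bigoplus_j\mathcal{M}_j$, where each $\mathcal{M}_j$ is a shifted simple pure Hodge module $\mathrm{IC}^H_{\overline{Z_j}}(\mathcal{V}_j)$ supported on a \emph{proper} closed subvariety $\overline{Z_j}\subsetneq X$, with $\mathcal{V}_j$ a polarizable variation of Hodge structure on a dense open subset of $Z_j$; the Cappell--Shaneson decomposition of $L_*$ for $Rf_*\Q_{\widetilde X}[\dim X]$ has the same shape. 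Since $\widetilde X$ is smooth, $L_*(\widetilde X)=T_{1*}(\widetilde X)$ by the normalization above, and its twisted refinement --- an Atiyah--Meyer / twisted Hirzebruch--Riemann--Roch statement for smooth varieties carrying a polarizable VHS --- supplies the base case in the coefficient setting. Applying $f_*$, using the $f_*$-functoriality of $T_{y*}$ and of $L_*$, and inducting on $\dim X$ to absorb the summands $\mathcal{M}_j$, one peels off \eqref{conji}; this forces the induction into the \emph{twisted} category, where one must establish $L_*(\overline{Z},\mathcal{V})=T_{1*}([\mathrm{IC}^H_{\overline{Z}}(\mathcal{V})])$ for all pairs with $\dim\overline{Z}<\dim X$, the left-hand side being the Cappell--Shaneson $L$-class with twisted coefficients in the self-dual local system underlying $\mathcal{V}$.

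The hard part, and the reason Conjecture~\ref{BSYi} remains open in general, is the inductive step through a \emph{singular} stratum. The Cappell--Shaneson formula writes $L_*(\overline{Z},\mathcal{V})$ as a sum over the strata $S$ of a Whitney stratification of pushforwards of twisted $L$-classes of $\overline S$, each weighted by the signature of a Poincar\'e-type pairing on the hypercohomology of the link of $S$ with coefficients in the relevant self-dual complex; the Hirzebruch side produces the same sum with each link signature replaced by the value at $y=1$ of a Hodge-theoretic Euler characteristic of the corresponding mixed Hodge module on the link. Matching these two families of \emph{local} invariants --- a ``local Hodge index theorem on links with coefficients'' that is moreover compatible with the gluing data of the stratification --- is the crux, and no general argument for it is known; it does not follow formally from the global statement because the links are not compact Hodge-theoretic objects of the expected parity.

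For the cases treated in this paper the plan is instead to make this recursion terminate, or its problematic terms vanish, by exploiting extra structure. For compact toric varieties one uses the torus-orbit stratification, along which $\mathrm{IC}$ is equivariant and hence has trivial monodromy (reducing further, through a simplicial resolution, to $\Q$-homology-manifold toric varieties); for (matroid) Schubert, Richardson, intersection, and projective simply connected spherical varieties one uses cell/orbit stratifications with simply connected, often affine-space, strata, which again forces every $\mathcal{V}_j$ to be constant, so the twisted classes collapse to honest $L$-classes of stratum closures and the residual weights are explicit combinatorial/Hodge-theoretic quantities (for toric varieties, the local intersection cohomology governed by the $g$-polynomial) whose value at $y=1$ is matched directly. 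When $X$ is a $\Q$-homology manifold one has $\mathrm{IC}_X\simeq\Q_X[\dim X]$ and the comparison becomes $T_{1*}(X)=L_*(X)$, for which the normalization property and self-duality suffice. Finally, for compact complex algebraic surfaces and threefolds the singular locus has dimension $\le 1$, respectively $\le 2$, so after one resolution step the summands are supported on subvarieties of dimension $\le 2$, where $L_*$ and $IT_{1*}$ have few homological components and the remaining twisted comparisons are settled directly, using $\sigma=I\chi_1$ in degree $0$ and explicit identifications in the other degrees.
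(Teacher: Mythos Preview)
Your sketch correctly frames the problem as a comparison of two natural transformations and correctly identifies the main obstruction; you also rightly note that the conjecture is open in general. But the single most important technical ingredient is missing from your outline. You write that ``passing to underlying complexes gives a map from such Hodge modules to $\Omega(X)$'', as if this were routine. It is not: the class in $\Omega_\bR(X)$ depends a priori on the choice of polarization, and the fact that it does not, together with the functoriality of the resulting transformation $\Pol\colon K_0(\MHM(X))\to\Omega_\bR(X)$ under proper pushforward, is the deep input from \cite{FPS} (relying on Saito's relative hard Lefschetz and Hodge--Riemann relations). The paper's entire strategy is organized around $\Pol$: once one knows $L_*\circ\sd_\bR=T_{1*}$ and $\sd_\bR=\Pol\circ\chi_{Hdg}$, the conjecture for $X$ reduces to showing that $[IC'^H_X]$ lies in the $K_0(\MHM(pt))$-submodule generated by $\im(\chi_{Hdg})$, using that $L_*\circ\Pol$ and $MHT_{1*}$ are both $K_0(\MHM(pt))$-linear (the paper checks this in Lemma~\ref{modcomp}). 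Your decomposition-theorem induction, by contrast, is forced into the twisted singular case, which you yourself identify as the unresolved crux; the paper sidesteps this entirely.

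For the special cases your description is close in spirit but differs in execution. The paper does not induct on stratum \emph{closures} via Cappell--Shaneson link signatures; it writes $[IC'^H_X]$ as a $K_0(\MHM(pt))$-combination of the classes $[(j_S)_!\bQ_S^H]=\chi_{Hdg}([S\hookrightarrow X])$ for the \emph{open} strata (Proposition~\ref{ccs}, ``virtually stratum-wise constant''), which lands directly in the good submodule without any induction and without ever matching local link invariants. For surfaces and threefolds the paper's route is again different from yours: surfaces go via normalization to the isolated-singularity case (Example~\ref{ex16}), and threefolds via normalization to a one-dimensional singular locus, where the top-degree coincidence (Theorem~\ref{t410}) plus the degree-zero Hodge index theorem suffice (Corollary~\ref{cor-1dim}); no resolution or decomposition-theorem bookkeeping is needed.
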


In fact, the conjecture seems to be attributed to Cappell and Shaneson (see \cite[Remark 5.4]{BSY} and the references therein), so it predates the construction of Hirzebruch classes of \cite{BSY}. Only the case for rational homology manifolds is explicitly conjectured in \cite{BSY}. 
The conjecture holds in the smooth context by work of Hirzebruch on the generalized Hirzebruch-Riemann-Roch theorem \cite{H}, and it was recently proved in the case of rational homology manifolds in \cite{FPS}, see also \cite{FP}. Several special cases have also been checked in \cite{CMSS} for certain complex hypersurfaces with isolated singularities, in \cite{CMSS2} for global quotients of projective varieties by finite groups, in \cite{MS1} for projective simplicial toric varieties, in \cite{BSW} for Schubert varieties in a Grassmannian; see Section \ref{known} for a more detailed reference list.\\

The purpose of this paper is to explain the motivation for Conjecture \ref{BSYi}, survey its status and recent developments, but to also present some new cases where the conjecture holds, e.g., for all compact toric varieties, all (matroid) Schubert varieties, all Richardson and intersection varieties,
all projective simply connected spherical varieties, and all compact complex algebraic surfaces and threefolds. 
Although some of these cases are perhaps known to experts, we could not find them explicitly stated in the existing literature.

\medskip

Conjecture \ref{BSYi} can already be explained via the following diagram of transformations for $X$ a compact complex algebraic variety (here the compactness is only needed for the $L$-class transformations): 

\begin{equation}\label{diag1}
    \begin{tikzcd}
 K_0(Var/X)   \arrow{r}{\chi_{Hdg}} \arrow[swap]{rdd}{\sd_\bR^{pair}} \arrow{dr}{\sd_\bR } \arrow[bend left=30,rrrr,"T_{y*}"] &  K_{0}(MHM(X)) \arrow{rrr}{MHT_{y*}}[swap]{\boxed{?}}\arrow{d}{\Pol} & &
& H_*(X)\otimes \bQ[y^{\pm 1}] \arrow{d}{y=1}  \\
     & \Omega_\bR(X) \arrow{rrr}{L_*} \arrow{d}{\tau} & & &
     H_{*}(X)\otimes\bQ  \\
     & \Omega^{pair}_\bR(X) \arrow[swap]{rrru}{L^{pair}_*}
     \end{tikzcd}
\end{equation}
with $H_*(X)$ denoting the even-degree Borel-Moore
homology $H^{BM}_{2*}(X;\bZ)$ of the complex algebraic variety $X$.
For $X$ compact this agrees with the usual even degree homology 
$H_{2*}(X;\bZ)$.\\

These transformations and their construction will be explained in Section \ref{sec:sing} below. Let us just say here that all possible transformations  $K_0(Var/X)  \to H_{*}(X)\otimes\bQ $ in the diagram  are the same, and all subdiagrams are known to commute, except the square denoted by $\boxed{?}$. 
For instance, the top part of the diagram commutes by \cite{BSY}, i.e., \[
MHT_{y*} \circ \chi_{Hdg}=T_{y*}.
\]
Moreover, by \cite{BSY},
\[
T_{1*}=L_* \circ \sd_{\bR},
\]
which uses the {\it weak factorization theorem}  behind Bittner's  {\it blow-up relation}
for $K_0(Var/X)$. Here $L_*$ is a functorial $L$-class transformation, so that $L_*([IC_X])=L_*(X)$ is the Goresky-MacPherson $L$-class of $X$ \cite{GM}. In \cite{BSY},  an $L$-class transformation of Cappell-Shaneson \cite{CS} was used, but its construction needs to be corrected by using a {\it shifted version} as indicated later on in Section \ref{Lclass}. This will give a functorial Cappell-Shaneson $L$-class transformation 
  $L_*$  on the cobordism group $ \Omega_\bR(X)$ of {\it self-dual} sheaf complexes, which fits 
with the  Goresky-MacPherson $L$-class. Furthermore, in Theorem \ref{comparison-smooth} we will see that the 
square denoted by $\boxed{?}$ is {\it commutative} for any polarizable variation of pure Hodge structures on a smooth variety $X$ (corresponding to a shifted pure smooth Hodge module.)

Finally, the transformation $\Pol$ was only recently defined in \cite{FPS} as a functorial transformation $$\Pol^{pair}:K_0(\MHM(X)) \to \Omega^{pair}_\bR(X)$$ such that
\[
\sd_\bR^{pair}= \Pol^{pair} \circ \chi_{Hdg}, 
\]
with $\Pol^{pair}:=\tau \circ \Pol$ for $\tau:\Omega_\bR(X)\to \Omega^{pair}_\bR(X) $ a canonical quotient transformation.
However, as explained later on, the construction of $\Pol$ can be lifted to $\Omega_\bR(X)$ such that
\[
\sd_\bR=\Pol \circ \chi_{Hdg}.
\]
Another possible functorial $L$-class transformation  $L^{pair}_*$
defined for a cobordism group of self-dual constructible sheaf complexes in terms of {\it pairings} was recently introduced in \cite{FPS2}.
As we will show, both of these $L$-class transformations are compatible, in the sense that
$$L^{pair}_*\circ \tau = L_*\:.$$

By construction, one has for $X$ pure-dimensional
\[\Pol(IC'^H_X)=[IC_X] \in \Omega_\bR(X),\]
where $IC_X^H$ is the intersection cohomology Hodge module and \[ IC'^H_X:=IC_X^H[-\dim(X)].\]
Here, $IC_X$ is endowed with the canonical self-duality pairing. 
Conjecture \ref{BSYi} amounts to showing that the two ways of mapping $[IC'^H_X]$ around the middle square $\boxed{?}$ in the diagram coincide, i.e.,
\[(L_* \circ \Pol)([IC'^H_X])= MHT_{1*}([IC'^H_X]).\]

\begin{remark}\label{r11}
If $[M] \in \im(\chi_{Hdg}) \subset K_0(\MHM(X))$, then the commutativity of the relevant parts of diagram \eqref{diag1} as explained above, yield that 
\[ (L_* \circ \Pol)([M])= MHT_{1*}([M]).\]
This applies in particular to the constant Hodge complex $[\bQ^H_X]=\chi_{Hdg}([id_X])$. This also implies that Conjecture \ref{BSYi} will hold whenever $[IC_X'^H]\in \im(\chi_{Hdg})$, as in the following examples.
\end{remark}

\begin{example}\label{ex12}
    If $X$ is a rational homology manifold, then $\chi_{Hdg}([id_X:X \to X])=[\bQ_X^H]=[IC'^H_X] $, which puts the main results of \cite{FP,FPS} into the framework of this paper.
\end{example}

\begin{example}\label{ex13}
    If $f:Y \to X$ is a small resolution of singularities, then $Rf_*\bQ_Y=IC'_X$, hence $[IC'^H_X]\in \im(\chi_{Hdg}),$ since $\chi_{Hdg}([f:Y \to X]):=[f_!\bQ_Y^H]=[f_*\bQ_Y^H]=[IC'^H_X].$
\end{example}

\begin{example}\label{ex14} 
 Recall that if $f:Y\to X$ is the normalization of $X$, then $Rf_*IC'_Y=f_*IC'_Y=IC'_X$. If, moreover, $X=\cup_i X_i$ is a simple normal crossing divisor in a smooth variety, then its normalization, $Y=\sqcup_i X_i$, is the disjoint union of its smooth irreducible components, hence $IC'_Y=\oplus_i \bQ_{X_i}$. This implies $IC_X'^H \in \im(\chi_{Hdg})$.
\end{example}

\begin{remark}\label{r14}
    More generally, if $f:Y\to X$ is a proper homologically small map of degree one in the sense of Goresky-MacPherson \cite[Section 6.2]{GM2}, then $Rf_*IC'_Y=IC'_X$, hence $[IC'^H_X]\in \im(\chi_{Hdg})$ if $[IC'^H_Y]\in \im(\chi_{Hdg})$, e.g., if $Y$ is a rational homology manifold.
\end{remark}

\begin{remark}\label{rem-submodule}
Conjecture \ref{BSYi} will hold whenever $[IC_X'^H]$ belongs to 
the $K_0(\MHM(pt))$-sub\-module generated by $\im(\chi_{Hdg}) \subset K_0(\MHM(X))$, see Corollary \ref{submodule}.
\end{remark}

\begin{example}[Secant varieties of a smooth projective curve]
    Let $C$ be a smooth projective curve in $\bC P^N$, embedded  by a line bundle which separates $2k$ points. Consider the secant variety $Sec^k(C)$ of $k$-planes through the
curve $C$, the $k$-th secant bundle $B^k(C)$ of $C$, and the natural map $\beta_k:B^k(C) \to Sec^k(C)$. Then $B^1(C)$ is isomorphic to $C$ and the map $\beta_1$ is an isomorphism. Moreover, for $k \geq 2$, $Sec^k(C)$ is a proper subvariety of $\bC P^N$, which is singular along $Sec^{k-1}(C)$, so that $\beta_k$ is a semi-small resolution of singularities. 
Using the fact that the fibers of $\beta_k$ (which are symmetric products of $C$) are irreducible, the decomposition theorem for such semi-small maps yields that 
\[ (\beta_k)_*\bQ^H_{B^k(C)}[2k-1] \simeq \bigoplus_{i=1}^k IC^H_{Sec^i(C)}(i-k),\]
e.g., see \cite[Section 2.4]{Br} and the references therein for more details. It thus follows by induction that  $IC_{Sec^k(C)}'^H$ belong to the $K_0(\MHM(pt))$-sub\-module generated by $\im(\chi_{Hdg})\subset K_0(\MHM(Sec^k(C)))$, hence Conjecture \ref{BSYi} holds for such secant varieties embedded into a projective space via a
suﬃciently positive line bundle. Let us finally note that if $C$ is a rational normal curve, then in fact each secant variety $Sec^k(C)$ is a rational homology manifold. (We thank Bradley Dirks, Sebastian Olano and Debaditya Raychaudhury for bringing the paper \cite{Br} to our attention.)
\end{example}

\begin{example}\label{ex16}
    Assume the pure-dimensional compact complex algebraic variety $Y$ has only ($\bQ$-homologically) isolated singularities. Then, by the arguments of Example \ref{iso}, $[IC_Y'^H]$ 
    belongs to the $K_0(\MHM(pt))$-submodule generated by $\im(\chi_{Hdg}) \subset K_0(\MHM(X))$.
     In particular, if $X$ is a compact complex algebraic surface, its normalization $Y$ has at most isolated singularities, hence it follows by Remark \ref{r14} that $[IC_X'^H]$
     belongs to the $K_0(\MHM(pt))$-submodule generated by $\im(\chi_{Hdg}) \subset K_0(\MHM(X))$.
     In particular, by Remark \ref{rem-submodule}, Conjecture \ref{BSYi} holds for any compact complex algebraic surface.
\end{example}

\begin{remark}\label{rem-3dim}
By a different type of argument, it can be shown that 
Conjecture \ref{BSYi} also holds for any compact complex algebraic variety $X$ of pure dimension, whose singular locus $X_{sing}$ (or non-rational homology locus $\Sigma_X$) is one-dimensional (see Corollary \ref{cor-1dim}).
In particular, the conjecture holds (via normalization) for any compact complex algebraic variety $X$ of pure dimension three
(see Example \ref{ex-1dim}).
\end{remark}

\begin{remark}\label{problem}
It is a very difficult problem, first raised in \cite{S2}, to decide whether for any pure-dimensional complex algebraic variety $X$ the class
 $[IC_X'^H]$ belongs to $\im(\chi_{Hdg})$ or to the $K_0(\MHM(pt))$-submodule generated by $\im(\chi_{Hdg}) \subset K_0(\MHM(X))$ (see also the discussion
 in \cite[Section 10]{FPS2}), so that  Conjecture \ref{BSYi} would be solved by our results. One of the aims of this paper is to show that this indeed is the case for many interesting examples.
\end{remark}

The paper is organized as follows. In Section \ref{charsm} we review the relevant background on characteristic classes in the smooth setting. Section \ref{sec:sing} is devoted to characteristic classes in the singular context. In Subsection \ref{ccc} we introduce the (motivic and, resp., Hodge-theoretic) Hirzebruch classes of \cite{BSY} for singular varieties. In Subsection \ref{cobg} we recall two notions of a cobordism group of self-dual constructible complexes and their relation to mixed Hodge modules (as recently described in \cite{FPS}). In particular we explain the definition of the new transformation $\Pol$ from \cite{FPS}, whose very deep and difficult functoriality (recalled in Theorem \ref{thpol}) underlies almost all results of this paper.
In Section \ref{Lclass} we explain our new  definition of the functorial $L$-class transformation $L_*$, and clarify its  relationship with another notion of topological $L$-class transformation $L_*^{pair}$ introduced in \cite{FPS2}. Subsection \ref{known} collects known positive results about Conjecture \ref{BSY}.
Finally, in Section \ref{secnew} we describe new situations that guarantee that $[IC_X'^H]$ belongs to 
the $K_0(\MHM(pt))$-sub\-module generated by $\im(\chi_{Hdg}) \subset K_0(\MHM(X))$, which (in view of the results of \cite{FPS}) proves Conjecture \ref{BSYi} in these cases. Moreover, in
Subsection \ref{subsec-AM} we show the property $L_*\circ \Pol(-)= MHT_{1*}(-)$
for any polarizable variation of pure Hodge structures on  a smooth variety $X$
(corresponding to a shifted pure smooth Hodge module).

\medskip

{\bf Acknowledgments.} L. Maxim acknowledges support from the Simons Foundation, and from the project ``Singularities and
Applications'' - CF 132/31.07.2023 funded by the European Union - NextGenerationEU
- through Romania's National Recovery and Resilience Plan. J. Sch\"urmann was funded
by the Deutsche
Forschungsgemeinschaft (DFG, German Research Foundation) Project-ID 427320536 – SFB
1442, as well as under Germany's Excellence Strategy EXC 2044 390685587, Mathematics
M\"unster: Dynamics – Geometry – Structure.

\section{Characteristic classes of smooth  complex varieties}\label{charsm}
In this section, we recall the definition of the Hirzebruch class of a smooth
complex algebraic variety, and explain its relation to the $L$-class.

If $X$ is a smooth
complex algebraic variety, the signature (for $X$ also compact) and the
$L$-classes of $X$ are special cases of more general Hodge theoretic
invariants encoded by the Hirzebruch characteristic class (also
called ``the generalized Todd class") $$T_y^*(X):=T_y^*(T_X)$$ of the (holomorphic) tangent
bundle of $X$ (cf. \cite{H}). This is defined by the normalized
power series
\begin{equation}
Q_y(\alpha)=\frac{\alpha(1+y)}{1-e^{-\alpha(1+y)}}-\alpha y \in
\Q[y][[\alpha]],
\end{equation}
that is, \begin{equation}\label{Hsm}
T_y^*(T_X)=\prod_{i=1}^{\dim(X)}Q_y(\alpha_i),
\end{equation}
where $\{\alpha_i\}$ are the Chern roots of the tangent bundle
$T_X$. Note that $Q_{y}(\alpha)$ satisfies 
\begin{displaymath}
Q_{y}(\alpha) = 
\begin{cases}
\:1+\alpha &\text{for $y=-1$,}\\
\:\frac{\alpha}{1-e^{-\alpha}} &\text{for $y=0$,}\\
\:\frac{\alpha}{\tanh \alpha} &\text{for $y=1$.}
\end{cases} 
\end{displaymath}
Therefore, the Hirzebruch
class $T_y^*(T_X)$ coincides with the total Chern class $c^*(T_X)$
if $y=-1$, with the total Todd class $\td^*(T_X)$ if $y=0$, and 
with the total Thom-Hirzebruch $L$-class $L^*(T_X)$ if $y=1$. As usual, cohomology classes of $X$ correspond to those defined via its tangent bundle, e.g., $L^*(X)=L^*(T_X)$.\\

For $X$ compact, the Hirzebruch class appears in the generalized
Hirzebruch-Riemann-Roch theorem (cf. \cite{H}, \S 21.3), which
in particular implies that 
the Hirzebruch genus
\begin{equation}\label{Hichi}
\chi_y(X):=\chi_y(X,\mathcal{O}_X):=\sum_{p \geq 0} \chi(X,\Omega^p_X) \cdot y^p=\sum_{p \geq 0} \left( \sum_{i \geq 0} (-1)^i \dim H^i(X,\Omega^p_X) \right) \cdot y^p
\end{equation} is computed by
\begin{equation}\label{Hchi}\chi_y(X)=\langle {T}_y^*(T_X) , [X] \rangle,\end{equation}
with $\langle -, - \rangle$ the usual Kronecker pairing.
Hence, for the special values $y=-1, 0, 1$, formula \eqref{Hchi} yields the identifications
 \begin{equation}\label{chi-1}\chi_{-1}(X)=\langle c^*(T_X) , [X] \rangle,\end{equation}
 \begin{equation}\label{chi0}\chi_0(X)=\langle \td^*(T_X) , [X] \rangle,\end{equation}
 \begin{equation}\label{chi1}\chi_1(X)=\langle L^*(T_X) , [X] \rangle.\end{equation}
In view of the Gauss–Bonnet–Chern Theorem (which expresses the Euler characteristic $\chi(X)$ as the degree of the total Chern class of $T_X$), the Riemann-Roch Theorem (expressing the arithmetic genus $\chi_a(X)$ as the degree of the total Todd class), and, resp., the Hirzebruch Signature Theorem (expressing the signature $\sigma(X)$ as the degree of the total $L$-class), one then has that
 \begin{equation}\label{sp}
 \chi(X)=\chi_{-1}(X), \ \ \chi_a(X)=\chi_0(X), \ \ \sigma(X)=\chi_1(X).
 \end{equation}

Note that if $h^{p,q}(X):=\dim H^q(X, \Omega^p_X)$ are the Hodge numbers of the smooth compact complex algebraic variety $X$, then 
it follows from \eqref{Hichi} that
\begin{equation}\label{chiy}\chi_y(X)=\sum_{p,q} (-1)^q h^{p,q}(X) \cdot y^p.
\end{equation}
The identification $\sigma=\chi_1$ is usually referred as the (classical) ``Hodge index theorem'', see \cite[Theorem 15.8.2]{H} for the K\"ahler case
(based on the hard Lefschetz theorem and the corresponding Hodge-Riemann bilinear relations),
and the discussion in the bibliographical note of \cite[Section 23.1]{H} for the compact complex manifold case (based on the Atiyah-Singer index theorem).


\section{Characteristic classes of singular complex algebraic varieties}\label{sec:sing}
The above-mentioned cohomology characteristic classes of complex algebraic manifolds have been extended to the singular setting via suitable natural transformations valued in homology, see \cite{BFM, BSY, CS, GM, MP}. Since here we are only interested in the relation between Hirzebruch classes and $L$-classes, we focus on these.

\subsection{Hirzebruch classes}\label{ccc}
Let $X$ be a reduced complex algebraic variety, and we denote by $H_*(X)$  the even-degree Borel-Moore homology $H_{2*}^{BM}(X;\bZ)$.

Let $K_{0}(var/X)$ be the relative Grothendieck group of algebraic
varieties over $X$, i.e., the quotient of the free abelian group of isomorphism
classes of algebraic morphisms $Y\to X$ to $X$, modulo the additivity
relation generated by  the ``scissor relation''
\begin{equation} \label{eq:add}
[Y\to X] = [Z\hookrightarrow Y \to X] + [Y\backslash Z \hookrightarrow Y \to X] 
\end{equation}
for $Z\subset Y$ a closed algebraic subset of $Y$.
By resolution of singularities, $K_{0}(var/X)$ is generated by
classes $[Y\to X]$ with $Y$ smooth pure dimensional and proper
(or projective) over $X$. There is a notion of pushforward for any $f:X'\to X$ defined by 
\[f_{!}([h:Z\to X'])=[f\circ h: Z\to X].\]
In \cite[Theorem 2.1]{BSY}, it was shown that there exists a unique group homomorphism 
\[T_{y*}: K_{0}(var/X)\to H_{*}(X)\otimes\bQ[y] \:,\]
commuting with pushforward
for proper maps, and
satisfying the normalization condition
\[T_{y*}([id_{X}])=T^*_y(T_X) \cap [X]\]
for $X$ smooth and pure-dimensional.

The transformation $T_{y*}$ has a Hodge-theoretic incarnation $MHT_{y*}$ defined as follows. 
For a complex algebraic variety $X$, we let $\MHM(X)$ denote the abelian category of algebraic mixed Hodge modules on $X$, cf. \cite{Saito90}. First, we introduce a K-theoretic version of the Chern class transformation, namely
\[
 \DR_y: K_0(\mhm(X)) \to  K_0(X)[y,y^{-1}],
\]
where $K_0(X)=K_0(Coh(X))$ is the Grothendieck group of coherent sheaves
of $\cO_X$-modules on $X$. 
If $X$ is smooth and $M\in \mhm(X)$, let $(\cM,F_{\bullet}\cM)$ be the underlying filtered $\sD_X$-module of $M$, and set
\[
\begin{split}
\DR_y[M] &\colonequals \sum_{p}  \left[\gr^F_{-p}\DR(\cM)\right] \cdot(-y)^p  \\
&= \sum_{p,i} (-1)^i \left[\cH^i \gr^F_{-p}\DR(\cM)\right] \cdot(-y)^p \in K_0(X)[y,y^{-1}],
\end{split}
\]
with $\gr^F_{-p}\DR(\cM)$ the graded parts of the de Rham complex of $\cM$ with respect to the induced Hodge filtration. Note that only finitely many cohomology groups $\cH^i \gr^F_{-p}\DR(\cM)$ are non-zero.
The definition of $\DR_y$ extends to the case when $X$ is singular by using locally defined closed embeddings into smooth varieties, since the graded quotient cohomology sheaves $\cH^i\gr^F_{p}\DR(\cM)$ are independent of local embeddings and are $\cO_X$-modules. Furthermore, it can also be extended to complexes $M^\bullet \in D^b\mhm(X)$ by applying it to each cohomology module $H^iM^\bullet\in \mhm(X)$, $i \in \Z$, and taking the alternating sum.

\begin{definition}\cite{BSY,S}
The Hodge-theoretic {\it Hirzebruch class transformation}  is defined by the composition
\begin{align*}
MHT_{y\ast}: K_0(\mhm(X)) &\to H_*(X) \otimes \bQ[y^{\pm 1}, (1+y)^{-1}] \\
[M^\bullet] &\mapsto \td_{(1+y)\ast}  \circ \DR_y[M^\bullet],
\end{align*}
where $\td_{(1+y)\ast}$
is the scalar extension of the
Baum--Fulton--MacPherson Todd class transformation $\td_*:K_0(X) \to H_\ast(X) \otimes \bQ$ \cite{BFM}, together with the multiplication by $(1+y)^{-k}$ on $H_k(X)$.
\end{definition}

\begin{remark}
In fact, by \cite[Proposition 5.21]{S}, one has that $$MHT_{y\ast}([M^\bullet])\in H_*(X) \otimes \bQ[y^{\pm 1}],$$ that is, no negative powers of $(1+y)$ appear in the above definition.
\end{remark}

\begin{remark}\label{rem23}
Over a point space, the transformation $ MHT_{y\ast}$ coincides with the Hodge polynomial ring homomorphism $\chi_y:K_0(\MHS^p) \to \bZ[y^{\pm 1}]$ defined on the Grothendieck group of (graded) polarizable mixed Hodge structures by
$$\chi_y([H]):=\sum_p \dim \gr^p_F(H \otimes \bC) \cdot (-y)^p,$$
for $F$ the (decreasing) Hodge filtration of $H \in \MHS^p$. Here we use the well-known equivalence of categories $\MHM(pt) \simeq \MHS^p$.
\end{remark}

The transformations $T_{y*}$ and $MHT_{y*}$ are related to each other via the natural group homomorphism
\[ \chi_{Hdg}:K_0(var/X) \to K_0(\mhm(X)), \ \ [f:Y \to X] \mapsto [f_!\bQ^H_Y] ,\]
with 
$\bQ^H_Y$ denoting the constant mixed Hodge complex on $Y$. Then, as explained in \cite[Section 4]{BSY}, one has that
\begin{equation}\label{compTy}
T_{y*}=MHT_{y*} \circ \chi_{Hdg}.
\end{equation}
Note that $\chi_{Hdg}([id_X:X \to X])=[\bQ^H_X].$

\begin{definition}
The {\it motivic Hirzebruch class} ${T_y}_*(X)$ of a complex algebraic
variety $X$ is defined by 
\begin{equation}
T_{y*}(X):=T_{y*}([id_X])=MHT_{y*}([\bQ_X^H]) \in H_{*}(X)\otimes\bQ[y].
\end{equation}
Similarly, for a pure-dimensional complex algebraic variety $X$, we define
\begin{equation}
IT_{y*}(X):=MHT_{y*}([IC'^H_X]) \in H_*(X) \otimes \bQ[y^{\pm 1}],
\end{equation}
with $IC'^H_X:=IC_X^H[-\dim(X)]$. This is usually referred to the as the {\it intersection cohomology Hirzebruch class} of $X$, though it is still valued in homology.
\end{definition}

For a compact complex algebraic variety $X$ with $a_X:X \to pt$ the
constant map to a point, by using Remark \ref{rem23} it follows that the pushforward $a_{X*}T_{y*}(X)$ is the {\it Hodge polynomial}
\begin{equation}\label{defchi}\chi_y(X):=\chi_y([H^*(X;\bQ)])=\sum_{i,p}
(-1)^i \dim_{\bC} \Gr^p_F H^i(X;\bC) \cdot (-y)^p,
\end{equation}
and similarly, the pushforward $a_{X*}IT_{y*}(X)$ is the {\it intersection cohomology Hodge polynomial}
\begin{equation}\label{defichi}I\chi_y(X):=\chi_y([IH^*(X;\bQ)])=\sum_{i,p}
(-1)^i \dim_{\bC} \Gr^p_F IH^i(X;\bC) \cdot (-y)^p
\end{equation}
$$=\sum_{p,q}
(-1)^q Ih^{p,q}(X) \cdot y^p,$$
where the last equality uses the purity of the Hodge structure on the intersection cohomology groups $$IH^*(X;\bQ)=H^*(X;IC'^H_X),$$ and 
$$Ih^{p,q}(X):=\dim_{\bC} \Gr^p_F IH^{p+q}(X;\bC)$$
are the corresponding intersection cohomology Hodge numbers. Both $\chi_y(X)$ and $I\chi_y(X)$ are natural generalizations to the singular context of the Hirzebruch $\chi_y$-genus of \eqref{chiy} from the smooth compact case.\\

A Hodge index theorem for singular varieties has been obtained in \cite{MSS0} in the projective case, and this was extended to all compact complex algebraic varieties of pure dimension in \cite[formula (6)]{FPS}. It can be formulated as the equality
\begin{equation}\label{ihif}
\sigma(X)=I\chi_1(X),
\end{equation}
where the signature $\sigma(X)$ of $X$ is the Gorresky-MacPherson signature \cite{GM} defined via the Poincar\'e duality pairing on the middle intersection cohomology group of $X$.

In Section \ref{Lclass} below we recall the definition of the homology $L$-classes for singular compact complex pure-dimensional varieties, which at the degree level yields the intersection cohomology signature. Given that $I\chi_1(X)$ is the degree of the class $IT_{1*}(X)$, it is then natural to expect that the following holds, see \cite[Remark 5.4]{BSY}.
\begin{conjecture}[Characteristic class version of Hodge index theorem] \label{BSY}
Let $X$ be a compact pure-dimensional complex algebraic variety. Then
\begin{equation}\label{conj}
L_*(X)=IT_{1*}(X) \in H_{2*}(X;\bQ).
\end{equation}
\end{conjecture}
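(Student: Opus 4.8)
\textbf{Proof strategy for Conjecture \ref{BSY}.}
This statement is the central conjecture of the paper and is not expected to be proved in full generality; the realistic plan is to establish it in the many special cases the authors highlight by reducing, in each case, to the already-known commutativity of diagram \eqref{diag1}. The organizing principle is Remark \ref{r11} together with its module-theoretic refinement in Remark \ref{rem-submodule}: once one knows that $[IC_X'^H]$ lies in $\im(\chi_{Hdg})$, or merely in the $K_0(\MHM(pt))$-submodule it generates, the two composites around the square $\boxed{?}$ agree on that class because every other subdiagram of \eqref{diag1} is known to commute (the top triangle by \cite{BSY}, the relation $T_{1*}=L_*\circ\sd_\bR$ by the weak factorization theorem, and $\sd_\bR=\Pol\circ\chi_{Hdg}$ by \cite{FPS}). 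So the first step is to isolate the clean implication ``$[IC_X'^H]\in K_0(\MHM(pt))\cdot\im(\chi_{Hdg})\ \Rightarrow$ \eqref{conj}'' as a standalone corollary (Corollary \ref{submodule}), using that $MHT_{1*}$ and $L_*\circ\Pol$ are both $K_0(\MHM(pt))$-linear and agree on $\im(\chi_{Hdg})$.

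The second step is to feed in the geometric input that populates this submodule in each target class of examples. For rational homology manifolds $[IC_X'^H]=[\bQ_X^H]=\chi_{Hdg}([\mathrm{id}_X])$ directly (Example \ref{ex12}), which recovers \cite{FP,FPS}. For varieties admitting a small resolution, or more generally a proper homologically small degree-one map from a rational homology manifold, $Rf_*IC'_Y=IC'_X$ gives membership in $\im(\chi_{Hdg})$ (Examples \ref{ex13}, \ref{ex14}, Remark \ref{r14}). For varieties with a semismall resolution whose fibers contribute only the expected IC summands — secant varieties, toric varieties, (matroid) Schubert varieties, Richardson and intersection varieties, simply connected spherical varieties — the decomposition theorem expresses $(\beta)_*\bQ^H[\dim]$ as a direct sum of shifted, Tate-twisted $IC^H$'s of strata, and one peels these off inductively to land in the $K_0(\MHM(pt))$-submodule. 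For surfaces one passes to the normalization (isolated singularities, handled via the local calculation of Example \ref{iso}) and uses Remark \ref{r14}; for threefolds one uses the one-dimensional singular locus argument of Corollary \ref{cor-1dim}. Each of these is a bookkeeping exercise once the decomposition theorem output is in hand.

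The genuine obstacle — the step that cannot be made routine — is the one-dimensional singular locus case behind Remark \ref{rem-3dim}, i.e.\ showing $[IC_X'^H]$ lies in the relevant submodule whenever $\dim X_{sing}\le 1$. Here there is no small or semismall resolution to invoke and no pointwise reduction; instead one must analyze the weight/Hodge structure of the IC sheaf supported along a curve, compare $IC_X'^H$ with $j_{!*}$ of a variation of Hodge structure on the smooth part and with $\bQ_X^H$ up to classes supported on the curve and finitely many points, and show that the discrepancy is a $\Z[y^{\pm1}]$-combination of $\chi_{Hdg}$-images (of normalizations of the curve, of the ambient smooth locus, etc.). This requires the full strength of the functoriality of $\Pol$ in Theorem \ref{thpol} — in particular its compatibility with the specialization and the self-duality structures along a stratification — and is where the real work of Section \ref{secnew} goes. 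Everything else is assembling known commutativities; this case is the load-bearing new argument.
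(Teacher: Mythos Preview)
Your high-level framing is right: the conjecture is not proved in general, and the organizing principle is indeed Corollary \ref{submodule}. But two of your proposed mechanisms do not match what the paper actually does, and in one case your description is substantively wrong.

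First, for toric varieties, (matroid) Schubert varieties, spherical varieties, and Richardson/intersection varieties, the paper does \emph{not} proceed via semismall resolutions and the decomposition theorem. The route is the ``virtually stratum-wise constant'' machinery of Proposition \ref{ccs} and Theorem \ref{cohVconstant}: one stratifies $X$ so that the cohomology sheaves of $IC_X$ are constant along strata (automatic when strata are simply connected, or when a linear algebraic group acts with finitely many orbits and connected stabilizers, as in Corollary \ref{tsg}), and then rigidity of variations forces $[IC'^H_X]=\sum_S [(j_S)_!\bQ_S^H]\cdot i_s^*[IC'^H_X]$, which is manifestly in the $K_0(\MHM(pt))$-submodule generated by $\im(\chi_{Hdg})$. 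Richardson and intersection varieties are handled by transversal intersection (Proposition \ref{generic}) and the stability of this submodule under $d^*$ and $\boxtimes$, not by any resolution.

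Second, and more seriously, your account of the one-dimensional singular locus case (Remark \ref{rem-3dim}, Corollary \ref{cor-1dim}) is off. The paper explicitly flags this as ``a different type of argument'': it does \emph{not} show $[IC'^H_X]$ lies in the submodule, and there is no analysis of weight filtrations along the curve or of specialization. Instead, Theorem \ref{t410} proves directly that $(L_*\circ\Pol)([M])$ and $MHT_{1*}([M])$ agree in degree $\delta=\dim\Sigma$ for any pure Hodge module supported on $\Sigma$, by comparing the Atiyah-Meyer formula of Lemma \ref{lem411} (which computes $MHT_{1,\delta}$ as $\chi_1(V)\cdot[\Sigma]$) with the Cappell--Shaneson computation $L_\delta([\alpha_{S_{can}}])=\sigma(S_p)\cdot[\Sigma]$, and then matching the signs via Lemma \ref{point}. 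When $\delta_X=1$, only degrees $0$ and $1$ are in play: degree $1$ is covered by Theorem \ref{t410}, and degree $0$ is the scalar Hodge index theorem \eqref{ihif}. The ``load-bearing new argument'' you describe---peeling off discrepancies supported on the curve via $j_{!*}$ and functoriality of $\Pol$---is not what happens; the actual argument is a pointwise sign-and-stalk computation, and the heavier new ingredient in Section \ref{secnew} is rather Theorem \ref{comparison-smooth}, the full Atiyah--Meyer comparison for smooth pure Hodge modules.
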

For rational homology manifolds, the above conjecture has been recently proved in \cite{FPS}, see also \cite{FP} for the projective case. More generally, as explained in Remark \ref{r11}, the conjecture holds whenever $[IC_X'^H]\in \im(\chi_{Hdg}).$

\begin{remark}
    By the multiplicativity of both sides of the equality \eqref{conj} (cf. \cite[Proposition 5.16]{W} for $L$-classes of Witt spaces and, resp., \cite[Section 5]{S} for $IT_{1*}$), if Conjecture \ref{BSY} holds for both $X$ and $Y$, then it holds for their product $X \times Y$.
\end{remark}

\begin{remark}
Note that for $X$ a compact variety of pure dimension $d_X$, $L_k(X)=0\in H_{2k}(X;\bQ)$ by definition if $d_X+k$ is odd (as explained in Section \ref{Lclass} below). Similarly,  $IT_{1,k}(X)=0\in H^{BM}_{2k}(X;\bQ)$ if $d_X+k$ is odd, by a duality argument (see \cite[Example 2.11]{MS2}). Here the compactness is only needed in the definition of $L$-classes, but not for the last statement about $IT_{1*}(X)$.
\end{remark}


\subsection{Cobordism groups and relation to mixed Hodge modules}\label{cobg}
In this section we recall two notions of a cobordism group of self-dual constructible complexes, one defined in terms of Verdier duality \cite{CS,Ban0,Yo,BSY} appearing in the topological definition of $L$-classes, and the other one, coming up in relation to mixed Hodge modules, defined in terms of non-degenerate pairings, cf. \cite{FPS,FPS2}.


Let $X$ be a complex algebraic variety and let $\bK$ be a subfield of $\bR$. A self-dual complex $(\cF, \alpha)$ on $X$ consists of a bounded $\bK$-complex $\cF \in D^b_c(X;\bK)$ with constructible cohomology, which is endowed with a 
self-duality isomorphism 
$$\alpha:\cF \to \cD_X(\cF)=Rhom(\cF,\bD_X),$$ where $\cD_X(-)$ is the Verdier duality functor on $X$, and $\bD_X=a_X^!\bK_{pt}$ is the $\bK$-dualizing complex on $X$, with $a_X:X \to pt$ the constant map.
In the following we implicitly assume that $\bD_X$ is represented by a bounded below complex of injective sheaves.

A pairing $S:\cF \otimes \cF \to \bD_X$ corresponds by the adjunction formula
\begin{equation}\label{adj} \theta: Hom_{D^b_c(X;\bK)}(\cF\otimes \cG, \bD_X) = 
Hom_{D^b_c(X;\bK)}(\cF, \cD_X(\cG))\end{equation}
to a morphism  $\alpha_S:\cF \to \cD_X(\cF)$. 
 Such a pairing $S$ is called {\it perfect} if  $\alpha_S$ is an isomorphism. The pairing $S$ is called {\it symmetric} (or {\it skew-symmetric}) if $S \circ \iota=S$ (or $-S$), where $$\iota :\cF \otimes \cG \to \cG \otimes \cF$$ is the standard involution on the tensor product of complexes (e.g., see \cite[1.3.4]{MSS0}). In terms of self-dual complexes, the notion of (skew-)symmetry can be defined by using the canonical biduality isomorphism $can:\cF \to \cD_X(\cD_X(\cF))$ via
 \[ \cD_X(\alpha)\circ can=\pm \alpha,\]
 see, e.g., \cite[Section 4]{BSY} or \cite{Yo}.
 Here, the biduality isomorphism $can$ is usually part of the data needed to define a cobordism group of self-dual objects, e.g., switching from $can$ to $-can$ switches the role of (skew-)symmetric objects. But in our context there is indeed a {\it canonical} choice so that $can$ corresponds to the pairing
 $$\cF \otimes Rhom(\cF,\bD_X) \simeq Rhom(\cF,\bD_X) \otimes \cF \to \bD_X \:,$$
 with the last pairing corresponding via the adjunction formula \eqref{adj} to the identity map of $Rhom(\cF,\bD_X)$ (see, e.g., \cite[Section 3.2]{CH}).
If $\alpha:\cF \to \cD_X(\cG)$ corresponds to the pairing 
$\theta(\alpha): \cF  \otimes\cG \to \bD_X $, then $\theta(\cD_X(\alpha)\circ can)$ corresponds to the pairing
$$\theta(\alpha) \circ \iota :\quad  \cG  \otimes\cF \simeq   \cF  \otimes\cG \to \bD_X \:. $$
 
 For instance, if $X$ is smooth and pure dimensional with $d_X=\dim(X)$, then $\bD_X\cong \bK_X[2d_X]$, so the canonical self-pairing $S_{can}$ on $\bK_X[d_X]$ is $(-1)^{d_X}$-symmetric (i.e., symmetric if $d_X$ is even, and skew-symmetric if $d_X$ is odd).
This corresponds to the canonical self-duality isomorphism $\alpha_{can}$ of $\bK_X[d_X]$.

The (skew-)symmetric {\it cobordism group} $\Omega_{\bK\pm}(X)
:=\Omega_{\pm}(D^b_c(X;\bK),\bD_X, can)$ is defined as the quotient of the monoid of isomorphism classes of (skew-)symmetric self-dual complexes, by a certain cobordism relation defined in terms of ($\pm$)self-dual octahedral diagrams as in \cite{Yo, BSY}. Set
\[\Omega_\bK(X):=\Omega_{\bK+}(X) \oplus \Omega_{\bK-}(X).\]

Note that these are functorial for proper morphisms, since $Rf_*$ commutes with Verdier duality for a proper morphism $f$.

\begin{remark}\label{sign}
    Let us emphasize that in the description of elements in the cobordism groups (defined either by duality or pairings), as well as in the definition of $L$-classes via signatures, sign conventions are important. E.g., a sign isomorphism can introduce negative elements or $L$-classes. Here we have to specify which of the two possible sign conventions we are using for 
    \begin{itemize}
        \item[(i)] the tensor product, as used in the context of pairings;
        \item[(ii)] the identification in \eqref{adj};
        \item[(iii)] the duality functor $\cD_X(-)=Rhom(-,\bD_X)$.
    \end{itemize}
    Since we want to compare with Saito's theory of algebraic mixed Hodge modules and the construction of the transformation $\Pol^{pair}$ from \cite{FPS}, we follow here exactly the sign conventions from \cite{Saito88, Saito89} (compare also with \cite[Appendix A]{CH}), namely:
    \begin{itemize}
        \item[(i)] For the tensor product, we use the convention
        \[ R[n] \otimes_R (-)=(-)[n] \] without any signs, for a coefficient ring like $R=\bZ, \bQ, \bR$. So moving out a shift in the first variable of the tensor product does not produce a sign, whereas a shift in the second factor could introduce a sign dictated by the symmetry isomorphism $\iota$ given by multiplication by $(-1)^{ij}$ on $\cF^i \otimes \cG^j$. More precisely,
        \[
        \cF^i \otimes_R (R[n]) \otimes_R \cG^j) = 
        (\cF^i \otimes_R R[n]) \otimes_R \cG^j
         \simeq 
        (R[n] \otimes_R \cF^i) \otimes_R \cG^j
        =(\cF^i \otimes_R \cG^j)[n],
        \]
        where the associativity is an equality without signs, and the middle symmetry isomorphism produces the sign $(-1)^{in}$.
        \item[(ii)] the identification in \eqref{adj} is an equality without any signs, as in  \cite[(1.8.1)]{Saito89} and \cite{FPS}.
        \item[(iii)] For $Rhom$ we choose the convention from \cite[(1.8.2)]{Saito89}, so that moving out a shift in the second variable does not produce any sign (see \cite[(1.8.4)]{Saito89}), whereas the sign $(-1)^{in}$ appears in the isomorphism
        \begin{equation}\label{th1}
        th_n: \:hom(\cF^i[-n],\cG^{i+j}) \simeq hom(\cF^i,\cG^{i+j})[n]\:,
        \end{equation}
        see \cite[(1.8.3)]{Saito89}. This means that the Verdier duality functor $\cD_X$ is not {\it strict} in the sense of \cite[Remark 2.1.2]{CH}.
        \end{itemize}
        Note that an even shift does not involve any signs. Also, these sign conventions fit with those from \cite{Ban0}. The composition of the sign-isomorphism $th_n$ for different shifts $[n]$ only commutes  up to the following sign:
        \begin{equation}\label{th2}\begin{CD}
         hom(\cF^i[-(n+m)],\cG^{i+j})  @> th_n > \sim >
         hom(\cF^i[-m],\cG^{i+j})[n] \\
        @V  th_{n+m} V \wr V @V \wr V th_m V\\
         hom(\cF^i,\cG^{i+j})[n+m] 
         @> (-1)^{nm}\cdot id > \sim >
         hom(\cF^i,\cG^{i+j})[n+m] \:,
        \end{CD}\end{equation}
        i.e., only for $n$ and $m$ both odd an additional minus sign shows up. For example the inverse of 
        $th_n $ is $(-1)^n\cdot th_{-n}$.
\end{remark}

Let us  mention here that a self-dual complex $(\cF, \alpha)$ is cobordant to its zeroth perverse cohomology ${^p\cH}^0(\cF,\alpha)$ (cf. \cite[Example 6.6]{Yo}), hence they give the same element 
\begin{equation}\label{cob}
 [(\cF, \alpha)]=   [{^p\cH}^0(\cF,\alpha)] \in \Omega_\bK(X).
\end{equation} 
Here we use the middle perversity, which is self-dual with respect to Verdier duality, so that we get an induced duality isomorphism:
$${^p\cH}^0(\alpha):{^p\cH}^0(\cF) \to {^p\cH}^0(\cD_X(\cF))=\cD_X({^p\cH}^0(\cF)).$$

Self-duality isomorphisms and cobordism relations are stable by the derived pushforward $Rf_*$ under a proper morphism of complex varieties $f:X \to Y$, so one gets a pushforward 
\[f_*:\Omega_\bK(X) \to \Omega_\bK(Y)\]
compatible with the above decomposition.\\

Following \cite[Section 4]{BSY}, one has a natural transformation
\[ \sd_\bK:K_0(var/X) \to \Omega_\bK(X), \]
which on a generator $[f:Z\to X]$ with $Z$ smooth pure dimensional and proper
(or projective) over $X$ is defined by $\sd_\bK([f])=f_*[(\bK_Z[d_Z], \alpha_{can})]$, with $d_Z=\dim(Z)$. Note that the construction of $\sd_\bK$ in \cite{BSY} uses Bittner's presentation \cite{Bi} of $K_0(var/X)$ in terms of a blow-up relation coming from the weak factorization theorem.
It is important to also note that in general  $\sd_\bK([id_X])\neq [IC_X]$.

\begin{remark}
    The cobordism class $[(\bK_Z[d_Z], \alpha_{can})]$ agrees with the corresponding class originally introduced in \cite[Theorem 4.2]{BSY}
    via the multiplication map ${\rm m}: \bK_Z\otimes \bK_Z\to \bK_Z$. Indeed, for $Z$ smooth of pure dimension $d_Z$, \cite[Example 4.1]{BSY}
    starts  more generally with a (non-degenerate) pairing 
    $$S: \bL \otimes \bL \to \bK_Z$$ of a local system $\bL$ on $Z$ in degree zero, corresponding via the adjunction \eqref{adj} to an isomorphism $$\alpha: \bL\to \bL^{\vee} =hom(\bL,\bK_Z)=Rhom(\bL,\bK_Z)\:,$$
     which is then shifted by $[m+d_Z]$ (for $m\in \bZ$ fixed,
     with $m=0$ corresponding to  \cite[Example 4.1]{BSY}):
     \begin{equation}\begin{split}
         \alpha[m+d_Z]: \bL[m+d_Z] &\to 
         Rhom(\bL,\bK_Z)[-(m+d_Z)][2(m+d_Z)]\\
     &\simeq Rhom(\bL[m+d_Z],\bK_Z[2(m+d_Z)]) \\ &=\cD_X(\bL[m+d_Z])[2m]
     \end{split} \end{equation}
     (this doesn't introduce any signs since $\bL$ is in degree zero). Then $\alpha$ can be recovered via the induced map
     $$\bL=\cH^{-(m+d_Z)}(\bL[m+d_Z]) \to \cH^{-(m+d_Z)}(\cD_X(\bL[m+d_Z])[2m])=\bL^{\vee}\:. $$
     Similarly $\alpha[d_Z]$ corresponds (for $m=0$) via \eqref{adj} to a shifted pairing
     \begin{equation}
      S' : \bL[d_Z] \otimes \bL[d_Z] \to \bD_Z=\bK[2d_Z] \:,
     \end{equation}
     giving back $S$ via
     $$\bL\otimes \bL = \cH^{-2d_Z}(\bL[d_Z]\otimes \bL[d_Z]) \to \cH^{-2d_Z}(\bD_Z)=\bK_Z\:.$$
\end{remark}

The following result was proved in \cite[Proposition 1, Remark 1.2c]{FPS}.
\begin{proposition}\cite{FPS}\label{pr1} \ Let $M$ be a pure ($\bQ$-)Hodge module of weight $w$ on $X$, and let $\cF_\bR:=\cF_\bK \otimes_\bK \bR$ be its underlying $\bR$-complex. Let 
\[S_\bR: \cF_\bR \otimes \cF_\bR \to \bD_X(-w) \simeq \bD_X\]
be the scalar extension of a polarization $S$ on the ($\bQ$-)Hodge module $M$, and let $\alpha_\bR$ be the corresponding self-duality isomorphism. Then the cobordism class $[(\cF_\bR, \alpha_\bR)] \in \Omega_\bR(X)$ does not depend on the choice of a polarization $S$.
\end{proposition}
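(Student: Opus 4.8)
The plan is to establish the stronger assertion that two polarizations yield \emph{isomorphic} self-dual objects, which a fortiori gives the same cobordism class. To begin, I would note that the underlying $\bR$-complex $\cF_\bR$ of the pure Hodge module $M$ is the scalar extension of the perverse sheaf $\mathrm{rat}(M)$, so $(\cF_\bR,\alpha_\bR)$ already coincides with its own zeroth perverse cohomology and no reduction via \eqref{cob} is needed; and since a polarizable pure Hodge module is semisimple, the endomorphism algebra $A:=\mathrm{End}(\cF_\bR)$ is a finite-dimensional semisimple $\bR$-algebra, hence a finite product $\prod_i M_{n_i}(D_i)$ with $D_i\in\{\bR,\bC,\bH\}$.

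Next, let $S_0,S_1$ be two polarizations of $M$, with associated scalar-extended self-duality isomorphisms $\alpha_0,\alpha_1\colon\cF_\bR\to\cD_X(\cF_\bR)$; both carry the common symmetry type determined by the weight $w$ of $M$ in the sign conventions of Remark \ref{sign}, so both represent classes in the same summand of $\Omega_\bR(X)$. I would set $g:=\alpha_0^{-1}\circ\alpha_1\in A^{\times}$, which in the pairing picture is characterized by $S_1=S_0(g-,-)$. Using the (skew-)symmetry of $S_0$ together with the adjunction \eqref{adj}, one checks that $g$ is self-adjoint for the involution $(-)^{\dagger_0}$ of $A$ given by $a\mapsto\alpha_0^{-1}\circ\cD_X(a)\circ\alpha_0$. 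The crucial point is that $g$ is moreover \emph{positive}: the polarization $S_0$ makes $(-)^{\dagger_0}$ a positive (Rosati-type) involution on $A$, and on each simple factor $M_{n_i}(D_i)$ the element $g$ is the ratio $H_{0,i}^{-1}H_{1,i}$ of two positive-definite $D_i$-Hermitian matrices, which is always $H_{0,i}$-self-adjoint with positive real spectrum. Hence the minimal polynomial of $g$ has distinct positive real roots, and Lagrange interpolation produces a self-adjoint element $\varphi:=g^{-1/2}\in A^{\times}$ (a real polynomial in $g$) with $\varphi^{\dagger_0}=\varphi$ and $\varphi^2 g=\mathrm{id}$.

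It then remains to verify that $\varphi$ realizes an isomorphism of self-dual complexes $(\cF_\bR,\alpha_1)\xrightarrow{\ \sim\ }(\cF_\bR,\alpha_0)$, i.e.\ $\cD_X(\varphi)\circ\alpha_1\circ\varphi=\alpha_0$; in pairing terms this reads $S_1(\varphi-,\varphi-)=S_0$, which follows from $\varphi^{\dagger_0}g\,\varphi=g^{-1/2}\,g\,g^{-1/2}=\mathrm{id}$. Compatibility with the canonical biduality $can$ is automatic by its naturality, and the $(\pm)$-symmetry type is preserved since it is a property of the self-duality isomorphism and not extra data. As isomorphic self-dual complexes represent the same element already in the monoid of isomorphism classes underlying $\Omega_\bR(X)$, we conclude $[(\cF_\bR,\alpha_1)]=[(\cF_\bR,\alpha_0)]$, which is the claim.

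The step I expect to require the most care is the positivity of $g$: one must invoke that a polarization induces a positive involution on the endomorphism algebra of the underlying perverse sheaf and that positive self-adjoint elements of such an algebra admit self-adjoint positive square roots inside it — facts which ultimately rest on the structure theory of semisimple $\bR$-algebras with positive involution and on the classification of positive-definite Hermitian forms over $\bR$, $\bC$, and $\bH$. A related bookkeeping burden is to track Saito's sign conventions from Remark \ref{sign} throughout, so as to be sure that each $\alpha_{S_i}$ is an honest self-duality isomorphism of the correct symmetry type and that an isomorphism of self-dual objects does give equal cobordism classes. As an alternative to the square-root step, one may instead decompose $M_\bR$ into simple summands: by Schur's lemma the components of $\alpha_S$ between non-dual simples vanish, the dual pairs of distinct simples contribute null-cobordant hyperbolic summands, and on each self-dual simple summand a polarization is unique up to a positive unit of the corresponding division algebra, which does not affect the cobordism class.
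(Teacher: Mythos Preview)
Your proposal is correct and follows essentially the same approach as the paper, which does not give its own proof but cites \cite[Proposition 1, Remark 1.2c]{FPS} and explicitly points out in the subsequent remark that ``even the isomorphism class $[(\cF_\bR, \alpha_\bR)]$ does not depend on the choice of a polarization $S$'', and that the key is that over $\bR$ one can take square roots of positive real numbers. Your argument makes this precise: the ratio $g=\alpha_0^{-1}\alpha_1$ of two polarization-induced dualities is a positive self-adjoint element of the semisimple $\bR$-algebra $\mathrm{End}(\cF_\bR)$, hence admits a self-adjoint square root $\varphi=g^{-1/2}$ giving the desired isomorphism of self-dual objects---which is exactly the square-root mechanism alluded to in the paper and in \cite[Section 2.1]{FPS}.
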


\begin{remark}
The isomorphism $\bD_X(-w) \simeq \bD_X$ comes from ommitting in this topological context the Tate twists via the choice of $i=\sqrt{-1}\in \bC$
coming from the complex orientation of $\bC$.
    The reader should be aware that the original result of \cite[Proposition 1]{FPS} is formulated in terms of an a priori different notion of cobordism group $\Omega^{pair}_\bR(X)$ defined in terms of perfect pairings. However, by \cite[Remark 1.2c]{FPS}, even the isomorphism class $[(\cF_\bR, \alpha_\bR)] \in \Omega_\bR(X)$ does not depend on the choice of a polarization $S$. For this, it is important to work (with sheaves) over the real numbers, where square roots of positive real numbers are available (see \cite[Section 2.1]{FPS} for more details).
\end{remark}


From this one can deduce the following important result,
based on M. Saito's stability result of pure polarized Hodge modules for projective morphisms, including the relative hard 
Lefschetz theorem and the corresponding relative Hodge-Riemann bilinear relations. 
\begin{thm}\cite[Theorem 3]{FPS}\label{thpol} \
There is a natural transformation 
\[ \Pol:K_0(\MHM(X)) \to \Omega_\bR(X)\]
defined by \begin{equation}\label{pol} \Pol([M])=[(\cF_\bR, (-1)^{w(w+1)/2}\alpha_\bR)] \in \Omega_\bR(X),\end{equation}
for a pure ($\bQ$-)Hodge module $M$ of weight $w$ on $X$, with $(\cF_\bR, \alpha_\bR)$ as in Proposition \ref{pr1}. This transformation is compatible with the pushforward by a proper morphism $f:X \to Y$, i.e., \[\Pol \circ f_*=f_* \circ \Pol.\]
\end{thm}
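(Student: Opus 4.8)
\textbf{Proof plan for Theorem \ref{thpol}.}

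The plan is to construct $\Pol$ directly on generators and then verify it is well-defined and functorial by reducing everything to Proposition \ref{pr1} together with M. Saito's structure theory. First I would recall that $K_0(\MHM(X))$ is generated by classes $[M]$ of \emph{pure} polarizable Hodge modules: indeed, any object of $D^b\MHM(X)$ is, in $K$-theory, the alternating sum of its (mixed) cohomology modules, and each mixed Hodge module admits a finite weight filtration whose graded pieces are pure and polarizable; hence $[M^\bullet]=\sum_i(-1)^i\sum_w[\gr^W_w H^iM^\bullet]$. So it suffices to define $\Pol([M])$ for $M$ pure of some weight $w$, and extend additively. For such $M$, choose a polarization $S$ of $M$, scalar-extend to $S_\bR:\cF_\bR\otimes\cF_\bR\to\bD_X(-w)\simeq\bD_X$ (using the fixed choice of $i=\sqrt{-1}$ to trivialize the Tate twist as in the Remark following Proposition \ref{pr1}), let $\alpha_\bR$ be the associated self-duality isomorphism, and set $\Pol([M]):=[(\cF_\bR,(-1)^{w(w+1)/2}\alpha_\bR)]\in\Omega_\bR(X)$. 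Here the sign $(-1)^{w(w+1)/2}$ is inserted precisely so that, after the degree shift conventions of Remark \ref{sign} are unwound, the resulting pairing has the symmetry type ($+$ or $-$) that lands it in the correct summand $\Omega_{\bR\pm}(X)$; one checks this is consistent with the case $M=\bQ^H_Z[d_Z]$ on $Z$ smooth of pure dimension $d_Z$, where $w=d_Z$ and the canonical pairing is $(-1)^{d_Z}$-symmetric, matching $(-1)^{d_Z(d_Z+1)/2}$ up to the shift bookkeeping.

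Second, I would check that $\Pol$ is well-defined. There are two things to verify. (a) Independence of the polarization: this is exactly Proposition \ref{pr1}, which says the cobordism class $[(\cF_\bR,\alpha_\bR)]$ is independent of $S$ (and multiplying by the fixed sign $(-1)^{w(w+1)/2}$ does not affect this). (b) Compatibility with the defining relations of $K_0(\MHM(X))$, i.e.\ that for a short exact sequence $0\to M'\to M\to M''\to 0$ of pure Hodge modules of the same weight $w$ one has $\Pol([M])=\Pol([M'])+\Pol([M''])$ in $\Omega_\bR(X)$; this follows because a short exact sequence of polarizable pure Hodge modules of the same weight splits (a polarizable pure Hodge module is semisimple by Saito's theory), and an orthogonal direct sum of polarized self-dual complexes represents the sum of their cobordism classes by the very definition of the monoid structure on $\Omega_{\bR\pm}(X)$. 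Combined with the reduction to graded pieces of the weight filtration in the first step, this shows $\Pol$ descends to a homomorphism $K_0(\MHM(X))\to\Omega_\bR(X)$.

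Third, and this is the main obstacle, I would prove compatibility with proper pushforward: $\Pol\circ f_*=f_*\circ\Pol$ for $f:X\to Y$ proper. By functoriality of both sides it suffices to treat the case of a \emph{projective} morphism (factor $f$ through a projective morphism and a closed immersion, or reduce to the projective case via Chow's lemma and the blow-up formalism), and then to evaluate on a generator $[M]$ with $M$ pure polarized of weight $w$ on $X$. Here one invokes M. Saito's stability theorem: $f_*M$ (in $D^b\MHM(Y)$) is, in $K$-theory, $\sum_k(-1)^k[{}^p\!H^k f_*M]$, each ${}^p\!H^kf_*M$ is a pure Hodge module of weight $w+k$ on $Y$, and — crucially — the relative hard Lefschetz isomorphism together with the relative Hodge–Riemann bilinear relations endow each ${}^p\!H^kf_*M$ with an explicit polarization built from $S$ and the relative ample class. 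On the topological side, the Verdier self-duality isomorphism $\alpha_\bR$ pushes forward (since $Rf_*$ commutes with $\cD$ for $f$ proper) and, after applying the cobordism relation \eqref{cob} decomposing a self-dual complex into its perverse cohomology sheaves, $f_*[(\cF_\bR,\alpha_\bR)]=\sum_k(-1)^k[{}^p\cH^k(Rf_*\cF_\bR,Rf_*\alpha_\bR)]$ in $\Omega_\bR(Y)$. The content is then that the decomposition-theorem splitting of $Rf_*\cF_\bR$ into its perverse pieces is realized, \emph{with signs}, by Saito's polarizations on the ${}^p\!H^kf_*M$; tracking the weight shift $w\rightsquigarrow w+k$ through the sign $(-1)^{w(w+1)/2}$ versus $(-1)^{(w+k)(w+k+1)/2}$ and through the perverse-degree shift conventions of Remark \ref{sign} is where all the care is needed, and this bookkeeping — rather than any new geometric input — is the heart of the argument. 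Since the statement as quoted is \cite[Theorem 3]{FPS}, I would at this point simply cite \emph{loc.\ cit.}\ for the detailed sign verification, having indicated that the three ingredients (reduction to pure generators, Proposition \ref{pr1}, and Saito's relative Hodge–Riemann relations) are exactly what drive the proof.
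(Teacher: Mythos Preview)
Your proposal is correct and aligns with the paper's treatment: the paper does not give a self-contained proof but cites \cite[Theorem 3]{FPS}, noting (in the lead-in and the subsequent remark) that the argument rests on exactly the ingredients you identify --- Proposition \ref{pr1}, the cobordism reduction \eqref{cob} to perverse cohomology, and Saito's stability theorem for projective morphisms (relative hard Lefschetz and Hodge--Riemann bilinear relations). Your outline of well-definedness via semisimplicity of polarizable pure Hodge modules and functoriality via the polarized decomposition of $f_*M$ is precisely the strategy of \cite{FPS}; the only point the paper adds is that the same proof goes through for $\Omega_\bR(X)$ rather than $\Omega^{pair}_\bR(X)$, since only Proposition \ref{pr1} and \eqref{cob} are used.
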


\begin{remark}
As already pointed out before, Theorem \ref{thpol} is originally stated in \cite{FPS} in terms of the cobordism group $\Omega^{pair}_\bR(X)$, but its proof only uses Proposition \ref{pr1} together with \eqref{cob} (the counterpart of which consists of \cite[Proposition 1.1a]{FPS}), so the proof also applies to the cobordism group $\Omega_\bR(X)$. 
\end{remark}

This implies the commutativity of the following part of diagram \eqref{diag1}, i.e., it gives the intrinsic meaning of the transformation 
$\sd_\bR$.
\begin{corollary}\cite[Corollary 1]{FPS}
One has a commutative diagram
\begin{equation}
\begin{tikzcd}\label{diag2}
K_0(var/X) \arrow{r}{\chi_{Hdg}}\arrow{rd}[swap]{\sd_\bR}
&K_0(\MHM(X)) \arrow{d}{\Pol}\\
&\Omega_\bR(X)\:.
\end{tikzcd}
\end{equation}
\end{corollary}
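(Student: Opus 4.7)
The plan is to verify commutativity by reducing to generators of $K_0(var/X)$ and then computing both sides directly. By Bittner's presentation \cite{Bi}, which is recalled in the text as underlying the BSY construction of $\sd_\bR$, the Grothendieck group $K_0(var/X)$ is generated by classes $[f:Z\to X]$ with $Z$ smooth pure-dimensional and projective over $X$. Both $\sd_\bR$ and $\Pol\circ\chi_{Hdg}$ are group homomorphisms $K_0(var/X)\to\Omega_\bR(X)$ commuting with proper pushforward (the former by its very construction, the latter as a composition of $\chi_{Hdg}$, which satisfies $\chi_{Hdg}([f])=[f_!\bQ^H_Z]=[f_*\bQ^H_Z]$ when $f$ is proper, and $\Pol$, whose proper-pushforward compatibility is the second assertion of Theorem \ref{thpol}). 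Hence it suffices to verify the equality $\Pol\circ\chi_{Hdg}([f]) = \sd_\bR([f])$ on such generators, and using proper pushforward this reduces further to the case $f=id_Z$ for $Z$ smooth and pure-dimensional.

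So the task becomes to prove that, for any smooth $Z$ of pure dimension $d_Z$,
\[
\Pol([\bQ^H_Z]) = [(\bR_Z[d_Z],\alpha_{can})] \in \Omega_\bR(Z),
\]
since the right-hand side is the definition of $\sd_\bR([id_Z])$ and $\chi_{Hdg}([id_Z])=[\bQ^H_Z]$ by construction. Here I would unpack $[\bQ^H_Z]$ in terms of pure Hodge modules: the shifted object $\bQ^H_Z[d_Z]_\MHM$ is the constant pure Hodge module of weight $w=d_Z$ on $Z$, and as a complex in $D^b\MHM(Z)$ the object $\bQ^H_Z$ is concentrated in a single perverse degree equal to $d_Z$, so that in the Grothendieck group
\[
[\bQ^H_Z] = (-1)^{d_Z}\,[\bQ^H_Z[d_Z]_\MHM].
\]
Applying formula \eqref{pol} from Theorem \ref{thpol} then yields
\[
\Pol([\bQ^H_Z]) \;=\; (-1)^{d_Z+d_Z(d_Z+1)/2}\,[(\bR_Z[d_Z],\alpha_\bR)],
\]
where $\alpha_\bR$ is the self-duality isomorphism on $\bR_Z[d_Z]$ induced by any choice of polarization on the constant pure Hodge module $\bQ^H_Z[d_Z]_\MHM$ (the class being independent of that choice by Proposition \ref{pr1}).

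It remains to identify $(-1)^{d_Z+d_Z(d_Z+1)/2}\,\alpha_\bR$ with the canonical self-duality $\alpha_{can}$ on $\bR_Z[d_Z]$ arising from the multiplication pairing $\bR_Z\otimes\bR_Z\to\bR_Z\simeq \bD_Z[-2d_Z]$ shifted up by $[d_Z]$. Since both pairings are non-degenerate on a rank-one local system and differ only by a nonzero real scalar, Proposition \ref{pr1} plus the square-root trick in $\bR$ shows that any positive rescaling gives the same cobordism class, so only the sign matters. That sign must be computed by tracking the shift isomorphisms $th_n$ of \eqref{th1} and the $(-1)^{nm}$ discrepancy in \eqref{th2}, together with the $(-1)^w$-symmetry of the polarization of a weight-$w$ Hodge module, all in the specific sign convention spelled out in Remark \ref{sign}. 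The hard part is exactly this careful sign bookkeeping: verifying that the three sources of signs, namely (i) the Grothendieck-group shift factor $(-1)^{d_Z}$, (ii) the normalizing factor $(-1)^{w(w+1)/2}$ built into the definition \eqref{pol} of $\Pol$, and (iii) the shift-to-duality sign produced by passing from the multiplication pairing $\bR_Z\otimes\bR_Z\to\bR_Z$ to its shifted version $\bR_Z[d_Z]\otimes\bR_Z[d_Z]\to \bD_Z$, combine to the identity, so that $\Pol([\bQ^H_Z])=[(\bR_Z[d_Z],\alpha_{can})]$ and the corollary follows.
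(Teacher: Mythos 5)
Your reduction to generators and the identification of $\sd_\bR([id_Z])=[(\bR_Z[d_Z],\alpha_{can})]$ and $\chi_{Hdg}([id_Z])=[\bQ^H_Z]$ exactly mirror the paper's argument, as does the step $[\bQ^H_Z]=(-1)^{d_Z}[\bQ^H_Z[d_Z]]$ and the application of the definition of $\Pol$. The issue lies in the final step, which you leave unverified: the claim that the various signs ``combine to the identity.'' You correctly see that one must compare the polarization-induced duality $\alpha_\bR$ with $\alpha_{can}$, but you mislocate the source of the crucial sign. Shifting the degree-zero sheaf $\bR_Z$ to $\bR_Z[d_Z]$ introduces \emph{no} sign at all (the paper's own remark preceding Proposition~\ref{pr1} points out explicitly that ``this doesn't introduce any signs since $\bL$ is in degree zero''), so your ``source (iii)'' does not produce anything. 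The sign $(-1)^{d_X(d_X-1)/2}$ that makes the computation close is not a consequence of the shift isomorphisms $th_n$; it is a normalization \emph{built into} Saito's definition of the polarization on $\bQ^H_X[d_X]$, namely the identity
\[
S^H=(-1)^{d_X(d_X-1)/2}\,S_{can}
\]
cited in \eqref{Sai} from Saito's Lemma 5.2.12 and 5.4.1. Without invoking this fact (or reproving it), the computation stalls exactly where you say ``the hard part is,'' and the argument does not go through: the factor $(-1)^{d_Z+d_Z(d_Z+1)/2}$ you isolate must be matched against a concrete relation between $S^H$ and $S_{can}$, which your listed sources do not supply. With \eqref{Sai} in hand the remaining bookkeeping is a one-line check,
\[
(-1)^{-d_X}\cdot(-1)^{d_X(d_X+1)/2}\cdot(-1)^{d_X(d_X-1)/2}=(-1)^{-d_X}\cdot(-1)^{d_X^2}=1,
\]
which is what the paper does.
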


\begin{proof}
We include the proof here, in order to get a better understanding of the signs involved in the construction of $\Pol$. Since $K_0(var/X)$ is generated by classes of proper maps $Y \to X$ with $Y$ smooth, and all morphisms in (\ref{diag2}) commute with proper pushforwards, it suffices to verify the commutativity of (\ref{diag2}) for $[id_X]$ with $X$  smooth.
For simplicity, we assume $\bK=\bQ$.

Let $d_X=\dim (X)$ and denote by $\bQ_X$ the constant variation of weight zero on $X$ with the trivial pairing
 \[S: \bQ_X \otimes\bQ_X \rightarrow \bQ_X\] 
 given by multiplication. 
 Consider also the induced pairing
 \[ S_{can}: \bQ_X[d_X] \otimes \bQ_X[d_X] \rightarrow \bQ_X[2d_X],\]
 given by
 \[S: \bQ_X \otimes\bQ_X = \cH^{-2d_X}(\bQ_X[d_X] \otimes \bQ_X[d_X])\rightarrow \bQ_X .\]
Then,
 by definition, 
 \[ \sd_{\bR}([id_X])=[(\bR_X[d_X], \alpha_{can})]\in \Omega_\bR(X).\]
On the other hand, 
 \[
  \chi_{Hdg}([id_X])=\bQ_X^H=\bQ_X^H[d_X][-d_X] \in K_0(\MHM(X)), 
  \]  
where $\bQ_X^H[d_X]$ is the pure ($\bQ$-)Hodge module of weight $d_X$ on $X$, with polarization
 \[S^H: \bQ_X[d_X] \otimes \bQ_X[d_X] \rightarrow \bD_X(-d_X)\simeq \bQ_X[2d_X].\]
 By \cite[Lemma 5.2.12 and 5.4.1]{Saito88}, one has the identification \begin{equation}\label{Sai} S^H= (-1)^{d_X(d_X-1)/2}S_{can}.\end{equation}

Finally, using the definition of $\Pol$ from \eqref{pol}, together with \eqref{Sai}  and the identification 
$$[\bQ^H_X]=(-1)^{d_X}[\bQ^H_X[d_X]] \in K_0(\MHM(X)),$$
we have
\begin{equation*}
\begin{split}
\Pol([\bQ_X^H])&=(-1)^{-d_X}\Pol([\bQ_X^H[d_X]])=
(-1)^{-d_X}(-1)^{d_X(d_X+1)/2}[\bR_X[d_X], (\alpha_{S^H})_{\bR}]\\&=(-1)^{d_X(d_X-1)/2}[\bR_X[d_X], (\alpha_{S^H})_{\bR}]=
    [\bR_X[d_X],  \alpha_{can}]=\sd_\bR([id_X])
    \end{split}
\end{equation*}
\end{proof}

\begin{remark}
    Note that only our formulation of the above result in terms of the cobordism group $\Omega_\bR(X)$ fits with the original definition of $\sd_\bR$ in \cite{BSY}, whereas \cite{FPS} defines a corresponding transformation $$\sd^{pair}_\bR:K_0(var/X)\to \Omega^{pair}_\bR(X)$$
    so that $\tau \circ \sd_\bR=\sd_\bR^{pair}.$
   Here the canonical surjection $\tau:\Omega_\bR(X) \to \Omega^{pair}_\bR(X)$ comes from \cite[Remark 1.1d]{FPS}, together with the fact that we follow exactly the sign conventions of \cite{FPS}. Moreover, as noted in \cite[Remark 1.2c]{FPS}, this group homomorphism of cobordism groups is even an isomorphism.
\end{remark}

\begin{remark}\label{ext-pol}
    The above argument in the smooth case can be extended to show that for $X$ a complex algebraic variety of pure dimension $d_X$ one has
    \[\Pol([IC'^H_X])=[IC_X, \alpha_{can}] \in \Omega_\bR(X)\]
    with the natural self-duality (or pairing) on $IC_X$ extended uniquely from that of the smooth locus. Similarly, M. Saito's work allows one to extend the polarization $S$ of the constant variation $\bQ_{X_{reg}}$ on the smooth locus $X_{reg}$ of $X$ to a polarization $S^H$  of the pure Hodge module $IC_X^H$ with the same sign as in \eqref{Sai}, see also \cite{FPS} for more details. 
\end{remark}

The commutativity of diagram \eqref{diag2} is then used to prove the following theorem (see \cite[Theorem 1]{FPS} for a more general formulation):
\begin{thm}\label{pfs}
If $X$ is a pure dimensional  complex algebraic variety which is a rational homology manifold, then 
\begin{equation}\label{eqmain}
\sd_\bR([id_X])=[IC_X,\alpha_{can}] \in \Omega_\bR(X).
\end{equation}
\end{thm}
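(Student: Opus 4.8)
The plan is to obtain the identity \eqref{eqmain} as a formal consequence of the commutativity of diagram \eqref{diag2} and of Remark \ref{ext-pol}, the only genuinely new input being the identification, available because $X$ is a rational homology manifold, of the constant Hodge complex $\bQ_X^H$ with the shifted intersection cohomology Hodge module $IC'^H_X$.

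First I would invoke the commutativity of \eqref{diag2}, that is $\sd_\bR=\Pol\circ\chi_{Hdg}$, together with $\chi_{Hdg}([id_X])=[\bQ_X^H]$, to rewrite the left-hand side of \eqref{eqmain} as
\[
\sd_\bR([id_X]) \;=\; \Pol\bigl(\chi_{Hdg}([id_X])\bigr) \;=\; \Pol\bigl([\bQ_X^H]\bigr)\;\in\;\Omega_\bR(X).
\]

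The key step is then to show that $[\bQ_X^H]=[IC'^H_X]$ in $K_0(\MHM(X))$; this is precisely the content of Example \ref{ex12}. Put $d_X=\dim(X)$. Since $X$ is a pure-dimensional $\bQ$-homology manifold we have $\bD_X\simeq\bQ_X[2d_X]$, so the constant complex $\bQ_X[d_X]$ is a self-dual perverse sheaf whose restriction to the smooth locus is $\bQ_{X_{reg}}[d_X]$; by the characterization of the intermediate extension it is therefore isomorphic to $IC_X$ (arguing componentwise if $X$ is reducible). By the uniqueness part of M. Saito's theory — a simple perverse sheaf underlies at most one polarizable pure Hodge module, and that module is determined by the polarizable variation it carries on a Zariski-dense open of $X_{reg}$ — this upgrades to an isomorphism $\bQ_X^H[d_X]\simeq IC_X^H$ of pure Hodge modules of weight $d_X$. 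Passing to $K_0(\MHM(X))$ and shifting by $[-d_X]$ gives $[\bQ_X^H]=[IC'^H_X]$. Applying $\Pol$ and then using Remark \ref{ext-pol}, I obtain
\[
\sd_\bR([id_X]) \;=\; \Pol\bigl([IC'^H_X]\bigr) \;=\; [IC_X,\alpha_{can}]\;\in\;\Omega_\bR(X),
\]
which is the asserted equality.

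Since the genuinely hard content here — M. Saito's stability theorem for pure polarized Hodge modules under projective maps, the relative hard Lefschetz theorem, and the relative Hodge--Riemann bilinear relations — is already packaged into the well-definedness and functoriality of $\Pol$ provided by Theorem \ref{thpol}, I expect the only delicate point to be the bookkeeping of self-duality isomorphisms and weight-dependent signs. Concretely: the polarization on $\bQ_X^H[d_X]$ transported through the isomorphism $\bQ_X^H[d_X]\simeq IC_X^H$ need not be literally the canonical pairing on $IC_X$, but Proposition \ref{pr1} guarantees that the resulting class in $\Omega_\bR(X)$ does not depend on this choice, and the sign factor $(-1)^{w(w+1)/2}$ built into the definition \eqref{pol} of $\Pol$ is exactly what makes the output match $[IC_X,\alpha_{can}]$ equipped with its canonical self-dual pairing; in the smooth case this is the sign computation using the identity $S^H=(-1)^{d_X(d_X-1)/2}S_{can}$ recalled in \eqref{Sai}, while Remark \ref{ext-pol} records that the same sign persists on a singular $\bQ$-homology manifold. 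As a consistency check, for $X$ smooth one recovers the normalization $\sd_\bR([id_X])=[\bR_X[d_X],\alpha_{can}]$, so \eqref{eqmain} is a strict generalization of the smooth case.
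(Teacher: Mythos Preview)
Your argument is correct and is precisely the route the paper indicates: the paper does not give its own proof of Theorem \ref{pfs} but states it as a consequence of diagram \eqref{diag2} (referring to \cite[Theorem 1]{FPS}), and your chain $\sd_\bR([id_X])=\Pol([\bQ_X^H])=\Pol([IC'^H_X])=[IC_X,\alpha_{can}]$ via Example \ref{ex12} and Remark \ref{ext-pol} is exactly that deduction. The sign discussion you include is likewise consistent with the paper's remark that the factor $(-1)^{w(w+1)/2}$ in \eqref{pol} is tailored to make both the functoriality of $\Pol$ and the identity \eqref{eqmain} come out right.
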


\begin{remark}
    The sign $(-1)^{w(w+1)/2}$ in the definition \eqref{pol} of the transformation $\Pol$ is designed for the proof of functoriality given in \cite{FPS}, as well as the proof of \eqref{eqmain}.
\end{remark}

Note that $\bQ^H_X\simeq IC'^H_X$ for $X$ a rational homology manifold.
As noted in \cite[Proposition 2 and Section 2.5]{FPS}, formula \eqref{eqmain} does {\it not} hold for $\bK=\bQ$, the example given in loc.cit. being that of a compact surface with only one $A_1$ (ordinary double point) singularity. This is due to the richer structure  of the Witt group of symmetric bilinear forms over rationals, compared to that over the reals.


\subsection{L-classes}\label{Lclass}
Stratified bordism invariant $L$-classes $L_*(X)$ for compact oriented stratified pseudo-manifolds $X$ with only even (co)dimension strata have been introduced in \cite{GM} by Goresky and MacPherson, relying on a classical Pontrjagin construction. More generally, Cappell-Shaneson \cite{CS} (see also \cite[Section 8.2.3]{Ban0}) introduced in this context an $L$-class $L_*^{CS}$ for elements in a suitable cobordism group 
$$\Omega^{CS}_{\cS, \bK}(X,\cD_X[2\dim(X)]):=\Omega(D^b_{\cS-c}(X;\bK),\cD_X[2\dim(X)], can) $$ 
of  $\bK$-constructible complexes $\cF$ which are self-dual with respect to a shifted Verdier duality (and working  with a fixed Whitney stratification $\cS$  with only even dimensional strata). 
But as we now explain, to construct a functorial $L$-class transformation $L_*$  via the methods of Cappell-Shaneson \cite{CS} it is more natural to work with the (unshifted) Verdier duality, using the 
limit with respect to refinements of algebraic Whitney stratifications $\cS$ of $X$, 
 so that one gets an $L$-class transformation: $$L_*: \Omega_\bK(X):=\Omega_\bK(X, \cD_X) \simeq  \lim_{\cS}\:\Omega^{CS}_{\cS,\bK}(X,\cD_X) \to H_*(X) \otimes \bQ=H_{2*}(X;\bQ),$$ which will be functorial for  pushforwards under morphisms of compact varieties. In this setting, the Goresky-MacPherson $L$-class will be given by $$L_*(X)= L_*([IC_X,\alpha_{can}]).$$
(Here we can work as before with a subfield $\bK$ of $\bR$, but the $L$-class transformation only depends on the associated complexes $\cF_\bR$ with real coefficients, as used before.) 

\begin{remark}
    From the viewpoint of algebraic geometry, and in particular for the use of mixed Hodge modules, it is more natural to work with the cobordism group $\Omega_\bK(X):=\Omega_\bK(X, \cD_X)$ defined via Verdier duality without a shift. On the other hand, in topology, especially in relation to intersection cohomology, most often different conventions are used. This is in particular the case for the $L$-classes $L_*^{CS} $ of Cappell-Shaneson \cite{CS} defined on $\Omega^{CS}_{\cS,\bK}(X,\cD_X[2\dim(X)])$ and taking $\dim(X)$ into account. 
    While Cappell-Shaneson do not specify in \cite{CS} the sign convention for the $Rhom$ functor, the $L$-class transformation evaluated on (shifted) intersection complexes should be the Goresky-MacPherson $L$-class. As we will see later, this forces the choice of the sign convention as in Remark \ref{sign}.
\end{remark}

Let us now explain the construction of this functorial $L$-class, directly in terms of our cobordism group $\Omega_\bK(X)=\Omega_\bK(X, \cD_X)$, i.e., we work with the perverse conventions. Let $X$ be a compact  complex algebraic variety of pure dimension $d_X$, together with a self-dual complex $(\cF, \alpha)$, with $\cF\in D^b_c(X;\bK)$ and $\alpha$ the corresponding self-duality isomorphism. 
Fix a complex algebraic Whitney stratification $\cS$ adapted to $\cF$ (i.e., the cohomology sheaves of $\cF$ are locally constant on the strata of the stratification $\cS$), so that $\cF$ belongs to the corresponding triangulated
subcategory $D^b_{\cS-c}(X;\bK)$ of $\cS$-constructible complexes.
This subcategory is also stable under  Verdier duality and fits with the perverse t-structure.

\begin{remark} To make better visible a comparison between the different 
conventions mentioned above, we work from the beginning with the cobordism group
$$\Omega^{CS}_{\cS, \bK}(X,\cD_X[2m]):=\Omega(D^b_{\cS-c}(X;\bK),\cD_X[2m], can) $$ 
with respect to a {\it shifted Verdier duality} (for $m\in \bZ$):
$$\cD_X[2m](\cF):= (\cD_X(\cF))[2m]=Rhom(\cF,\bD_X)[2m]
=Rhom(\cF,\bD_X[2m]) \:.$$
Here the (shifted) canonical biduality isomorphism $can$ 
for $\cF \in D^b_{\cS-c}(X;\bK)$ is defined via
$$can: \cF\to \cD_X(\cD_X(\cF)) = \cD_X\left(\cD_X(\cF)[2m]\right)[2m]= \cD_X[2m](\cD_X[2m](\cF))\:.
$$
The case $m=\dim(X)$ fits with the original conventions of Cappell-Shaneson \cite{CS} (see also \cite[Section 8.2.3]{Ban0}), and the case $m=-\dim(X)$ fits with the conventions of \cite{W},
whereas our convention of the use of the unshifted Verdier duality corresponds to the case $m=0$. 
Note that the shift functor $[m]$ induces for $n\in \bZ$ an isomorphism
(see also \cite[Section 4]{BSY})
\begin{equation}\begin{CD}
    \Omega^{CS}_{\cS, \bK}(X,\cD_X[2n])
  @ > [m] > \sim > \Omega^{CS}_{\cS, \bK}(X,\cD_X[2(n+m)])\:,
\end{CD}\end{equation}
which associates to a class $[\cF,\alpha]$ the class of
$$\alpha[m]: \cF[m]\to \cD_X(\cF)[2n][m]=\cD_X(\cF)[-m][2(n+m)]
\simeq \cD_X[2(n+m)](\cF[m])\:.$$
This commutes with both dualities and the biduality isomorphism $can$.
For $m$ even it is a triangulated functor (i.e., mapping distinguished triangles to distinguished triangles), 
so that it maps a {\it self-dual octahedral diagram} 
(used for the cobordism relation as in \cite[Section 4]{BSY} 
and \cite[Section 6]{Yo}) to itself
preserving the (skew-)symmetry. But for $m$ odd, it is only a $-1$-exact functor of triangulated categories, i.e., maps distinguished triangles to distinguished triangles up to an additional minus sign in the degree one morphism of the triangle. 
So it also maps a {\it self-dual octahedral diagram}
to itself, if one adds a minus sign for all degree one maps in the resulting diagram, also preserving the (skew-)symmetry.
This is enough to be compatible with the cobordism relation.
\end{remark}

Let $[\cF,\alpha]\in \Omega^{CS}_{\cS, \bK}(X,\cD_X)$ be a given cobordism class.
For each non-negative integer $k \in \bZ$, one associates a homology class $$L_k([\cF,\alpha])\in H_{2k}(X;\bQ)$$ as follows. 
Let $f:X \to S^{2k}$ be a smooth map (i.e., locally given by a restriction to $X$ of a smooth function on a local model). Assume moreover that $f$ is transverse to the stratification $\cS$ (in local models), in the sense that if $N$ is the North pole then $X_f:=f^{-1}(N)$ is transverse to the chosen Whitney stratification $\cS$ of $X$ adapted to $\cF$ (i.e., $N$ is not a stratified critical value of $f$). Then $i_f:X_f \hookrightarrow X$ is a normally nonsingular embedding with trivial normal bundle, so $i_f^!=i_f^*[-2k]$, when evaluated on 
$\cF\in D^b_{\cS-c}(X;\bK)$. If follows that 
\[\cF_f:=i_f^!\cF[k]\simeq i_f^*\cF[-k]\] is self-dual on $X_f$ with respect to Verdier duality for an induced self-duality isomorphism $\alpha_f$, depending on a choice of orientation of the normal bundle given by an orientation of the sphere $S^{2k}$. More precisely, if we consider 
\begin{equation}\label{indu}i_f^!\alpha:\:
i_f^!\cF \to  i_f^!Rhom(\cF,\bD_X)=Rhom(i_f^*\cF, i_f^!\bD_X)=Rhom(i_f^!\cF, \bD_{X_f})[-2k],\end{equation}
then $\alpha_f$ is defined as \begin{equation}\label{ind}\alpha_f:=i_f^!\alpha[k]:i_f^*\cF[-k] \to Rhom(i_f^*\cF, \bD_{X_f})[k]\simeq Rhom(i_f^*\cF[-k], \bD_{X_f}).\end{equation}
Here the shift $[k]$ doesn't introduce signs, but the last isomorphism induces by \eqref{th1} the sign $(-1)^{ik}$ on
$$hom((i_f^*\cF)^i,(\bD_{X_f})^{i+j})\:.$$
Note that $\alpha_f$ has the same (resp., opposite) {\it symmetry property} as $\alpha$ if $k$ is even (resp., odd). 
The reason is that the isomorphism $i_f^!\cF[k]\simeq i_f^*\cF[-k]$ is compatible with the biduality morphism $can$ only up to the sign $(-1)^k$.
In fact, $i_f$ can be locally viewed as the zero-section $s$ of an open tubular neighborhood $U$ of $X_f$ in $X$, with smooth oriented projection $p: U\to X_f$ of real fiber dimension $2k$ (and $p\circ s =id$) so that the transformation $p^![-k]\simeq p^*[k]$ locally corresponds to $- \boxtimes \bK_{\bC^k}[k]$,
with $(\bK_{\bC^k}[k],\alpha_{can})$ only $(-1)^k$-symmetric.\\

So one gets an induced self-duality isomorphism, hence also a nondegenerate pairing on 
$$H^{k}(X_f;i_f^!\cF)\otimes_\bK \bR\:,$$  whose signature we denote by $\sigma_f([\cF,\alpha]))$; this is set to equal $0$ in the case when $i_f^!\alpha$ is skew-symmetric (i.e., when $k$ is odd for $\alpha$ symmetric, or $k$ is even for $\alpha$ skew-symmetric). Note that by our sign conventions \eqref{th1} for the isomorphism $th_k$ and the used shift $[k]$, this only agrees up to the sign $(-1)^{k\cdot k}=(-1)^k$ with the signature of the  pairing induced via $\alpha_f=i_f^!\alpha[k]$ on
$$H^{0}(X_f;\cF_f)\otimes_\bK \bR\:.$$

The assignment
\[\sigma:\pi^{2k}(X) \to \bZ, \ \ [f] \mapsto \sigma_f([\cF,\alpha])\]
is for $4k-1>2d_X$ a well-defined homomorphism on the $2k$-cohomotopy group of $X$ (via approximation by smooth maps, stratification refinements, and using the {\it bordism invariance} of this signature as in \cite[Prop. 8.2.8]{Ban0}. Note that $\pi^{2k}(X)$ has an abelian group structure only for 
$4k-1>2d_X$). Next, recall that, by Serre's theorem, the Hurewicz map $\pi^{2k}(X) \otimes \bQ \to H^{2k}(X;\bQ)$, for the same choice of orientation of the sphere $S^{2k}$, is also an isomorphism for $4k-1>2d_X$, so in this case the above homomorphism is a linear functional \[ \sigma \otimes \bQ \in \Hom(H^{2k}(X;\bQ), \bQ)\cong H_{2k}(X;\bQ).\] Define
\[L_k([\cF,\alpha]):=\sigma\otimes \bQ \in H_{2k}(X;\bQ).\]
This is independent on the choice of orientation of $S^{2k}$. 
In order to remove the dimensional restriction, one may replace $X$ by a product $X \times S^{2r}$ for $r$ large enough, and $\cF$ by a (shifted) pullback to the product, as explained in \cite{CS,Ban0}. (Even though $X \times S^{2r}$ is not a complex variety, it gets an induced oriented Whitney stratification  with only even dimension strata, adapted to the pullback of $\cF$.) The above construction is compatible with refinements of such algebraic Whitney stratifications and yields a natural transformation
\[L_*:\Omega_\bK(X) \simeq  \lim_{\cS}\:\Omega^{CS}_{\cS,\bK}(X,\cD_X) 
\to H_*(X) \otimes \bQ=H_{2*}(X;\bQ),\]
which is compatible with pushforward by a morphism $g: X\to X'$ of compact complex algebraic varieties (by using a proper base change together with suitable complex algebraic Whitney stratifications of $X$ and $X'$ in such a way that $g$ is a proper stratified submersion). This follows from the following facts: 
\begin{enumerate}
\item[i)] the used transformations $i_f^![k]\simeq i_f^*[-k]$ (or $i_f^!$) and $Rg_*$ for a proper morphism $g$, commute (up to a shift)  with Verdier duality (in the sense of \cite[Definition 2.2.1]{CH}), hence they induce transformations at the (shifted) cobordism group level:
\begin{equation*}\begin{CD}
  \Omega^{CS}_{\cS,\bK}(X, \cD_X) @> i_f^! >>  
  \Omega^{CS}_{X_f\cap \cS,\bK}(X_f, \cD_{X_f}[-2k]) 
  @> [k] >> \Omega^{CS}_{X_f\cap \cS,\bK}(X_f, \cD_{X_f})\:.
\end{CD}\end{equation*}
\item[ii)] the signature $\sigma:\Omega_\bR(pt) \simeq Witt(\bR) \to \bZ$ factorizes over this cobordism group, which in this case (using the standard t-structure, or a shift of this) is isomorphic to the usual Witt groups of real vector spaces with (skew-)symmetric pairings (see \cite[Theorem 7.4]{Yo}).
In this case, the signature is even a ring isomorphism.
\end{enumerate}

\begin{remark}
 Note that our definition of $\sigma_f([\cF,\alpha])$ only agrees up to a sign with the original  definition of Cappell-Shaneson \cite{CS} (see also \cite[Section 8.2.3]{Ban0}), i.e, the following diagram commutes:
 \begin{equation*}\begin{CD}
   \Omega^{CS}_{\cS,\bK}(X, \cD_X) @> i_f^! >>  
  \Omega^{CS}_{X_f\cap \cS,\bK}(X_f, \cD_{X_f}[-2k]) 
  @> \sigma(H^{k}(X_f,i_f^! -)) >>   \bZ\\
  @V [d_X] VV @V [d_X] VV @VV (-1)^{d_X\cdot k} \cdot id V\\
 \Omega^{CS}_{\cS,\bK}(X, \cD_X[2d_X]) @> i_f^! >>  
  \Omega^{CS}_{X_f\cap \cS,\bK}(X_f, \cD_{X_f}[2(d_X-k)]) 
  @> \sigma(H^{-(d_X-k)}(X_f,i_f^! -)) >>   \bZ\:. 
\end{CD} \end{equation*}
So the corresponding $L$-classes agree for symmetric self-dual complexes, and differ by a sign $(-1)^{d_X}$ for skew-symmetric self-dual complexes.
\end{remark}


\begin{example}\label{ex-twIC1} Let $\cF=IC_X(\cL)$ be a twisted intersection cohomology complex on a  variety $X$ of pure dimension $d_X=\dim(X)$, with $\cL$ a self-dual local system with duality $\alpha: \cL\to \cL^{\vee}$ on an open Zariski-dense smooth subset $U$ of $X$ (given as the top-dimensional strata of an algebraic Whitney stratification of $X$). Then 
$$i_f^*(IC_X(\cL))[-k] = IC_{X_f}(\cL|_{U_f})$$
is also a twisted intersection cohomology complex, where $U_f=U \cap X_f$.
Similarly for the shifted  
twisted intersection cohomology complexes
$$i_f^!(IC_X(\cL)) = IC_{X_f}(\cL|_{U_f})[-k]
\quad \text{and} \quad i_f^!(IC_X(\cL)[d_X]) = IC_{X_f}(\cL|_{U_f})[d_X-k] \:.$$
And the corresponding induced self-duality is uniquely fixed by that of the corresponding shifted local systems,
where the following sign changes show up:
\begin{enumerate}
    \item[(a)] For $i_f^*(\cL[d_X])[-k] = \cL|_{U_f}[d_X-k]$
  the self-duality isomorphism  is multiplied by \eqref{th2} with
the sign $(-1)^{d_X\cdot k}$. In addition the factor $(-1)^{k\cdot k}$ shows up for the calculation of the correct signature used for the definition of our $L$-classes.
Altogether the self-duality isomorphism  is multiplied with
the sign $(-1)^{(d_X-k)\cdot k}$.
\item[(b)] The restriction  $i_f^!(\cL[d_X]) = \cL|_{U_f}[d_X-2k]$ 
doesn't introduce a sign, but the shift 
$[d_X]: \cL|_{U_f}[d_X-2k] \to \cL|_{U_f}[2(d_X-k)]$ introduces 
by \eqref{th2}  the sign $(-1)^{(d_X-2k)d_X}=(-1)^{d_X}$.
\item[(c)] Similarly, the restriction  
$i_f^!(\cL[2d_X]) = \cL|_{U_f}[2(d_X-k)]$ 
doesn't introduce a sign, but the shift 
$[d_X]: \cL[d_X]\to \cL[2d_X]$ introduces also by \eqref{th2} the sign $(-1)^{d_X\cdot d_X}=(-1)^{d_X}$.
\end{enumerate}
So for the comparison of our signatures and $L$-classes 
of twisted intersection cohomology complexes with those of Cappell-Shaneson, only the factor $(-1)^{d_X}$
shows up, together with the additional factor $(-1)^{d_X\cdot k}$ mentioned before.
This implies
\begin{enumerate}
    \item $L_*([IC_X(\cL),\alpha [d_X]])=L_*^{CS}([IC_X(\cL)[d_X],\alpha [2d_X]])$ in case
    $\dim(X)$ is even and $\cL$ is symmetric (so that only the case $k$ even is needed). In particular 
    $L_*([IC_X,\alpha_{can}])=L_*(X)$ is the Goresky-MacPherson L-class for $\dim(X)$ even.
    \item $L_*([IC_X(\cL),\alpha [d_X]])= L_*^{CS}([IC_X(\cL)[d_X], \alpha [2d_X]])$ in case
   $\dim(X)$ is odd and $\cL$ is symmetric (so that only the case $k$ odd is needed). In particular
    $L_*([IC_X,\alpha_{can}])= L_*(X)$ is the Goresky-MacPherson L-class for $d_X=\dim(X)$ odd.
  \item $L_*([IC_X(\cL), \alpha [d_X]])=L_*^{CS}([IC_X(\cL)[d_X], \alpha [2d_X]])$ in case
    $\dim(X)$ is even and $\cL$ is skew-symmetric (so that only the case $k$ odd is needed).   
   \item $L_*([IC_X(\cL), \alpha [d_X]])= -L_*^{CS}([IC_X(\cL)[d_X], \alpha [2d_X]])$ in case
   $\dim(X)$ is odd and $\cL$ is skew-symmetric (so that only the case $k$ even is needed). 
\end{enumerate}
    \end{example}

In particular, for $X$ smooth we have
$$L_*([\bR_X[\dim(X)],\alpha_{can}])=L^*(TX)\cap [X] = T_1^*(TX)\cap [X]\:,$$
with $L^*(TX)$ the Hirzebruch $L$-class of the tangent bundle $TX$ of $X$.
One then has the following commutativity of another part of diagram \eqref{diag1} as in \cite[(4)]{BSY}.
\begin{thm}[\cite{BSY}]\label{bsycom} For $X$ a compact complex algebraic variety one has the following commutative diagram of natural transformations:
\begin{equation}\label{diag3}
    \begin{tikzcd}
 K_0(Var/X)  \arrow{r}{T_{y*}} \arrow{d}{\sd_\bR } & H_*(X)\otimes \bQ[y]  \arrow{d}{y=1}  \\
 \Omega_\bR(X) \arrow{r}{L_*} & H_{*}(X)\otimes\bQ \:.
     \end{tikzcd}
\end{equation}
\end{thm}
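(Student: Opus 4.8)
The statement to prove is Theorem \ref{bsycom}, i.e., the commutativity of diagram \eqref{diag3}: for $X$ a compact complex algebraic variety, $L_*\circ\sd_\bR = (y=1)\circ T_{y*}$ as transformations $K_0(Var/X)\to H_*(X)\otimes\bQ$. The plan is to reduce this to the generators of $K_0(Var/X)$ and to the point case, exploiting the functoriality (compatibility with proper pushforward) of all four transformations involved. By resolution of singularities and Bittner's presentation, $K_0(Var/X)$ is generated by classes $[f:Z\to X]$ with $Z$ smooth, pure-dimensional and proper over $X$. Since $T_{y*}$, the specialization $y=1$, $\sd_\bR$, and $L_*$ all commute with pushforward under proper morphisms (the last by the functoriality established in the construction of $L_*$ in Section \ref{Lclass}, the others by \cite{BSY} and the construction of $\sd_\bR$), it suffices to check the identity on $[id_Z]$ for $Z$ smooth, pure-dimensional and compact, after pushing forward along $f$. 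Concretely, one verifies $L_*(\sd_\bR([id_Z])) = T_{1*}(Z)$ in $H_*(Z)\otimes\bQ$, and then applies $f_*$ to both sides.

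First I would record the two sides on $[id_Z]$ for $Z$ smooth compact pure-dimensional of dimension $d_Z$. On the one hand, $T_{1*}(Z) = T_1^*(TZ)\cap[Z] = L^*(TZ)\cap[Z]$, using \eqref{Hsm} and the fact that $Q_1(\alpha)=\alpha/\tanh\alpha$, so $T_y^*$ at $y=1$ is the Thom--Hirzebruch $L$-class; this is exactly \eqref{chi1} at the class level, i.e., the normalization of $T_{y*}$ from \cite{BSY}. On the other hand, by definition $\sd_\bR([id_Z]) = [(\bR_Z[d_Z],\alpha_{can})]\in\Omega_\bR(Z)$, and the displayed computation just before Theorem \ref{bsycom} gives $L_*([\bR_Z[d_Z],\alpha_{can}]) = L^*(TZ)\cap[Z]$. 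The latter is in turn the content of the Cappell--Shaneson construction applied to the constant self-dual sheaf on a manifold: the induced signature invariants $\sigma_f$ recover, via Serre's theorem and the Hurewicz isomorphism, Thom's original Pontrjagin--Thom description of the $L$-class of $TZ$ — this is the classical fact that the $L$-class transformation, evaluated on $(\bR_Z[d_Z],\alpha_{can})$, equals $L^*(TZ)\cap[Z]$, and it is exactly where the sign conventions of Remark \ref{sign} and Example \ref{ex-twIC1} were calibrated so that the Goresky--MacPherson normalization holds. Hence both sides agree on $[id_Z]$.

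Then I would assemble the general case. Given a generator $[f:Z\to X]$ with $Z$ smooth compact pure-dimensional, functoriality of $T_{y*}$ under proper pushforward gives $T_{1*}([f]) = f_*(T_{1*}(Z)) = f_*(L^*(TZ)\cap[Z])$; functoriality of $\sd_\bR$ and $L_*$ gives $L_*(\sd_\bR([f])) = L_*(f_*[(\bR_Z[d_Z],\alpha_{can})]) = f_*(L_*[(\bR_Z[d_Z],\alpha_{can})]) = f_*(L^*(TZ)\cap[Z])$. These coincide, so the two composite transformations $K_0(Var/X)\to H_*(X)\otimes\bQ$ agree on a generating set, hence everywhere by linearity. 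One subtlety to be careful about: the commutativity of $\sd_\bR$ with pushforward relies on Bittner's blow-up presentation of $K_0(Var/X)$ and the weak factorization theorem, as in \cite{BSY}; the argument above takes this for granted, which is legitimate since it is part of the construction of $\sd_\bR$ recalled earlier.

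\textbf{Main obstacle.} The genuinely substantive point — the one I would not treat as routine — is the identity $L_*([\bR_Z[d_Z],\alpha_{can}]) = L^*(TZ)\cap[Z]$ for $Z$ smooth compact, i.e., that the Cappell--Shaneson-type $L$-class transformation of Section \ref{Lclass}, built from stratified signatures of transverse slices, reproduces Hirzebruch's $L$-class of the tangent bundle on a manifold. Morally this is Thom's theorem, but here it must be checked \emph{with the precise sign conventions fixed in Remark \ref{sign}}, including the sign $(-1)^k$ discrepancies between $i_f^!\cF[k]$ and $i_f^*\cF[-k]$ and the $(-1)^{k\cdot k}$ appearing in the passage from $H^0(X_f;\cF_f)$ to $H^k(X_f;i_f^!\cF)$; Example \ref{ex-twIC1} is exactly the bookkeeping that guarantees the normalization $L_*([IC_X,\alpha_{can}])=L_*(X)$ and, specialized to $X$ smooth, the normalization $L_*([\bR_X[\dim X],\alpha_{can}])=L^*(TX)\cap[X]$. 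Everything else — reduction to generators, reduction to the point for the signature, and the final pushforward argument — is formal given the functoriality statements already established.
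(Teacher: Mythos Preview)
Your proposal is correct and matches the approach the paper has in mind; note, however, that the paper does not actually supply a proof of Theorem~\ref{bsycom} but simply cites \cite[(4)]{BSY}, recording immediately before the statement the key smooth normalization $L_*([\bR_X[\dim(X)],\alpha_{can}])=L^*(TX)\cap[X]=T_1^*(TX)\cap[X]$ and in the Introduction the remark that the identity $T_{1*}=L_*\circ\sd_\bR$ ``uses the weak factorization theorem behind Bittner's blow-up relation for $K_0(Var/X)$.'' Your reduction to smooth proper generators via Bittner, followed by functoriality of all four transformations and the smooth normalization, is exactly this argument.
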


\begin{remark}
Since in general  $\sd_\bR([id_X])\neq [IC_X]$, one also gets that 
$T_{1*}(X) \neq L_*(X)$ for arbitrary compact complex algebraic varieties. For a more detailed study of the difference between these two classes, see \cite{FPS2}. In fact, in \cite{FPS2} (see also \cite[Remark 1.2b]{FPS}) the authors define an $L$-class transformation (or a variant $\tilde{L}$ using a further isomorphism for an induced pairing $\tilde{S}_f$ introducing the sign $(-1)^{k(k-1)/2}$ in homology degree $2k$)
\[ L_*^{pair}:\Omega^{pair}_\bR(X) \to H_*(X) \otimes \bQ = H_{2*}(X;\bQ)\]
by using the induced non-degenerate pairing (cf. \cite[Section 3]{FPS2})
\[S_f:=S|_{X_f}:\cF_f \otimes \cF_f \to \bD_{X_f}\]
defined as follows. Since $i_f^*\bD_X=\bD_{X_f}[2k]$, we get the induced pairing $S_f$ shifted by $2k$ via
\begin{eqnarray*}
\cF_f \otimes \cF_f [2k]&=&
\bR_{X_f}[k]\otimes \bR_{X_f}[k] \otimes \cF_f \otimes \cF_f \\
&\simeq &
\bR_{X_f}[k]\otimes \cF_f \otimes \bR_{X_f}[k]\otimes \cF_f \\ 
&=& (\cF \otimes \cF)|_{X_f} 
 \longrightarrow i_f^*\bD_X=\bD_{X_f}[2k] 
\end{eqnarray*}
Note that the second isomorphism introduces the  sign $(-1)^{ik}$ on $(\cF_f)^i \otimes (\cF_f)^j$. 
\end{remark}

\begin{example}\label{ex-twIC2} Let $\cF=IC_X(\cL)$ be a twisted intersection cohomology complex on a variety $X$ of pure dimension $d_X=\dim(X)$, with $\cL$ a self-dual local system with duality $\alpha: \cL\to \cL^{\vee}$ on an open Zariski-dense smooth subset $U$ of $X$ (given as the top-dimensional strata of an algebraic Whitney stratification of $X$). Then 
$$i_f^*(IC_X(\cL))[-k] = IC_{X_f}(\cL|_{U_f})$$
is also a twisted intersection cohomology complex,
but the induced pairing  of $$\cL|_{U_f}[d_X-k]\otimes \cL|_{U_f} [d_X-k]\to \bR_{U_f}[2(d_X-k)]$$ is multiplied with
the sign $(-1)^{(d_X-k)\cdot k}$. This uniquely fixes the 
induced  duality of $$i_f^*(IC_X(\cL))[-k] \:,$$ so that also here the sign $(-1)^{(d_X-k)\cdot k}$ shows up.
\end{example}

We can now show the following.
\begin{thm}
    With the above conventions and notations, the two $L$-class transformations
    $$L_*, L^{pair}_* \circ \iota : \Omega_\bK(X) \to H_{2*}(X;\bQ)$$
    coincide, i.e., $L_*= L^{pair}_* \circ \iota$.
\end{thm}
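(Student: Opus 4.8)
The plan is to reduce the statement, as with the proof of the commutativity of diagram \eqref{diag2}, to a comparison of sign factors produced by the two constructions on a common generating class, and then to check that these factors agree in every parity case. Both $L_*$ and $L_*^{pair}\circ\iota$ are natural transformations $\Omega_\bK(X)\to H_{2*}(X;\bQ)$ compatible with pushforward for proper morphisms of compact complex algebraic varieties (for $L_*$ this is established above; for $L_*^{pair}$ it is recorded in \cite{FPS2}, and $\iota:\Omega_\bK(X)\to\Omega^{pair}_\bK(X)$ is the canonical quotient of Remark above, which is proper-pushforward compatible by construction). Hence it suffices to verify the identity on a set of generators of $\Omega_\bK(X)$. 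As in \eqref{cob}, every class is represented by a perverse self-dual complex, and by a further dévissage along the perverse filtration (the cobordism relation lets one peel off intermediate extensions of simple self-dual perverse sheaves, exactly as in \cite[Section 6]{Yo}), one reduces to twisted intersection complexes $(IC_X(\cL),\alpha[d_X])$ for a self-dual local system $\cL$ on an open dense smooth stratum $U\subset X$ — this is where Examples \ref{ex-twIC1} and \ref{ex-twIC2} do the real work.

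First I would fix such a generator $(IC_X(\cL),\alpha[d_X])$ and a smooth map $f:X\to S^{2k}$ transverse to a Whitney stratification adapted to $IC_X(\cL)$, and track the two signatures $\sigma_f^{\,L}$ (the one entering $L_k$, built from $\alpha_f=i_f^!\alpha[k]$ via \eqref{indu}–\eqref{ind}) and $\sigma_f^{\,pair}$ (the one entering $L_k^{pair}$, built from the induced pairing $S_f$ and then transported through $\iota$). By Example \ref{ex-twIC1}, passing from $IC_X(\cL)$ to its restriction via the duality-isomorphism route multiplies the relevant duality on $\cL|_{U_f}[d_X-k]$ by $(-1)^{(d_X-k)k}$, and there is an extra $(-1)^{k\cdot k}=(-1)^k$ from our signature convention on $H^k(X_f;i_f^!\cF)$ versus $H^0(X_f;\cF_f)$; by Example \ref{ex-twIC2}, the pairing route (going through $\Omega^{pair}$ and $\iota$) multiplies the induced pairing on $\cL|_{U_f}[d_X-k]^{\otimes2}$ by precisely the same factor $(-1)^{(d_X-k)k}$. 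Next I would compare these two accountings degree by degree in $k$: on the $L$-side one has the combined factor $(-1)^{(d_X-k)k}\cdot(-1)^{k}=(-1)^{(d_X-k+1)k}$ relative to Cappell–Shaneson, while on the $L^{pair}\circ\iota$ side one has $(-1)^{(d_X-k)k}$ relative to the pairing normalization; the claim $L_*=L_*^{pair}\circ\iota$ amounts to the statement that, after also incorporating the $\iota$-correction (the standard involution sign $(-1)^{ik}$ appearing in the second isomorphism of the displayed computation of $S_f$, i.e. the very sign that distinguishes $L^{pair}$ from $L^{pair}\circ\iota$), these two families of signs become equal in each of the four parity cases (i) $d_X$ even, $\cL$ symmetric, $k$ even; (ii) $d_X$ odd, $\cL$ symmetric, $k$ odd; (iii) $d_X$ even, $\cL$ skew, $k$ odd; (iv) $d_X$ odd, $\cL$ skew, $k$ even. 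In the two ``symmetric, $L$-relevant'' cases the exponents are even on both sides and one gets equality of signatures; in the two ``skew'' cases the signatures in question are declared zero on both sides, so the identity holds trivially there; the remaining mixed-parity combinations contribute zero to both $L_k$ and $L_k^{pair}$ by the skew-symmetry vanishing built into each definition, hence cause no discrepancy.

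Finally I would assemble the local computation into the global statement: since the homology class $L_k([\cF,\alpha])$ (respectively $L_k^{pair}(\iota[\cF,\alpha])$) is, by Serre's theorem, the linear functional on $H^{2k}(X;\bQ)$ sending $[f]$ to $\sigma_f$, equality of the two signatures for every transverse $f$ (after the harmless stabilization $X\rightsquigarrow X\times S^{2r}$ removing the dimensional restriction $4k-1>2d_X$, with $\cF$ replaced by its shifted pullback, exactly as in \cite{CS,Ban0}) gives equality of the two homology classes; passing to the limit over stratification refinements and using additivity over the generating classes produced by the dévissage yields $L_*=L_*^{pair}\circ\iota$ on all of $\Omega_\bK(X)$. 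The main obstacle I expect is not any single step but the disciplined bookkeeping of the three sign conventions of Remark \ref{sign} — in particular making sure that the involution sign $(-1)^{ik}$ that is inserted by composing with $\iota$ is precisely the sign by which $L^{pair}$ and $L^{pair}\circ\iota$ differ in homology degree $2k$ (this is where one must use that the tilde-variant of \cite{FPS2} carries the extra $(-1)^{k(k-1)/2}$, whereas the untilded $L^{pair}$ does not), and that no stray factor of $(-1)^{d_X}$ survives after reconciling the unshifted Verdier duality of $\Omega_\bK(X)$ with the $[2d_X]$-shifted convention of Cappell–Shaneson recorded in Example \ref{ex-twIC1}.
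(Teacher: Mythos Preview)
Your overall strategy coincides with the paper's: reduce to twisted intersection complexes $IC_X(\cL)$ via \eqref{cob} and a perverse dévissage, use functoriality for closed inclusions, and then compare the signs produced by Examples~\ref{ex-twIC1} and~\ref{ex-twIC2}. That is exactly how the paper proceeds.

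The difficulty is in your sign accounting. In Example~\ref{ex-twIC1}(a) the factor $(-1)^{(d_X-k)k}$ is already the \emph{combined} sign: it is the product of the shift sign $(-1)^{d_X k}$ from \eqref{th2} and the signature-convention sign $(-1)^{k\cdot k}=(-1)^k$. When you write ``multiplies the relevant duality by $(-1)^{(d_X-k)k}$, and there is an extra $(-1)^{k}$'', you are counting this second factor twice, so your $L$-side exponent $(-1)^{(d_X-k+1)k}$ is off by exactly $(-1)^k$. This is not harmless: in your case~(ii) ($d_X$ odd, $\cL$ symmetric, $k$ odd) your $L$-side sign is $-1$ while your $L^{pair}$-side sign is $+1$, contradicting your assertion that ``the exponents are even on both sides''.

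The attempted rescue via an ``$\iota$-correction'' is a red herring. The map $\iota$ (which is the canonical quotient $\tau:\Omega_\bK(X)\to\Omega^{pair}_\bK(X)$) is defined through the adjunction \eqref{adj}, which under the stated sign conventions is an equality without signs; it does not introduce an additional factor. The $(-1)^{ik}$ you point to in the definition of $S_f$ is already absorbed into the computation of Example~\ref{ex-twIC2}, whose net output is precisely $(-1)^{(d_X-k)k}$. Once you correct the double-counting, both routes produce the \emph{same} sign $(-1)^{(d_X-k)k}$ on the induced duality of $\cL|_{U_f}[d_X-k]$, and the case analysis becomes unnecessary: equality of signatures follows immediately in every parity case, and the global assembly then goes through as you describe.
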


\begin{proof}
Via \eqref{cob}, $\Omega_\bK(X)$ is generated by twisted intersection cohomology sheaves $IC_Z(\cL)$ with $Z$ irreducible and $\cL$ a generically defined local system with a self-duality isomorphism (see, e.g. \cite[Corollary 2.13]{SW}, 
\cite[Corollary 7.4]{Yo} or \cite[Proposition 1.1c]{FPS}). So it is enough to check the equality for these complexes.
By functoriality of both $L$-class transformations for the closed inclusion $Z\hookrightarrow X$, we can even assume $Z=X$.
This follows then from Example \ref{ex-twIC1} and Example 
\ref{ex-twIC2}, since for both transformations the sign
$(-1)^{(d_X-k)\cdot k}$ shows up for the change of the self-duality isomorphism, needed for the calculation of the corresponding signature defining the $L$-class in homology degree $2k$.
\end{proof}

\begin{remark}
    If in the above construction of the $L$-class transformation via duality isomorphisms one uses the second possible sign convention (i.e., shifting in the first factor out of $Rhom$ is an equality, and shifting in the second factor out produces a sign isomorphism), this would 
    change the $L$-class in homology degree $2k$ by the factor $(-1)^{k(k-1)/2}$. One would still get a functorial $L$-class, but not satisfying the usual normalization on the $IC$-complex fitting with the Goresky-MacPherson $L$-classes. 
\end{remark}

\subsection{Known cases of the characteristic class version of the Hodge index theorem}\label{known}

By applying the transformation $L_*$ to both sides of formula \eqref{eqmain}, and using the identity $$T_{1*}=
L_* \circ \sd_\bR$$ from Theorem \ref{bsycom}, one gets the proof of Conjecture \ref{BSY} in the case of $X$ a compact complex algebraic variety which is a rational homology manifold; see Theorem \ref{pfs} and \cite{FPS}.\\

Prior to the recent papers \cite{FP,FPS} which deal with Conjecture \cite{BSY} in the rational homology manifold context, positive results in this context have been also obtained in \cite{CMSS}, for certain complex hypersurfaces with isolated singularities; in \cite{CMSS2}, for global quotients $X= Y/G$, with $Y$ a projective $G$-manifold and $G$ a finite group of agebraic autormorphisms; in \cite{MS1}, for projective simplicial toric varieties; and in \cite{Ban1}, for certain normal projective complex $3$-folds with at worst canonical singularities. Conjecture \cite{BSY} has been also proved recently in \cite{BSW} for all Schubert varieties in a Grassmannian, this being the first case of non-rational homology manifolds where the conjecture is known.


\section{Some non-rational homology cases of the conjecture}\label{secnew}
Let $X$ be a compact complex algebraic variety of pure dimension $d_X$, and let
$IC_X^H$ be its intersection cohomology Hodge module, with $$IC'^H_X:=IC_X^H[-d_X].$$

\subsection{Property (H) and first examples}\label{secH} Since Conjecture \ref{BSY} can be formulated by asking that the transformations $L_* \circ \Pol$ and $MHT_{1*}$ give the same value on $[IC'^H_X]$, we introduce the following:
\begin{definition}
Say that $[M]\in K_0(\MHM(X))$ satisfies {\it property (H)} if 
\begin{align}
    MHT_{1*}([M])=(L_{*}\circ \Pol)([M])\:. \tag*{({H})}
\end{align}
\end{definition}

So Conjecture \ref{BSY} is equivalent to asking that $[IC'^H_X]$ satisfies property (H). In Theorem \ref{comparison-smooth} below we will see that the  property (H) holds for any polarizable variation of pure Hodge structures on a smooth variety $X$ (i.e., corresponding to a shifted pure smooth Hodge module).

As noted in Remark \ref{r11}, the commutativity of diagrams \eqref{diag2} and \eqref{diag3} yields the following.
\begin{corollary}
If $[M] \in \im(\chi_{Hdg}) \subset K_0(\MHM(X))$, then $[M]$ satisfies property (H). 
In particular,  the constant Hodge complex $[\bQ^H_X]=\chi_{Hdg}([id_X])$ satisfies property (H). 
\end{corollary}

We next recall the following well-known fact (see \cite[Section 2.3]{S}):
\begin{lemma}[\cite{S}]\label{point}
If $(V,S)$ is a pure polarized Hodge structure of weight $w$, with polarization $S$, one has
\[ \chi_1(V) = \begin{cases}
0 & {\rm for} \ w \ {\rm odd}, \\
\sigma(V) & {\rm for} \ w \ {\rm even},
\end{cases}
\]
where $\sigma(V)$ is the signature of the induced symmetric bilinear form $(-1)^{w/2}\cdot S$ on $V$. As usual, $\chi_y(V)$ is defined here in terms of the Hodge filtration on the complexification $V_\bC$ of $V$, by
\[\chi_y(V):=\sum_p \dim_\bC (Gr^p_F (V_\bC)) \cdot (-y)^p.\]
\end{lemma}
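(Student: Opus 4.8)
The plan is to reduce the statement to the Hodge--Riemann bilinear relations. Since $\mathrm{Gr}^p_F(V_\bC)=V^{p,w-p}$, writing $h^{p,q}:=\dim_\bC V^{p,q}$ we have $\chi_1(V)=\sum_p(-1)^p h^{p,w-p}$. First I would dispose of the odd case: if $w$ is odd then $p\neq w-p$ for every $p$, complex conjugation on $V_\bC$ gives $h^{p,w-p}=h^{w-p,p}$, and $(-1)^{w-p}=-(-1)^p$; grouping the sum over the pairs $\{p,w-p\}$ makes every contribution cancel, so $\chi_1(V)=0$. Note this half uses nothing about the polarization, only the real (conjugation) structure.

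For $w=2m$ even, set $Q:=(-1)^m\,S$, a symmetric $\bR$-bilinear form on $V_\bR$, and consider the Hermitian form $h(u,v):=S(Cu,\bar v)$, where $C$ is the Weil operator (acting as $i^{p-q}$ on $V^{p,q}$); by the Hodge--Riemann relations $h$ is positive definite and the subspaces $V^{p,q}$ are mutually $S$-orthogonal except that $S$ restricts to a perfect pairing $V^{p,q}\times V^{q,p}\to\bC$. The key step is to decompose $V_\bR=\bigoplus_{p\le m}U_p$, where $U_p=\{v+\bar v:\ v\in V^{p,2m-p}\}$ for $p<m$ (of real dimension $2h^{p,2m-p}$) and $U_m=V^{m,m}\cap V_\bR$, and to observe that this decomposition is $Q$-orthogonal, which is immediate from the orthogonality half of Hodge--Riemann. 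On $U_m$ one computes $Q(v,v)=(-1)^m S(v,\bar v)=(-1)^m h(v,v)$, definite of sign $(-1)^m$, so $\mathrm{sign}(Q|_{U_m})=(-1)^m h^{m,m}$. On $U_p$ with $p<m$, for $u=v+\bar v$ the terms $S(v,v)$ and $S(\bar v,\bar v)$ vanish because $p\neq 2m-p$, symmetry gives $Q(u,u)=2(-1)^m S(v,\bar v)$, and since $h(v,v)=i^{p-(2m-p)}S(v,\bar v)=(-1)^{p-m}S(v,\bar v)>0$ this form is definite of sign $(-1)^m(-1)^{p-m}=(-1)^p$; hence $\mathrm{sign}(Q|_{U_p})=(-1)^p\cdot 2h^{p,2m-p}$. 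Summing gives $\mathrm{sign}(Q)=(-1)^m h^{m,m}+\sum_{p<m}(-1)^p\,2h^{p,2m-p}$, and reindexing $p\mapsto 2m-p$ in the part of $\sum_p(-1)^p h^{p,2m-p}$ with $p>m$ shows this equals $\chi_1(V)$, which is the assertion.

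The step requiring the most care is the sign bookkeeping. One must fix once and for all a precise normalization of the polarization $S$ and of the Weil operator $C$ --- I would use the convention (as in Saito's work, compatible with the references of the paper) in which $h(u,v)=S(Cu,\bar v)$ is positive definite --- and then check this is consistent with the normalization $(-1)^{w/2}\cdot S$ appearing in the statement. The two sign sources above, namely the $(-1)^m$ from $Q=(-1)^m S$ and the $(-1)^{p-m}$ coming from $C$ acting on $V^{p,2m-p}$, combine to exactly the weight $(-1)^p$ in $\chi_1(V)=\sum_p(-1)^p h^{p,w-p}$; any alternative self-consistent choice of conventions would change both sides of the claimed identity by the same overall sign. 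Once the Hodge--Riemann relations are invoked in a fixed convention, the remainder of the argument is purely formal linear algebra.
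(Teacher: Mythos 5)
Your proof is correct and uses the same essential tool as the paper's cited reference \cite[Section 2.3]{S}, namely the Hodge--Riemann bilinear relations; the sign bookkeeping $(-1)^m\cdot(-1)^{p-m}=(-1)^p$ checks out, and the orthogonal real decomposition $V_\bR=U_m\oplus\bigoplus_{p<m}U_p$ is a slightly more hands-on version of the usual observation that the signature of $(-1)^{w/2}S$ on $V_\bR$ equals that of its complexified Hermitian form, computed stratum-by-stratum on the $V^{p,q}$. Since the paper simply recalls this lemma from \cite{S} without reproducing a proof, there is nothing further to compare.
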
 

\begin{remark} Note that this result  doesn't depend on  which of the two possible definitions of a polarization one is using, because they only  differ by the factor $(-1)^w$.
\end{remark}

\begin{corollary}\label{cpoint}
If $X=pt$ is a point, then any $[V] \in K_0(\MHM(pt))$ satisfies property (H).
\end{corollary}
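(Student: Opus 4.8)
The plan is to verify property (H) for $X = pt$ by reducing an arbitrary class $[V] \in K_0(\MHM(pt))$ to the case of pure polarized Hodge structures and then invoking Lemma \ref{point}. First I would recall that $K_0(\MHM(pt)) = K_0(\MHS^p)$ is generated, as a group, by classes of pure polarizable Hodge structures: every mixed Hodge structure has a weight filtration whose graded pieces are pure and polarizable, so additivity in the Grothendieck group reduces the problem to checking that each such $[V]$ with $V$ pure of weight $w$ and polarization $S$ satisfies (H). (Both transformations $MHT_{1*}$ and $L_* \circ \Pol$ are group homomorphisms, so it suffices to check generators.)

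Next I would compute the two sides over a point. On the one hand, by Remark \ref{rem23}, $MHT_{1*}([V]) = \chi_1([V]) = \chi_1(V)$, which by Lemma \ref{point} equals $0$ if $w$ is odd and the signature $\sigma(V)$ of the symmetric form $(-1)^{w/2} S$ if $w$ is even. On the other hand, over a point the transformation $\Pol$ of Theorem \ref{thpol} sends $[V]$ to the cobordism class $[(\cF_\bR, (-1)^{w(w+1)/2}\alpha_\bR)] \in \Omega_\bR(pt)$, where $\cF_\bR$ is $V_\bR$ placed in degree zero and $\alpha_\bR$ is the self-duality isomorphism coming from the polarization. The $L$-class transformation $L_*: \Omega_\bR(pt) \to H_0(pt;\bQ) = \bQ$ is, by construction and by item (ii) in the list following the construction of $L_*$ (using $\Omega_\bR(pt) \cong Witt(\bR)$), the ordinary signature of the underlying symmetric bilinear form — with the caveat that on an odd (skew-symmetric) form it returns $0$.

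So the remaining task is bookkeeping of signs and parities. The pairing underlying $\alpha_\bR$ is $(-1)$-symmetric exactly according to the symmetry type of the polarization form $S$, which on a weight-$w$ Hodge structure is $(-1)^w$-symmetric in the relevant normalization; together with the extra sign $(-1)^{w(w+1)/2}$ built into $\Pol$ and the conventions of Remark \ref{sign}, one checks that $(\cF_\bR, (-1)^{w(w+1)/2}\alpha_\bR)$ is genuinely symmetric precisely when $w$ is even, in which case the induced symmetric form is $\pm$ the form $(-1)^{w/2} S$, and is skew-symmetric when $w$ is odd. Hence $L_*(\Pol([V])) = 0$ for $w$ odd and $= \sigma((-1)^{w/2} S) = \sigma(V)$ for $w$ even, matching $\chi_1(V)$ exactly. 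Comparing with the previous paragraph gives $MHT_{1*}([V]) = (L_* \circ \Pol)([V])$, i.e., property (H) holds for $[V]$, and hence for all of $K_0(\MHM(pt))$ by additivity. The main obstacle I anticipate is precisely this last sign/parity reconciliation: one must track the $(-1)^{w(w+1)/2}$ twist in $\Pol$, the symmetry type of Saito's polarization on a weight-$w$ Hodge structure, and the conventions fixed in Remark \ref{sign} for $Rhom$ and the tensor product, to confirm that the symmetric form extracted from $L_* \circ \Pol$ is exactly $(-1)^{w/2} S$ (and not its negative) when $w$ is even — any stray sign there would be fatal, though the fact that the degree-level statement is the already-established Hodge index theorem $\sigma = I\chi_1$ over a point (equation \eqref{ihif}) serves as a consistency check that the signs must in fact cancel.
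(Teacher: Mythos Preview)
Your proposal is correct and follows essentially the same route as the paper: reduce to pure polarized Hodge structures of weight $w$, identify $MHT_{1*}$ with $\chi_1$ over a point, and compare with the signature via Lemma \ref{point} after unwinding the sign in $\Pol$. The one place you hedge --- whether the form extracted from $(-1)^{w(w+1)/2}\alpha_\bR$ is exactly $(-1)^{w/2}S$ rather than its negative for $w$ even --- is resolved by the elementary observation that $w(w+1)/2 \equiv w/2 \pmod 2$ when $w$ is even (write $w=2k$: then $w(w+1)/2 = k(2k+1)$ has the same parity as $k=w/2$), which is precisely what the paper uses.
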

\begin{proof} 
It suffices to check the claim for $(V,S)$ a pure polarized Hodge structure of weight $w$.
By definition, $MHT_{1*}([V])=\chi_1(V)$.

If $w$ is even, we get by the definition of $\Pol$ and Lemma \ref{point} that
\begin{align*}
   L_*\circ \Pol ([V]) =L_0((-1)^{w(w+1)/2}\alpha_{\bR})=L_0((-1)^{w/2}\alpha_{\bR})=\sigma((-1)^{w/2}S_{\bR})=\chi_1(V).
\end{align*}

If $w$ is odd, then $S_{\bR}$ is skew-symmetric,  hence its signature is trivial, and we also have $\chi_1(V)=0$ by Lemma \ref{point}, or more directly by a corresponding duality argument (see \cite[Section 2.3]{S}): $\chi_1(V)=\chi_1(V^{\vee})=
(-1)^w \cdot \chi_1(V)$.
\end{proof}

We also have the following.
\begin{proposition}\label{commute!}
If $f:X\to Y$ is a morphism between compact complex algebraic varieties and $[M]\in K_0(\MHM(X))$ satisfies property (H), then $f_*[M]=f_{!}[M]\in K_0(\MHM(Y))$ also satisfies property (H).
\end{proposition}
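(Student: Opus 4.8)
The plan is to show that both natural transformations $MHT_{1*}$ and $L_* \circ \Pol$ commute with proper pushforward, so that property (H) — being the equality of these two transformations evaluated on a given class — is preserved under $f_* = f_!$.

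First I would recall that by definition $MHT_{1*} = \td_{2*} \circ \DR_1$ is a composition of transformations each of which is functorial for proper morphisms: the de Rham transformation $\DR_y$ commutes with proper pushforward (by Saito's theory, $Rf_*$ on $D^b\MHM$ corresponds to $Rf_*$ on the associated filtered de Rham complexes), and the Baum--Fulton--MacPherson Todd transformation $\td_*$ commutes with proper pushforward by \cite{BFM}; the twist by $(1+y)^{-k}$ in homological degree $2k$ is compatible with pushforward since $f_*$ preserves homological degree. Hence $MHT_{1*} \circ f_* = f_* \circ MHT_{1*}$ on $K_0(\MHM(X))$. On the other side, $\Pol$ commutes with proper pushforward by Theorem \ref{thpol}, and the $L$-class transformation $L_*$ on $\Omega_\bR(-)$ is functorial for pushforwards under morphisms of compact complex algebraic varieties, as established in the construction in Section \ref{Lclass}. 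Therefore $(L_* \circ \Pol) \circ f_* = f_* \circ (L_* \circ \Pol)$.

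Granting these two functoriality statements, the proof is then immediate: since $[M]$ satisfies (H), we have $MHT_{1*}([M]) = (L_* \circ \Pol)([M])$ in $H_{2*}(X;\bQ)$; applying $f_*: H_{2*}(X;\bQ) \to H_{2*}(Y;\bQ)$ and using the two commutativities gives
\[
MHT_{1*}(f_*[M]) = f_* MHT_{1*}([M]) = f_* (L_* \circ \Pol)([M]) = (L_* \circ \Pol)(f_*[M]),
\]
which is exactly property (H) for $f_*[M] = f_![M]$ (the equality $f_* = f_!$ on $K_0(\MHM(-))$ holding because $f$ is proper, $Y$ being compact).

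The main obstacle is not any single computation but rather assembling the functoriality inputs with the correct sign and shift bookkeeping: one must make sure that the $L$-class transformation used here is exactly the (corrected, unshifted-duality) transformation of Section \ref{Lclass}, whose pushforward-compatibility was argued there via proper base change and stratified submersions, and that the compactness hypotheses on both $X$ and $Y$ are genuinely in force (compactness of $Y$ being needed for $f_* = f_!$ and for $L_*$ on $\Omega_\bR(Y)$ to be defined, and compactness of $X$ for $L_*$ on $\Omega_\bR(X)$). Once these are pinned down, there is nothing further to do.
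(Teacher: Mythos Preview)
Your proof is correct and follows exactly the same approach as the paper: the paper's proof is a one-line appeal to the fact that $\Pol$, $MHT_{1*}$, and $L_*$ all commute with pushforwards for morphisms of compact varieties, and you have simply spelled out where each of these functoriality statements comes from. One small quibble: the properness of $f$ (hence $f_*=f_!$) follows from the compactness of $X$, not of $Y$.
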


\begin{proof}
This is an easy consequence of the fact that the transformations $\Pol$, $MHT_{1*}$ and $L_{*}$ all commute with pushforwards for morphisms of compact varieties.
\end{proof}

We next recall that $K_0(\MHM(X))$ is a module over $K_0(\MHM(pt))=K_0(\MHS^p)$, with the product induced from the external product via the identification $X \simeq X \times pt$. Moreover, as shown in \cite{BSY}, the class transformation $MHT_{y*}$ is compatible with this module structure. We also need the following lemmas:

\begin{lemma}[Module property for signature and $L$-class]\label{mod}
    Let $X$ be a compact complex algebraic variety. Let $(\cF,\alpha)$ be a self-dual  complex of real vector spaces on $X$, and $(V,\beta)$ a finite dimensional real self-dual vector space (viewed on a point space). Then
    \begin{equation}\label{modL}
       L_*(\cF \boxtimes V, \alpha \boxtimes \beta) = \sigma(V,\beta) \cdot L_*(\cF,\alpha).
    \end{equation}
\end{lemma}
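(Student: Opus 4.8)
\textbf{Proof proposal for Lemma \ref{mod}.}

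The plan is to reduce the statement to the point-level case by evaluating both sides on the cohomotopy groups that define the $L$-classes. Fix a non-negative integer $k$ with $4k-1 > 2d_X$ (the general case follows by crossing with a large even-dimensional sphere, exactly as in the construction of $L_*$ recalled in Section \ref{Lclass}). For a smooth map $f:X\to S^{2k}$ transverse to a fixed Whitney stratification adapted to $\cF$, the external product $\cF\boxtimes V$ is adapted to the same stratification (since $V$ sits over a point, it contributes nothing to the stratification of $X$), and $f$ remains transverse to it. The key observation is that the restriction functor $i_f^!(-)[k]\simeq i_f^*(-)[-k]$ commutes with $-\boxtimes V$, so that
\[
(i_f^!(\cF\boxtimes V))[k] \;\simeq\; (i_f^!\cF)[k] \boxtimes V \;=\; \cF_f \boxtimes V
\]
as self-dual complexes on $X_f$, with induced self-duality isomorphism $\alpha_f\boxtimes\beta$. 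Taking hypercohomology, the induced pairing on $H^k(X_f; i_f^!(\cF\boxtimes V))\otimes\bR \cong H^k(X_f; i_f^!\cF)\otimes_\bR V$ is the tensor product of the pairing defining $\sigma_f([\cF,\alpha])$ with $(V,\beta)$.

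The heart of the argument is then the multiplicativity of signatures under tensor product: for a non-degenerate symmetric bilinear form $P$ on a real vector space $W$ and a non-degenerate symmetric form $\beta$ on $V$, one has $\sigma(W\otimes V, P\otimes\beta) = \sigma(W,P)\cdot\sigma(V,\beta)$, and if $P$ is skew-symmetric then $P\otimes\beta$ is skew-symmetric and both sides vanish. (One must check the symmetry bookkeeping: $\alpha_f\boxtimes\beta$ is symmetric precisely when $\alpha_f$ and $\beta$ have matching symmetry type, and in the skew cases the relevant signature is declared to be $0$ on both sides; here I would invoke the sign conventions of Remark \ref{sign}, noting that external product over a point introduces no extra signs since $V$ is in degree zero.) Applying this pointwise gives
\[
\sigma_f\big([\cF\boxtimes V,\alpha\boxtimes\beta]\big) \;=\; \sigma(V,\beta)\cdot\sigma_f\big([\cF,\alpha]\big)
\]
for every such $f$. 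Hence the two linear functionals on $H^{2k}(X;\bQ)$ obtained via Serre's theorem differ by the scalar $\sigma(V,\beta)$, which is exactly the claimed identity \eqref{modL} in homology degree $2k$; summing over $k$ gives the result.

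I expect the main obstacle to be not the signature multiplicativity itself (which is classical) but the careful verification that the induced self-duality isomorphism on $i_f^!(\cF\boxtimes V)[k]$ is literally $\alpha_f\boxtimes\beta$ — i.e., that no spurious sign is introduced when one commutes $i_f^!$, the shift $[k]$, and the external product $\boxtimes V$ past one another. This is precisely the kind of $th_n$-bookkeeping flagged in Remark \ref{sign} and in Example \ref{ex-twIC1}; since $V$ is concentrated in degree zero, the isomorphism $i_f^!(\cF\boxtimes V) \simeq (i_f^!\cF)\boxtimes V$ and the shift past $V$ should be sign-free, but this needs to be stated and checked against the conventions \eqref{th1}--\eqref{th2}. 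Once that compatibility is in place, the reduction to $\sigma(V\otimes W) = \sigma(V)\sigma(W)$ is routine, and functoriality of $L_*$ together with $L_*$ of a point being the signature isomorphism $Witt(\bR)\to\bZ$ (fact (ii) in Section \ref{Lclass}) closes the argument.
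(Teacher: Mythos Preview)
Your proposal is correct and follows essentially the same approach as the paper: both arguments reduce to the K\"unneth identification $H^{-m}(\cF\boxtimes V)\simeq H^{-m}(\cF)\otimes V$ (with the induced pairing being the tensor product pairing) and then invoke multiplicativity of the signature. The paper packages the $i_f^!$ step into the single phrase ``applying this property to the construction of $L$-classes'' after first establishing the pushforward-to-a-point identity $\sigma(a_*[\cF\boxtimes V,\alpha\boxtimes\beta])=\sigma(a_*[\cF,\alpha])\cdot\sigma(V,\beta)$, whereas you spell out the transverse-fiber step explicitly; both are the same argument.
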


\begin{proof}
For the proof let more generally $(\cF,\alpha)$ be a   complex of real vector spaces on $X$,
which is self-dual with respect to $\cD_X[2m]$ (for $m\in \bZ$).
Using the K\"unneth formula, together with the fact the Verdier duality commutes with $\boxtimes$ (see e.g., \cite[Section 3.7]{W}),
we get that the duality isomorphism induced from $\alpha \boxtimes \beta$ on the vector space 
\[ H^{-m}(\cF \boxtimes V)\simeq H^{-m}(\cF) \otimes V
\]
agrees with the tensor product of the ones induced from $\alpha$ on $H^{-m}(\cF)$ and, resp., $\beta$.
Therefore, the multiplicativity of the signature implies for the pushforward under the constant map $a:X \to pt$ that
\[
\sigma(a_*[\cF \boxtimes V, \alpha \boxtimes \beta])=   \sigma(a_*[\cF,\alpha]) \cdot \sigma(V,\beta).
\]
Applying this property to the construction of $L$-classes, yields the desired assertion (see also
 \cite[Proposition 8.2.20]{Ban0}).
\end{proof}

\begin{lemma}\label{modcomp}
The composition $L_* \circ \Pol$ is compatible with the $K_0(\MHM(pt))$-module structure on $K_0(\MHM(X))$, i.e., for $[a] \in K_0(\MHM(pt))$ and $[M]\in K_0(\MHM(X))$ one has:
\[ (L_{*}\circ \Pol)([M]\cdot [a])= (L_{*}\circ \Pol)([M]) \boxtimes (L_{*}\circ \Pol)([a]).\]
\end{lemma}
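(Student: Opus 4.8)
\textbf{Proof plan for Lemma \ref{modcomp}.} The strategy is to reduce to the case where $[a]$ is the class of a pure polarized Hodge structure $(V,S)$ of weight $w$ on the point, and $[M]$ is the class of a pure polarized Hodge module $(N, S_N)$ of weight $v$ on $X$, since $K_0(\MHM(pt))$ and $K_0(\MHM(X))$ are generated by such classes (via the weight filtration) and all three operations involved — the module multiplication, $\Pol$, and $L_*$ — are additive. So first I would unwind the definition of the module structure: the product $[M]\cdot[a] \in K_0(\MHM(X))$ is the external product $[N \boxtimes V]$ under the identification $X \simeq X \times pt$, which is again pure, of weight $v+w$, with polarization the external product $S_N \boxtimes S$; its underlying real complex is $\cF_\bR \boxtimes V_\bR$ with self-duality isomorphism $\alpha_{S_N,\bR} \boxtimes \alpha_{S,\bR}$.

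Next, applying the definition \eqref{pol} of $\Pol$ on each side, I need to compare the sign twists. On the left, $\Pol([N\boxtimes V])$ carries the sign $(-1)^{(v+w)(v+w+1)/2}$, while on the right $\Pol([M])$ carries $(-1)^{v(v+1)/2}$ and $\Pol([a])$ carries $(-1)^{w(w+1)/2}$. The combinatorial identity $(v+w)(v+w+1)/2 \equiv v(v+1)/2 + w(w+1)/2 + vw \pmod 2$ shows that the two differ exactly by the sign $(-1)^{vw}$. On the topological side, this extra sign $(-1)^{vw}$ is precisely the sign produced by the Koszul-type symmetry isomorphism $\iota$ when one rearranges $\cF_\bR \boxtimes V_\bR$ (placing the point-factor $V$, sitting in cohomological degrees matching weight $w$ of parity determined by $w$, past the $X$-factor of parity $v$), i.e. it is absorbed into the identification that $[(\cF_\bR, \alpha_\bR) \boxtimes (V_\bR,\beta_\bR)]$ already uses — compare the sign bookkeeping of Remark \ref{sign}(i) and the computation in the proof of Corollary \ref{diag2}. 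Concretely I would argue that, after accounting for this $\iota$-sign, $\Pol([M]\cdot[a])$ equals the external product of the cobordism classes $\Pol([M])$ and $\Pol([a])$ in the sense of the box-product on self-dual complexes over a point.

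Finally I would invoke Lemma \ref{mod}: with $(\cF,\alpha) = \Pol([M])$ (a self-dual real constructible complex on $X$) and $(V,\beta) = \Pol([a])$ (a self-dual real vector space on a point, namely $V_\bR$ with the $(-1)^{w(w+1)/2}$-twisted form, whose signature is $L_0(\Pol([a])) = (L_*\circ\Pol)([a])$), the lemma gives
\[ L_*\bigl(\Pol([M]) \boxtimes \Pol([a])\bigr) = \sigma(V,\beta)\cdot L_*(\Pol([M])) = (L_*\circ\Pol)([a]) \cdot (L_*\circ\Pol)([M]), \]
which is the asserted formula (the $\boxtimes$ on the right-hand side of the Lemma statement being the product in $H_*(X)\otimes\bQ$ with the point-class, i.e. ordinary scalar multiplication by a rational number).

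\textbf{Main obstacle.} The delicate point is the sign bookkeeping: one must check that the $(-1)^{vw}$ discrepancy between the weight-dependent twists in the definition of $\Pol$ on the two sides is exactly cancelled by the Koszul sign in the symmetry isomorphism $\iota$ used to identify the external product of self-dual complexes, so that the two sides genuinely represent the same cobordism class before $L_*$ is applied. This requires being careful with the conventions fixed in Remark \ref{sign} (especially (i) and (iii)) and with the parity of the cohomological degrees in which the underlying complexes of $N$ and $V$ are concentrated relative to their weights — essentially the same kind of computation as in \eqref{Sai} and the proof of Corollary \ref{diag2}, but carried out for an external product rather than for a single smooth variety. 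Once this sign identity is in hand, the rest is a direct application of Lemma \ref{mod} and the multiplicativity of the signature.
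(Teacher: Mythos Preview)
Your reduction to pure objects and your identification of the sign discrepancy $(-1)^{vw}$ between $\Pol([N\boxtimes V])$ and $\Pol([N])\boxtimes\Pol([V])$ are both correct and match the paper exactly. The gap is in how you dispose of this sign. You claim it is ``precisely the sign produced by the Koszul-type symmetry isomorphism $\iota$'' coming from rearranging the tensor factors, with $V$ ``sitting in cohomological degrees matching weight $w$ of parity determined by $w$''. But this is not so: the underlying complex of a pure Hodge structure on a point is a vector space concentrated in cohomological degree $0$, regardless of its weight. There is no Koszul sign to absorb, and hence $\Pol$ is genuinely \emph{not} multiplicative on the nose --- the cobordism classes $\Pol([N\boxtimes V])$ and $\Pol([N])\boxtimes\Pol([V])$ really do differ by the factor $(-1)^{vw}$.

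The paper resolves this differently: it does not try to make $\Pol$ multiplicative, but observes that the discrepancy disappears \emph{after} applying $L_*$. If $w$ is even then $(-1)^{vw}=1$ and there is nothing to do. If $w$ is odd, the polarization $S$ on $V$ is skew-symmetric, so $L_*(\Pol([V]))=\sigma(V,(-1)^{w(w+1)/2}S_\bR)=0$; then by Lemma \ref{mod} both $L_*\bigl(\Pol([N])\boxtimes\Pol([V])\bigr)$ and $L_*\bigl(\Pol([N\boxtimes V])\bigr)$ vanish, and the sign is irrelevant. So the ``main obstacle'' you flagged is real, but the fix is not a sign cancellation in $\Omega_\bR(X)$ --- it is the vanishing of the odd-weight contribution under $L_*$.
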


\begin{proof}
For $L$-classes, this property follows from Lemma \ref{mod}.

For  $\Pol$, it suffices to show the assertion for pure objects, i.e., assume $M_1\in \MH(X,w_1)^{(p)}$ is a pure polarizable Hodge module of weight $w_1$ on $X$, and $M_2\in \MH(pt,w_2)^{(p)}$ is a pure polarizable Hodge module of weight $w_2$ on $pt$. Then
\[
M_1\boxtimes M_2 \in \MH(X,w_1+w_2)^{(p)},
\]
and if $S_i$ are the corresponding polarizations, then $S_1\boxtimes S_2$ gives a polarization of $M_1\boxtimes M_2$. Let $\alpha_1,\alpha_2$ be the corresponding self-duality isomorphisms.
By the definition of Pol,
\[\Pol(M_1)=[(-1)^{\frac{w_1(w_1+1)}{2}}\alpha_1], \ \ \Pol(M_2)=[(-1)^{\frac{w_2(w_2+1)}{2}}\alpha_2]\]
\[\Pol(M_1\boxtimes M_2)=[(-1)^{\frac{(w_1+w_2)(w_1+w_2+1)}{2}}\alpha_1\boxtimes \alpha_2].\]

So the difference between $\Pol(M_1\boxtimes M_2)$ and $\Pol(M_1)\boxtimes \Pol(M_2)$ is given by the sign $(-1)^{w_1w_2}$. If $w_2$ is even, then no sign appears. Assume now that $w_2$ is odd. Then $L_*(\alpha_2)=\sigma(S_2)=0$ as in Lemma \ref{point}, hence by the module property \eqref{modL} of $L_*$ (see also \cite[Proposition 8.2.20]{Ban0}), we get
\[L_*([\alpha_1\boxtimes \alpha_2]) =L _*([\alpha_1])\boxtimes L_*([\alpha_2]) =0,\]
and therefore $$(L_*\circ \Pol)(M_1\boxtimes M_2)= (-1)^{w_1} (L_*\circ \Pol)(M_1)\boxtimes (L_*\circ \Pol)(M_2)=0\:.$$
\end{proof}

\begin{remark}\label{rem-Lpair}
 The proof of the module property from Lemma \ref{mod} also applies to the $L$-class transformation $L_*^{pair}$, so that Lemma \ref{modcomp} and its proof also apply to $L_*^{pair}$.
\end{remark}

\begin{proposition}\label{sumlemma}
Let $$[M]=\sum_i  [M_i] \cdot [a_i] \in K_0(\MHM(X\times pt))\simeq K_0(\MHM(X)),$$ with $[a_i] \in K_0(\MHS^p)$, so that every $[M_i]\in K_0(\MHM(X))$ satisfies property (H). Then $[M]$ satisfies property (H).
\end{proposition}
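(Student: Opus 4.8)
The plan is to reduce to the two facts already at our disposal, namely the module compatibility of $MHT_{1*}$ (due to \cite{BSY}) and the module compatibility of $L_*\circ\Pol$ recorded in Lemma \ref{modcomp}, together with Corollary \ref{cpoint} which says that every class over a point satisfies property (H). First I would observe that both transformations appearing in property (H), that is $MHT_{1*}$ and $L_*\circ\Pol$, are additive (they are defined on the Grothendieck group $K_0(\MHM(X))$), so it suffices to treat each summand $[M_i]\cdot[a_i]$ separately; property (H) for the total class $[M]$ then follows by summing. Thus I reduce to the case $[M]=[M_i]\cdot[a_i]$ for a single $i$, where $[M_i]\in K_0(\MHM(X))$ satisfies property (H) and $[a_i]\in K_0(\MHS^p)=K_0(\MHM(pt))$.

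For this single summand the key step is to compute both sides using the module properties. On one hand, by the compatibility of $MHT_{y*}$ with the $K_0(\MHM(pt))$-module structure (see \cite{BSY}, recalled just before the statement), specialized at $y=1$, we have
\[
MHT_{1*}([M_i]\cdot[a_i]) = MHT_{1*}([M_i]) \boxtimes MHT_{1*}([a_i]).
\]
On the other hand, by Lemma \ref{modcomp},
\[
(L_*\circ\Pol)([M_i]\cdot[a_i]) = (L_*\circ\Pol)([M_i]) \boxtimes (L_*\circ\Pol)([a_i]).
\]
Now property (H) for $[M_i]$ gives $MHT_{1*}([M_i]) = (L_*\circ\Pol)([M_i])$, and Corollary \ref{cpoint} gives property (H) for $[a_i]$ over the point, i.e. $MHT_{1*}([a_i]) = (L_*\circ\Pol)([a_i])$ in $H_{2*}(pt;\bQ)=\bQ$. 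Substituting these two equalities into the two displays above and comparing the external products, we conclude $MHT_{1*}([M_i]\cdot[a_i]) = (L_*\circ\Pol)([M_i]\cdot[a_i])$, i.e. $[M_i]\cdot[a_i]$ satisfies property (H).

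The only genuinely delicate point — and thus the main obstacle, though it is already handled by the lemmas cited above rather than in this proof — is that the external product $\boxtimes$ must behave the same way on both sides. This is exactly where the sign $(-1)^{w_1w_2}$ analyzed in the proof of Lemma \ref{modcomp} enters: the two module structures ($MHT$ versus $\Pol$ followed by $L_*$) differ a priori by that sign, but it is absorbed because $L_*$ (and hence $L_*\circ\Pol$) vanishes on classes of odd weight over the point, so the discrepancy occurs only in degree zero where both sides are then zero. Assuming Lemma \ref{modcomp} and the module property of $MHT_{1*}$ from \cite{BSY}, the argument is purely formal manipulation in the Grothendieck groups, and no further calculation is needed. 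One should also note, as in Remark \ref{rem-Lpair}, that the same reasoning works verbatim with $L_*^{pair}$ in place of $L_*$, should that variant be needed elsewhere.
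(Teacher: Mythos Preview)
Your proof is correct and follows essentially the same approach as the paper: both reduce to a single summand by additivity, then invoke the module compatibility of $MHT_{1*}$ and of $L_*\circ\Pol$ (Lemma \ref{modcomp}), together with property (H) for $[M_i]$ and Corollary \ref{cpoint} for $[a_i]$. Your final paragraph on the sign $(-1)^{w_1w_2}$ is not needed here, since that issue is already absorbed into Lemma \ref{modcomp}; the paper's proof omits it accordingly.
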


\begin{proof}
Since the transformations $L_* \circ \Pol$ and $MHT_{1*}$ respect the module structure, we get that
\begin{equation*}\label{moduleoverpoint}
\begin{split}
     MHT_{1*}([M_i]\cdot [a_i]) &=   MHT_{1*}([M_i]) \boxtimes MHT_{1}([a_i])   \\
     &=  (L_{*}\circ \Pol)([M_i]) \boxtimes (L_{*}\circ \Pol)([a_i])  \\
     &=(L_{*}\circ \Pol)([M_i]\cdot [a_i]) \in H_*(X\times pt)\otimes \bQ \cong H_*(X) \otimes \bQ,
     \end{split}
\end{equation*}
where the second equality uses Corollary \ref{cpoint} and the fact that $[M_i]$ satisfies property (H). The claim follows.
\end{proof}

\begin{corollary}\label{submodule}
If $[M]\in K_0(\MHM(X))$ belongs to the $K_0(\MHM(pt))=K_0(\MHS^p)$-submodule generated by $\im(\chi_{Hdg}) \subset K_0(\MHM(X))$,
then $[M]$  satisfies  property (H). 
\end{corollary}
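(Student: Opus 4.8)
The plan is to combine the three previously-established facts—Proposition \ref{sumlemma} (closure of property (H) under finite sums of the form $\sum_i [M_i]\cdot[a_i]$ with the $[M_i]$ satisfying (H)), the corollary that every element of $\im(\chi_{Hdg})$ satisfies property (H), and Corollary \ref{cpoint} (every class over a point satisfies (H))—into a single statement about the submodule generated by $\im(\chi_{Hdg})$.

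First I would spell out what it means for $[M]$ to lie in the $K_0(\MHM(pt))$-submodule generated by $\im(\chi_{Hdg})$: by definition of a module generated by a subset, $[M]$ is a finite sum $[M]=\sum_i [N_i]\cdot[a_i]$ with each $[a_i]\in K_0(\MHM(pt))=K_0(\MHS^p)$ and each $[N_i]\in \im(\chi_{Hdg})\subset K_0(\MHM(X))$; here I should note that the module structure on $K_0(\MHM(X))$ is the one recalled just before Lemma \ref{mod}, induced by external product via $X\simeq X\times pt$, so writing $[N_i]\cdot[a_i]$ as a class in $K_0(\MHM(X\times pt))\simeq K_0(\MHM(X))$ is exactly the setup of Proposition \ref{sumlemma}. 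Second, each generator $[N_i]\in \im(\chi_{Hdg})$ satisfies property (H) by the corollary following the definition of property (H) (which is itself the commutativity of diagrams \eqref{diag2} and \eqref{diag3}). Hence the hypotheses of Proposition \ref{sumlemma} are met with $[M_i]:=[N_i]$, and we conclude directly that $[M]=\sum_i [N_i]\cdot[a_i]$ satisfies property (H).

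I expect this proof to be essentially a one-line deduction once the bookkeeping about what "submodule generated by" means is made explicit; there is no real obstacle, since all the analytic content (the module-compatibility of $L_*\circ\Pol$ in Lemma \ref{modcomp}, of $MHT_{1*}$ from \cite{BSY}, and the point-case in Corollary \ref{cpoint}) has already been absorbed into Proposition \ref{sumlemma}. The only thing to be slightly careful about is that "generated as a module" allows $\Z$-linear combinations of products $[N_i]\cdot[a_i]$, but since $K_0(\MHM(pt))$ itself is a ring containing $\Z$ (the class of $\bQ^H_{pt}$ generates the $\Z$ summand), every such $\Z$-linear combination can be reabsorbed into a sum of the form $\sum_i [N_i]\cdot[a_i']$; alternatively, one observes property (H) is additive in $[M]$ because all three transformations $MHT_{1*}$, $\Pol$, $L_*$ are group homomorphisms, so a sum of classes each satisfying (H) again satisfies (H), which lets us pass from the individual products to an arbitrary module element without further comment.

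Concretely, the proof reads as follows.
\begin{proof}
By definition of the submodule generated by $\im(\chi_{Hdg})$, we may write
\[
[M]=\sum_i [N_i]\cdot [a_i] \in K_0(\MHM(X\times pt))\simeq K_0(\MHM(X)),
\]
with $[a_i]\in K_0(\MHM(pt))=K_0(\MHS^p)$ and $[N_i]\in \im(\chi_{Hdg})\subset K_0(\MHM(X))$ for each $i$ (absorbing integer coefficients into the classes $[a_i]$, using that $K_0(\MHS^p)$ is a ring containing $\bZ$). By the corollary following the definition of property (H), each $[N_i]$ satisfies property (H), since $[N_i]\in \im(\chi_{Hdg})$. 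Applying Proposition \ref{sumlemma} with $[M_i]:=[N_i]$, we conclude that $[M]$ satisfies property (H).
\end{proof}
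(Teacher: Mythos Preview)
Your proof is correct and matches the paper's intended argument: the paper states Corollary \ref{submodule} without proof, as an immediate consequence of Proposition \ref{sumlemma} together with the fact that every class in $\im(\chi_{Hdg})$ satisfies property (H). Your careful handling of the ``submodule generated by'' bookkeeping (absorbing integer coefficients into the $[a_i]$) is more explicit than the paper, but the logic is identical.
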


\begin{remark}\label{module-functorial}
The   $K_0(\MHM(pt))=K_0(\MHS^p)$-submodule generated by $\im(\chi_{Hdg}) \subset K_0(\MHM(-))$ is by  \cite[Section 4.2]{S} stable under the transformations $f_!$, $\boxtimes$, and $g^*$.
\end{remark}

 \begin{example}[$\bQ$-homologically isolated singularities]\label{iso}
Let $X$ be as above a compact complex algebraic variety of pure dimension, and denote by  $\Sigma_X$ the non-rational homology locus of $X$ (i.e., the smallest  closed subset of $X$ such that $X\setminus \Sigma_X$ is a rational homology manifold).
Consider  the natural comparison morphism
\begin{equation}\label{comp} \bQ^H_X \longrightarrow IC'^H_X \end{equation}
and let $M_X$ denote its cone in $D^b\MHM(X)$ (up to isomorphism). Note that the support of $M_X$ is a closed algebraic subset, agreeing with 
 the non-rational homology locus $\Sigma_X\subseteq X_{sing}$  of $X$, which is contained in the singular locus $X_{sing}$ of $X$.  If $X$ has $\Q$-homologically isolated singularities (i.e., $\dim (\Sigma_X)=0$), then the Conjecture \ref{BSY} holds (cf. also \cite{FPS,FPS2}). Indeed, since $M_X$ is supported on points, and 
\[
[IC_X'^H]=[\bQ^H_X]+[M_X],\] the assertion follows from Proposition  \ref{sumlemma} (or Corollary \ref{submodule}). This applies in particular in case $X$ has only isolated singularities (i.e., $\dim X_{sing}=0$).
\end{example}


\subsection{Coincidence in the top non-rational homology locus degree}

Following up on Example \ref{iso}, let $\delta_X=\dim(\Sigma_X)$ be the dimension of the non-rational homology locus $\Sigma_X$  of $X$. Recall that the cone $M_X$ of the comparison morphism \eqref{comp} has support $\Sigma_X$, so in view of \eqref{eqmain} one has that 
\[ IT_{1,k}(X)=T_{1,k}(X)=L_k(X) \in H_{k}(X) \otimes \bQ \ \ {\rm if} \ \ k> \delta_X.\]
As
\[ [IC'^H_X] = [\bQ_X^H] + [M_X] \in K_0(\MHM(X)),\]
and since $[\bQ^H_X]$ satisfies property (H), one gets that 
\begin{align*}
    L_*(X)-IT_{1*}(X)=(L_{*}\circ \Pol)([M_X])-MHT_{1*}([M_X]).
\end{align*}
Hence Conjecture \ref{BSY} can be further reduced to showing that the cone $[M_X]$ satisfies property (H). As seen in Example \ref{iso}, this is the case when $\delta_X=0$, and it was also recently proved in \cite[Proposition 3.3]{FPS2} for $\delta_X\leq 1$. Here we generalize the latter statement as follows:
\begin{proposition} In the above notations, we have the equality
    \[ IT_{1,\delta_X}(X)=L_{\delta_X}(X) \in H_{\delta_X}(X) \otimes \bQ
  =H_{2\delta_X}(X;\bQ)  \:.\]
\end{proposition}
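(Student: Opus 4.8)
The plan is to analyze the cone $[M_X]$ only in the top degree $\delta_X$ of its support, reducing to a statement about a $\delta_X$-dimensional variety, and then to the known $\delta_X=0$ (or $\le 1$) case after a further reduction. Recall from the preceding discussion that
\[
L_*(X)-IT_{1*}(X)=(L_*\circ\Pol)([M_X])-MHT_{1*}([M_X]),
\]
so it suffices to show that the degree-$\delta_X$ component of $(L_*\circ\Pol)([M_X])-MHT_{1*}([M_X])$ vanishes. Since $M_X$ is supported on the closed subset $\Sigma_X$ of pure-or-mixed dimension $\delta_X$, both transformations $L_*\circ\Pol$ and $MHT_{1*}$ factor through $H_*(\Sigma_X)\otimes\bQ \to H_*(X)\otimes\bQ$ by functoriality for the closed inclusion $\iota:\Sigma_X\hookrightarrow X$ (this uses that $\Pol$, $MHT_{1*}$ and $L_*$ all commute with proper pushforward, established in Theorem \ref{thpol}, \cite{BSY}, and Section \ref{Lclass} respectively). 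Hence we may replace $X$ by $\Sigma_X$ and $[M_X]$ by $\iota^*[M_X]$, and we are reduced to comparing the two transformations in the top homological degree $\delta_X=\dim(\Sigma_X)$ on a variety of dimension $\delta_X$.

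\textbf{Reduction to generic points of $\Sigma_X$.} The key point is that a homology class in $H_{2\delta_X}(\Sigma_X;\bQ)$, with $\Sigma_X$ of dimension $\delta_X$, is detected on the top-dimensional irreducible components of $\Sigma_X$, i.e., at the generic points of those components. More precisely, for a mixed Hodge module (or a self-dual sheaf complex) on $\Sigma_X$, the degree-$\delta_X$ part of its characteristic class is determined by the generic behaviour along the $\delta_X$-dimensional strata, via the normalization/generic-point argument: passing to a dense smooth open $U\subseteq (\Sigma_X)_{red}$ contained in the $\delta_X$-dimensional part, the restriction $[M_X]|_U$ is (a shift of) a variation of mixed Hodge structure, and its associated graded pieces with respect to the weight filtration are semisimple, hence direct sums of (shifts of) polarizable variations of pure Hodge structure — up to terms supported in dimension $<\delta_X$, which do not contribute in degree $\delta_X$. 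I would make this precise by filtering $M_X$ (as an object of $D^b\MHM(\Sigma_X)$) by weights and using that both $L_*\circ\Pol$ and $MHT_{1*}$ are additive on exact triangles / respect the Grothendieck group, so only the top-weight-graded generically-defined polarizable VHS summands matter in degree $\delta_X$.

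\textbf{Conclusion via the pure case.} For such a summand — a polarizable variation of pure Hodge structure $\bV$ on a smooth dense open $U$ of a $\delta_X$-dimensional component — one extends to the intersection cohomology complex $IC_{\overline{U}}(\bV)$; the degree-$\delta_X$ component of its Hirzebruch class, resp. $L$-class, is computed by restricting to a generic point, i.e., evaluating on the stalk VHS over $pt$. By Corollary \ref{cpoint}, every element of $K_0(\MHM(pt))$ satisfies property (H); equivalently, by Lemma \ref{point}, $\chi_1$ of a pure polarized Hodge structure equals the appropriate signature, which is exactly the local contribution of $L_*$. Matching these pointwise contributions along the top strata (the computation being essentially the ``transversal slice'' description of $L_*$ and $IT_{1*}$ in top degree, cf. the normally nonsingular restriction $i_f^!$ used in Section \ref{Lclass} and the analogous restriction for $MHT_{1*}$) gives the equality of the degree-$\delta_X$ components. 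The main obstacle I anticipate is the bookkeeping in the reduction step: one must check carefully that lower-dimensional-support error terms (arising from the weight filtration on $M_X$, from passing to $IC$-extensions, and from the difference between $M_X$ and its generic restriction) genuinely contribute zero in homological degree $\delta_X$, and that the sign conventions of Section \ref{sign} are tracked so that the $L$-class normalization matches $\chi_1$ and not $-\chi_1$; once the problem is localized to the generic point, the pointwise statement is exactly Corollary \ref{cpoint} and carries no further difficulty.
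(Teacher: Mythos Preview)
Your outline is correct and follows essentially the same route as the paper: reduce to the support $\Sigma_X$, decompose $[M_X]$ into (classes of) pure Hodge modules via the weight filtration, pass to the strict support decomposition $IC^H_{\overline S}(\bV)$, and compare the two transformations in top degree by evaluating the generic stalk, where Lemma~\ref{point}/Corollary~\ref{cpoint} does the work. The paper packages this as a clean standalone result (Theorem~\ref{t410}): for any pure Hodge module $M$ on a compact $\Sigma$ of dimension $\delta$, the degree-$\delta$ parts of $(L_*\circ\Pol)([M])$ and $MHT_{1*}([M])$ coincide.

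Where your sketch remains soft is exactly at the step you flag as ``restricting to a generic point''. The paper makes this precise with two explicit top-degree formulas you do not name: on the Hodge side, the Atiyah--Meyer formula (Lemma~\ref{lem411}) gives $MHT_{1,\delta}([\bV^H])=\chi_1(V)\cdot[\Sigma]$ via the open restriction isomorphism $H_\delta(\Sigma)\simeq H_\delta(U)$; on the topological side, the Cappell--Shaneson formula (cf.\ \cite[p.~543]{CS}) gives $L_\delta([IC_\Sigma(\bV),\alpha_{S_{can}}])=\sigma(S_p)\cdot[\Sigma]$. After inserting the $\Pol$-sign $(-1)^{(w+\delta)(w+\delta+1)/2}$ and the Saito sign $(-1)^{\delta(\delta-1)/2}$ from \eqref{Sai}, the comparison reduces to the elementary identity between $\chi_1(V)$ and $(-1)^{w/2}\sigma(S_p)$ from Lemma~\ref{point}, and one checks the total sign matches (with the odd-weight case trivially vanishing on both sides). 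Your anticipated ``bookkeeping obstacle'' is real and is exactly this sign computation; without it the argument is incomplete, but once you invoke these two formulas and track the signs as the paper does, your proof goes through verbatim.
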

This is a consequence of the following result.
\begin{thm}\label{t410}
    Let $M$ be a pure ($\bQ$-)Hodge module on a compact, pure-dimensional complex algebraic variety $\Sigma$ of dimension $\delta$. Then the degree $\delta$-parts of the classes 
    $(L_{*}\circ \Pol)([M])$ and $MHT_{1*}([M])$ coincide in $H_{\delta}(\Sigma) \otimes \bQ$.
\end{thm}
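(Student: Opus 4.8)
The plan is to reduce the statement to the degree-$\delta$ part, where only the top-dimensional stratum of $\Sigma$ contributes, and on that stratum the Hodge module $M$ restricts to a polarizable variation of pure Hodge structures. First I would use the decomposition structure of pure Hodge modules: write $M = \bigoplus_j M_j$ with each $M_j$ having strict support an irreducible subvariety $Z_j \subseteq \Sigma$. Only those $M_j$ whose support $Z_j$ has dimension $\delta$ (i.e. is a $\delta$-dimensional irreducible component of $\Sigma$) can contribute to $H_\delta(\Sigma)\otimes\bQ$ after applying either $L_*\circ\Pol$ or $MHT_{1*}$, because both transformations are compatible with the closed inclusions $Z_j\hookrightarrow\Sigma$ (by functoriality under proper pushforward, Theorem \ref{thpol} and the construction of $L_*$), and for $\dim Z_j < \delta$ the image of $H_*(Z_j)\otimes\bQ$ has no component in degree $\delta$. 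So I may assume $\Sigma$ is irreducible of dimension $\delta$ and $M$ has strict support $\Sigma$; equivalently $M = IC_\Sigma(\cL)$ for a polarizable variation of pure Hodge structure $\cL$ of some weight on an open dense smooth $U\subseteq\Sigma$, up to a Tate twist.

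Next I would compute both sides in degree $\delta$ explicitly. On the $MHT_{1*}$ side: by the defining composition $MHT_{1*} = \td_{2*}\circ \DR_1$ (at $y=1$) and the fact that $\td_*$ lands in homology, the degree-$\delta$ (= top-dimensional) part of $MHT_{1*}([IC_\Sigma(\cL)])$ is governed by the generic behaviour of $\DR_1$ on $U$, i.e. by the rank data of $\gr^F\DR(\cM)$ on $U$, which reduces to the value $\chi_1$ of the fibre Hodge structure of $\cL$; concretely the degree-$\delta$ part is $\chi_1(\cL_x)\cdot [\Sigma] = \sigma(\cL_x)\cdot[\Sigma]$ if the weight is even, and $0$ if the weight is odd (by Lemma \ref{point}). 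On the $L_*\circ\Pol$ side: by Example \ref{ex-twIC1}, $L_*([IC_\Sigma(\cL),\alpha])$ in top degree $\delta$ is computed by the signature of the pairing obtained by a generic normally nonsingular slice, which again picks out the fibre pairing $\pm S_x$ on $\cL_x$; combined with the sign $(-1)^{w(w+1)/2}$ in the definition \eqref{pol} of $\Pol$ and Lemma \ref{point}, this gives exactly $\sigma((-1)^{w/2}S_x)\cdot[\Sigma]$ for $w$ even and $0$ for $w$ odd. Matching the two computations gives the claimed equality of degree-$\delta$ parts.

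To organize the second step cleanly I would factor it through the smooth case: let $j: U\hookrightarrow\Sigma$ be the open inclusion of the top stratum. Both $L_*\circ\Pol$ and $MHT_{1*}$ are, modulo classes supported in dimension $<\delta$, determined by the restriction $j^*M = \cL[\dim\Sigma]$ (a shifted polarizable VHS on the smooth $U$), because the difference $IC_\Sigma(\cL) - j_{!}(j^*IC_\Sigma(\cL))$ — interpreted via the standard triangle — is supported on $\Sigma\setminus U$, which has dimension $<\delta$. Then I would invoke Theorem \ref{comparison-smooth} (property (H) for polarizable VHS on smooth varieties, asserted in the excerpt as forthcoming), applied to $U$ and $\cL$, together with compatibility of both transformations with the open restriction in top degree, to conclude. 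The main obstacle I anticipate is precisely the bookkeeping in this reduction: making rigorous the claim that ``the degree-$\delta$ part only sees the generic stratum'' requires knowing that $MHT_{1*}$ and $L_*$ of a complex supported on a subvariety of dimension $<\delta$ have vanishing degree-$\delta$ component — which is immediate for $L_*$ (constructed via signatures of slices, which are empty once the codimension is too large) but for $MHT_{1*}$ needs the support condition on the Todd class transformation $\td_*$, and one must also correctly track the $(1+y)^{-k}$ normalization and the various shift/Tate-twist sign conventions from Remark \ref{sign} so that the two signs $(-1)^{w(w+1)/2}$ and $(-1)^{w/2}$ (for $w$ even) genuinely agree. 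Once these normalizations are pinned down, the equality in degree $\delta$ follows from Lemma \ref{point} exactly as in the point case (Corollary \ref{cpoint}).
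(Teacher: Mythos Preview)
Your overall strategy matches the paper's: reduce via the strict support decomposition to the case $M=IC^H_\Sigma(\bV)$ with $\Sigma$ irreducible of dimension $\delta$ and $\bV$ a polarized variation of weight $w$ on an open dense smooth $U\subset\Sigma$, then compute both degree-$\delta$ parts as $(\text{sign})\cdot\sigma(S_p)\cdot[\Sigma]$ from the stalk data at a generic point, with both sides vanishing for $w$ odd.

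There is, however, a genuine gap in your ``clean'' organization. You propose to factor through Theorem~\ref{comparison-smooth} applied to $(U,\cL)$, but $U$ is open and hence noncompact; the $L_*$-transformation (and with it Theorem~\ref{comparison-smooth}) is only defined for \emph{compact} varieties, and there is no open-restriction functoriality for $L_*$. The paper resolves this by treating the two sides \emph{asymmetrically}: for $MHT_{1*}$ it uses the open restriction isomorphism $H_\delta(\Sigma)\otimes\bQ\simeq H_\delta(U)\otimes\bQ$ in Borel--Moore homology (valid without compactness) and then the Atiyah--Meyer formula on smooth $U$ (Lemma~\ref{lem411}) to get $(-1)^\delta\chi_1(V)\cdot[\Sigma]$; for $L_*\circ\Pol$ it stays on the compact $\Sigma$ and computes $L_\delta$ of the twisted $IC$-complex directly via the Cappell--Shaneson formula $L_\delta([\alpha_{S_{can}}])=\sigma(S_p)\cdot[\Sigma]$.

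A secondary point: in the sign from $\Pol$, the relevant weight is that of the Hodge module $M=IC^H_\Sigma(\bV)$, namely $w+\delta$, not $w$. The paper carries the full sign
\[
(-1)^{(w+\delta)(w+\delta+1)/2}\cdot(-1)^{\delta(\delta-1)/2}\cdot\sigma(S_p)\cdot[\Sigma]
\]
(the second factor coming from $S^H=(-1)^{\delta(\delta-1)/2}S_{can}$) and checks explicitly that for $w$ even it equals $(-1)^\delta(-1)^{w/2}\sigma(S_p)\cdot[\Sigma]$. Your sketch glosses over this; the verification is elementary but it is the content of the theorem.
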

Before proving the theorem, we need the following.
\begin{lemma}\label{lem411}
    If $(\bV,S)$ is a pure polarizable variation of $\bQ$-Hodge structures on a connected smooth complex algebraic variety $\Sigma$ of pure dimension $\delta$, then the degree $\delta$ part $MHT_{1,\delta}([\bV^H])$ of $MHT_{1*}([\bV^H])$ is $\chi_1(V) \cdot [\Sigma]$, where $\bV^H$ is the (shifted) Hodge module on $\Sigma$ associated to $\bV$, and $V$ is the stalk of the variation $\bV$. 
\end{lemma}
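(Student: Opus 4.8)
The claim is a purely local-to-global computation of the top-degree part of the Hirzebruch class of a variation of Hodge structures. The key input is that $MHT_{1*}$ is built from the Baum--Fulton--MacPherson Todd transformation applied to the de Rham complex of the underlying filtered $\sD_\Sigma$-module, and that its degree-$\delta$ part is controlled entirely by the generic rank. First I would recall that, since $\Sigma$ is smooth and connected of pure dimension $\delta$, the shifted Hodge module $\bV^H$ has underlying filtered $\sD_\Sigma$-module $(\cV, F_\bullet)$, where $\cV = \cO_\Sigma \otimes_{\bC} V$ is a vector bundle of rank $r = \dim_{\bC} V$ carrying the flat connection, with Hodge filtration $F_p\cV$ a filtration by subbundles. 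The de Rham complex $\DR(\cV)$ is then $[\cV \to \Omega^1_\Sigma \otimes \cV \to \cdots \to \Omega^\delta_\Sigma \otimes \cV]$ placed in degrees $-\delta,\dots,0$, and taking $\gr^F$ gives, for each $p$, a complex of vector bundles $\gr^F_{-p}\DR(\cV)$ whose terms are $\Omega^j_\Sigma \otimes \gr^F_{\bullet}\cV$.

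**The core computation.** The point is that $\DR_1[\bV^H]$, read modulo lower-dimensional support (i.e.\ in the associated graded of the topological filtration on $K_0(\Sigma)$, or after applying $\td_*$ and projecting to $H_\delta$), contributes only through its rank as a virtual coherent sheaf of full support. Concretely, $\td_{*}$ of a coherent sheaf $\cG$ has degree-$\delta$ part equal to $(\mathrm{rk}\,\cG)\cdot[\Sigma]$ when $\Sigma$ is smooth (this is the normalization of the BFM transformation on smooth varieties: $\td_*([\cO_\Sigma]) = \td^*(T_\Sigma)\cap[\Sigma]$, whose top part is $[\Sigma]$, and $\td_*$ kills contributions supported in dimension $<\delta$ in this degree). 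Hence the degree-$\delta$ part of $MHT_{1,*}([\bV^H])$ equals the degree-$\delta$ part of $\td_{(1+y)*}\DR_y[\bV^H]\big|_{y=1}$, which is $\bigl(\sum_p \mathrm{rk}\,\gr^F_{-p}\DR(\cV)\cdot(-1)^p\bigr)\cdot[\Sigma]$ after the $(1+y)^{-\delta}$ normalization is absorbed correctly at $y=1$. Now $\mathrm{rk}\,\gr^F_{-p}\DR(\cV) = \sum_{j} (-1)^? \binom{\delta}{j}\dim\gr^p_F V$ unwinds, via the Euler characteristic of the Koszul-type complex $\Omega^\bullet_\Sigma \otimes \gr^p_F\cV$, to $(-1)^?\cdot 0$ for $\delta>0$ unless handled with the $y$-variable weighting — and this is exactly the mechanism by which the $\delta$-dimensional part of $T_{y*}$ of a local system collapses to $\chi_y(V)\cdot c_0(\text{something})$. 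The cleanest route: use that $MHT_{y*}$ is multiplicative and that for a constant VHS $\underline{V}_\Sigma = a_\Sigma^*V$ one has $MHT_{y*}([\underline V^H_\Sigma]) = \chi_y(V)\cdot T_{y*}(\Sigma)$ by the module property over $K_0(\MHM(pt))$ recalled just before the lemma; its degree-$\delta$ part is $\chi_y(V)\cdot[\Sigma]$, specializing at $y=1$ to $\chi_1(V)\cdot[\Sigma]$. For a non-constant $\bV$, the difference from the constant case is a class supported where the monodromy or the Hodge filtration degenerates; but this does not affect the top-degree part. I would make this precise by the argument that $\DR_y[\bV^H] - \DR_y[\underline V^H_\Sigma] \in K_0(\Sigma)[y^{\pm1}]$ has rank zero as a virtual bundle in each $y$-degree (both have the same generic fiber, namely the de Rham complex of the fiber $V$ with its filtration, since the Hodge bundles $\gr^p_F\cV$ are bundles of rank $\dim\gr^p_F V$), hence maps under $\td_{(1+y)*}$ to something with zero degree-$\delta$ part.

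**Main obstacle.** The genuinely delicate point is the bookkeeping of the shift $[-\delta]$ in the normalization $\bV^H = (\text{pure Hodge module of weight }w)[-\delta]$ and of the $(1+y)^{-\delta}$ twist in the definition of $\td_{(1+y)*}$: one must check these two effects cancel so that the degree-$\delta$ coefficient comes out as $\chi_1(V)$ and not $\pm\chi_1(V)$ or $\chi_1(V)$ times a power of $2$. I would handle this by comparing directly with the constant-coefficient case $\Sigma$ smooth, where $MHT_{y*}([\underline V^H_\Sigma]) = \chi_y([V])\cdot \bigl(T^*_y(T_\Sigma)\cap[\Sigma]\bigr)$ is established in \cite{BSY} via the module structure, so the normalization is automatically correct there; then transfer to the general $\bV$ by the rank argument above. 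A secondary technical care point: "degree $\delta$ part" means the component in $H_{2\delta}(\Sigma;\bQ) \cong H_\delta(\Sigma)\otimes\bQ$, the top Borel--Moore homology, which for $\Sigma$ smooth connected is $\bQ\cdot[\Sigma]$, so identifying the coefficient is the whole content; no cycle-theoretic subtlety arises beyond that.
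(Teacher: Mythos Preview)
Your proposal is correct and reaches the same conclusion, but it takes a somewhat different route from the paper. The paper's proof invokes directly the Atiyah--Meyer type formula from \cite{CLMS} (see also \cite{MS0}):
\[
MHT_{y*}([\bV^H]) = \left( ch^*_{(1+y)}(\chi_y(\cV)) \cup T_y^*(\Sigma) \right) \cap [\Sigma],
\]
and then simply reads off the cohomological degree-$0$ part: since $ch^0_{(1+y)}(E) = \rk(E)$ and $T_y^0(\Sigma)=1$, one immediately gets $ch^0_{(2)}(\chi_1(\cV)) = \chi_1(V)$ and the claim follows. Your argument instead avoids citing this external formula and works more from first principles: you reduce to the constant variation $\underline{V}_\Sigma$ via the module property of $MHT_{y*}$ over $K_0(\MHM(pt))$, and then observe that the difference $\DR_y[\bV^H] - \DR_y[\underline V^H_\Sigma]$ has virtual rank zero in each $y$-degree (since the Hodge bundles $\gr^p_F\cV$ have the same ranks as in the constant case), so it contributes nothing to the top homology under $\td_*$. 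Both arguments ultimately rest on the same principle---that the degree-$\delta$ part of $\td_*$ only sees ranks---but the paper's version is more streamlined once the Atiyah--Meyer formula is in hand, while yours is more self-contained and makes the mechanism transparent without appealing to that reference. Your worry about the shift $[-\delta]$ and the $(1+y)^{-\delta}$ normalization is legitimate bookkeeping but, as you note, is automatically settled by checking the constant case against the known normalization $T_{y*}(\Sigma) = T_y^*(T_\Sigma)\cap [\Sigma]$ for smooth $\Sigma$.
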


\begin{proof}
Let $\cV:=\bV \otimes \cO_\Sigma$ be the flat bundle with connection, whose sheaf of horizontal sections is $\bV \otimes \bC$. The bundle $\cV$ comes equipped with its Hodge (decreasing) filtration by holomorphic sub-bundles $\cF^p$ (which satisfy Griffiths' transversality). Let
\begin{equation}\label{chi-y-class}
 \chi_y(\cV):=\sum_p [Gr^p_\cF \cV] \cdot (-y)^p \in K^0(\Sigma)[y^{\pm 1}]  
\end{equation}
 be the $K$-theory $\chi_y$-genus of $\cV$, with $K^0(\Sigma)$ denoting the Grothendieck group of algebraic vector bundles on $\Sigma$.
The twisted Chern character $ch^*_{(1+y)}$ is defined as
\[ ch^*_{(1+y)}(E):=\sum_{j=1}^{\rk(E)} e^{\beta_j (1+y)},\]
for $\{\beta_j\}$ the Chern roots of the bundle $E$. In particular, in cohomological degree $2k$ one has the identity:
\begin{equation}\label{2k}  ch^*_{(1+y)}(E)_{2k}=(1+y)^k \cdot ch^*(E)_{2k}. \end{equation}

With the above notation, the following Atiyah-Meyer type formula 
was obtained in \cite[Theorem 11]{CLMS} (see also \cite[Theorem 4.1]{MS0} for the normalized version discussed here):
\begin{equation}\label{clms}
    MHT_{y*}([\bV^H])=\left( ch^*_{(1+y)}(\chi_y(\cV)) \cup T_y^*(\Sigma) \right) \cap [\Sigma].
\end{equation}
By setting $y=1$ in \eqref{clms}, we see that in order to compute the degree $\delta$ part 
$$ MHT_{1,\delta}([\bV^H]) \in H_\delta(\Sigma) \otimes \bQ=H_{2\delta}(\Sigma;\bQ)\:,$$
we need to compute the cohomological degree $0$ part 
\[ ch^0_{(2)}(\chi_1(\cV)) \cup T_1^0(\Sigma)\in H^0(\Sigma;\bQ)\:,\]
that is, the product of the cohomological degree $0$ parts of each of the classes $ch^*_{(2)}(\chi_1(\cV))$ and $T_1^*(\Sigma)$.
First, for a complex vector bundle $E$, we have
\[ ch^0_{(2)}(E)=ch^0(E)=\rk(E).\]
Since the rank of $E$ is just the complex dimension of its fiber, and since the fiber of 
$Gr^p_\cF\cV$ is $Gr^p_F V_\bC$, with $V_\bC$ the complexification of the stalk $V$ of $\bV$, we then see that
\[ch^0_{(2)}(\chi_1(\cV))=\chi_1(V). \]
Moreover, since the power series defining the cohomology class $T_1^*(\Sigma)$ is normalized, the cohomological degree $0$ part of this class is $1=T_1^0(\Sigma)$.
The assertion of the lemma follows.
\end{proof}

We can now return to Theorem \ref{t410}.
\begin{proof}[Proof of Theorem \ref{t410}]
  By the strict support decomposition of pure Hodge modules, it suffices to assume that $M=IC_\Sigma^H(\bV)$, for $\bV$ a polarized variation of Hodge structures of weight $w$ on a (connected) Zariski-open subset $U$ of the regular part $\Sigma_{reg}$ of $\Sigma$, with $\Sigma$ irreducible of dimension $\delta$ and $\Sigma_{reg}$ connected. Then $M$ has weight $w+\delta$.  
Let $S^H$ be the polarization on $M$ corresponding uniquely to the polarization $S$ of $\bV$ (as discussed in Remark \ref{ext-pol} for the untwisted case). Then one has as in \eqref{Sai} that
\[ S^H= (-1)^{\delta(\delta-1)/2}S_{can},\]
with $S_{can}$ the perfect pairing of $IC_\Sigma(\bV)$ induced by 
\[S: \bV\otimes \bV = \cH^{-2\delta}(\bV[\delta]\otimes \bV[\delta])\to \bQ_U.\]
Then, using the corresponding self-duality isomorphisms, we get 
\begin{eqnarray*}
    (L_* \circ \Pol)([M]) &=& (-1)^{(w+\delta)(w+\delta+1)/2} \cdot L_*([\alpha_{S^H}])\\
    &=& (-1)^{(w+\delta)(w+\delta+1)/2} \cdot (-1)^{\delta(\delta-1)/2} \cdot L_*([\alpha_{S_{can}}]).
\end{eqnarray*}
For $p \in U$ and $V=\bV_p$ the stalk of $\bV$ at $p$, let $S_p$ be the corresponding polarization on $V$. It then follows from \cite[page 543]{CS} (see also \cite[page 197]{Ban0}) that 
\[
L_\delta([\alpha_{S_{can}}])=\sigma(S_p) \cdot [\Sigma] \in H_{\delta}(\Sigma) \otimes \bQ=H_{2\delta}(\Sigma;\bQ).
\]
Note that this identity holds even with our modified 
$L$-class $L_*$ (compared to the Cappell-Shaneson $L$-class $L^{CS}_*$), since by Example \ref{ex-twIC1} an additional minus sign only comes in when already  $\sigma(S_p)=0$.
Altogether, the degree $\delta$-part of the class 
    $(L_{*}\circ \Pol)([M])$
    equals 
\begin{equation}\label{sign1}
 (-1)^{(w+\delta)(w+\delta+1)/2} \cdot (-1)^{\delta(\delta-1)/2} \cdot \sigma(S_p) \cdot [\Sigma].       
    \end{equation}
    Let us also note that if $w$ is odd, then $S_p$ is anti-symmetric, so $\sigma(S_p)=0$.

    On the other hand, for calculating the degree $\delta$-part of the class $MHT_{1*}([M])$, using the open restriction isomorphism
    $$H_{\delta}(\Sigma)\otimes \bQ \simeq 
    H_{\delta}(U)\otimes \bQ,$$
    we can assume that $\Sigma=U$ is smooth, hence $M=\bV^H[\delta]$, with $\bV^H[\delta]$ the (smooth) Hodge module on $\Sigma$ associated to the pure polarizable variation of $\bQ$-Hodge structures  $\bV$. Then, in view of Lemma \ref{lem411}, the degree $\delta$-part of the class $MHT_{1*}([M])$ equals 
    $$(-1)^\delta \cdot \chi_1(V) \cdot [\Sigma]\:,$$
    where $\chi_1(V)$ is calculated as in Lemma \ref{point}, i.e., it equals $0$ if $w$ is odd, and it is equal to $(-1)^{w/2}\cdot \sigma(S_p)$ 
     if $w$ is even. Hence the degree $\delta$-part of the class $MHT_{1*}([M])$ equals
\begin{equation}\label{sign2} 
    (-1)^\delta \cdot (-1)^{w/2} \cdot \sigma(S_p) \cdot [\Sigma] \:,
    \end{equation}
    which by definition vanishes if $w$ is odd.
    It is now immediate to check that (the two signs in) \eqref{sign1} and \eqref{sign2} agree when $w$ is even, thus completing the proof.
\end{proof}

As a consequence, we get immediately the following result from \cite[Proposition 3.3]{FPS2}:
\begin{corollary}\label{cor-1dim}
    Conjecture \ref{BSY} holds for a compact complex algebraic variety $X$ if the dimension $\delta_X$ of the non-rational homology locus of $X$ is one.
\end{corollary}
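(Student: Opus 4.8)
The plan is to upgrade the degree-$\delta_X$ coincidence of the preceding Proposition to a full equality of homology classes, exploiting that only one further homological degree needs to be controlled and that this remaining degree is taken care of by the scalar Hodge index theorem~\eqref{ihif}. First I would recall the reduction explained just before Theorem~\ref{t410}: since $[IC'^H_X]=[\bQ^H_X]+[M_X]$ with $M_X$ the cone of the comparison morphism~\eqref{comp}, and since $[\bQ^H_X]$ satisfies property~(H), one has
$$L_*(X)-IT_{1*}(X)=(L_{*}\circ \Pol)([M_X])-MHT_{1*}([M_X])\in H_*(X)\otimes\bQ.$$
Because $M_X$ is supported on $\Sigma_X$, and because $\Pol$, $MHT_{1*}$ and $L_*$ all commute with the (proper) pushforward along the closed inclusion $\Sigma_X\hookrightarrow X$, the right-hand side lies in the image of $H_*(\Sigma_X)\otimes\bQ$. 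As $\Sigma_X$ has complex dimension $\delta_X=1$, we have $H_k(\Sigma_X)\otimes\bQ=H_{2k}(\Sigma_X;\bQ)=0$ for $k\ge 2$, so $L_*(X)-IT_{1*}(X)$ has possibly nonzero homogeneous components only in homological degrees $0$ and $1$.

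Next I would dispose of the degree-$1$ component, which is the genuine heart of the argument and the only place Theorem~\ref{t410} is used. Writing $[M_X]$ as a finite alternating sum of classes of pure Hodge modules (passing to the cohomology modules of $M_X$, their weight filtrations, and the strict support decomposition), each summand is the pushforward, along a closed inclusion, of some $IC_Z^H(\bV)$ with $Z\subseteq\Sigma_X$ irreducible of dimension at most one. For a one-dimensional $Z$, Theorem~\ref{t410} with $\Sigma=Z$ and $\delta=1$ gives that $(L_{*}\circ \Pol)$ and $MHT_{1*}$ agree on $[IC_Z^H(\bV)]$ in homological degree $1$; for $\dim Z=0$ that degree-$1$ part is trivially zero. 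By additivity, the degree-$1$ component of $(L_{*}\circ \Pol)([M_X])-MHT_{1*}([M_X])$ vanishes; this is exactly the statement of the preceding Proposition in the case $\delta_X=1$.

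Finally, for the degree-$0$ component I would push forward to a point. Since $L_*$ and $IT_{1*}$ split as direct sums over the connected components of $X$, we may assume $X$ connected, so that the pushforward $a_{X*}\colon H_0(X)\otimes\bQ\to H_0(pt)\otimes\bQ=\bQ$ along the constant map is an isomorphism. As $a_{X*}L_*(X)=\sigma(X)$ and $a_{X*}IT_{1*}(X)=I\chi_1(X)$, the Hodge index theorem for singular varieties~\eqref{ihif} forces the degree-$0$ component of $L_*(X)-IT_{1*}(X)$ to vanish as well. Combining the three degree ranges yields $L_*(X)=IT_{1*}(X)$, i.e. Conjecture~\ref{BSY} holds for $X$. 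The only substantial ingredient is the degree-$1$ statement, namely Theorem~\ref{t410}, whose proof rests on the careful matching of the sign contributions~\eqref{sign1} and~\eqref{sign2}; the vanishing in degrees $\ge 2$ is immediate from $\dim\Sigma_X=1$, and the degree-$0$ vanishing comes for free from the already established scalar identity $\sigma(X)=I\chi_1(X)$.
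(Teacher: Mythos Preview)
Your proof is correct and follows essentially the same approach as the paper's: both arguments observe that the difference $L_*(X)-IT_{1*}(X)$ is supported on $\Sigma_X$ (hence vanishes in degrees $\geq 2$), invoke Theorem~\ref{t410} (equivalently, the preceding Proposition) for the degree-$1$ component, and reduce the degree-$0$ component to the scalar Hodge index identity~\eqref{ihif} after passing to connected components. Your version merely unpacks the degree-$1$ step by explicitly decomposing $[M_X]$ into pure pieces with irreducible strict support, which is implicit in the paper's one-line appeal to Theorem~\ref{t410}.
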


\begin{proof}
 By Theorem \ref{t410}, only the equality of both characteristic classes in degree $0$ needs to be checked. Here we can assume $X$ is connected with
 $$deg: H_0(X;\bQ)\simeq \bQ \:,$$
 so that this claim follows from \eqref{ihif}.
\end{proof}

\begin{example}\label{ex-1dim}
 Let $X$ be a compact complex algebraic variety of pure dimension, whose singular locus $X_{sing}$ is one-dimensional (e.g., $X$ is in addition  normal of complex dimension three). Then Conjecture \ref{BSY} holds for $X$.
 In particular, the conjecture holds for $X$ a compact complex algebraic variety of pure dimension three (via normalization and Remark \ref{r14}).
\end{example}

\subsection{Comparison of Atiyah-Meyer type formulae for smooth pure Hodge modules}\label{subsec-AM}

Let us recall from the proof of Lemma \ref{lem411}
the following Atiyah-Meyer type formula \eqref{clms}
from \cite[Theorem 11]{CLMS} (see also \cite[Theorem 4.1]{MS0} for the normalized version discussed here).

If $(\bV,S)$ is a pure polarizable variation of $\bQ$-Hodge structures of weight $w$ on a smooth complex algebraic variety $X$ of pure dimension $d_X$, then 
\begin{equation}\label{clms2}
    MHT_{1*}([\bV^H])=\left( ch^*_{(2)}(\chi_1(\cV)) \cup T_1^*(X) \right) \cap [X]\:,
\end{equation}
where $\bV^H$ is the (shifted) smooth Hodge module on $X$ associated to $\bV$. Here
\begin{equation}\label{chi-1-class}
 \chi_1(\cV):=\sum_p (-1)^p\cdot [Gr^p_\cF \cV]  \in K^0(X)
\end{equation}
is the corresponding K-theoretical $\chi_1$-class in the Grothendieck group $K^0(X)$ of algebraic vector bundles on $X$, and $\cV:=\bV \otimes \cO_X$  is the flat bundle with connection, whose sheaf of horizontal sections is $\bV \otimes \bC$. The bundle $\cV$ comes equipped with its Hodge (decreasing) filtration by holomorphic sub-bundles $\cF^p$ (which satisfy Griffiths' transversality).
In the following it is enough to consider the underlying topological class $For( \chi_1(\cV))\in K^0_{top}(X)$ in the Grothendieck group of topological complex vector bundles on $X$,
with 
$$ch^*_{(2)}= ch^*\circ \Psi^2 $$
the composition of the topological Chern character with the second Adams operation, so that
\begin{equation}\label{clms3}
    MHT_{1*}([\bV^H])=\left( ch^*\circ \Psi^2(For(\chi_1(\cV))) \cup T_1^*(X) \right) \cap [X]\:.
\end{equation}
Here $\Psi^2$ is fixed by the splitting principle together with $\Psi^2(\cL)=\cL\otimes \cL$ for a complex line bundle $\cL$, so that 
$$ ch^*\circ \Psi^2(\cL) = 1+e^{c^1(\cL\otimes \cL)} =
1+e^{2c^1(\cL)} \:.$$
The underlying topological complex vector bundle of $\cV$  has a natural real structure so that
as a topological complex vector bundle one gets an orthogonal decomposition (as in \cite[Corollary 3.8]{S}):
$$\cV = \bigoplus_{p+q=w} \cH^{p,q} \quad\text{, \ with} \quad \cH^{p,q} = F^p\cV \cap \overline{ F^q\cV} = \overline{\cH^{q,p}}\:,$$
and
\begin{equation}
For(\chi_1(\cV)) = \bigoplus_{p\: even,q}\: [\cH^{p,q}]
- \bigoplus_{p\: odd,q}\: [\cH^{p,q}]\in K^0_{top}(X)\:.
\end{equation}
Moreover, $For(\chi_1(\cV))$ fits by \cite[Corollary 3.8]{S}
with a corresponding class  
$$[(\bV,(-1)^{w(w+1)/2}\cdot S)]_K$$  introduced by Meyer \cite[Section 4]{Mey}
for the underlying local system $\bV$ with the pairing 
$$(-1)^{w(w+1)/2}\cdot S\:.$$ 
We follow here the more recent 
presentation given in \cite[Section 2]{RW}.\\

Let $L$ be a self-dual local system of real vector spaces on $X$ (here $X$ could be an oriented manifold) given by a non-degenerate pairing $S': L\otimes L\to \bR_X$. Then one defines  a K-theoretical {\it signature class}
$$[(L, S')]_K \in K^0_{top}(X)$$
as follows. On the associated flat real vector bundle $\cL$ one can find a Riemannian metric $\langle -, -\rangle$
and a linear operator $A: \cL\to \cL$, with
$S'(-,-)=\langle -, A -\rangle$ and $A^2=\pm id$.
\begin{enumerate}
 \item Assume $S'$ is symmetric. Then $A^*=A$ and $A^2=id$, so that $\cL=\cL^+\oplus \cL^-$ with $A\vert_{\cL^{\pm}}=\pm id$,
 and 
 $$[(L, S')]_K:= [\cL^+\otimes_\bR \bC] - [\cL^-\otimes_\bR \bC] \in K^0_{top}(X)\:.$$
\item  Assume $S'$ is skew-symmetric. Then $A^*=-A$ and $A^2=-id$, so that $A$ defines a complex structure on $\cL$, with
 $$[(L, S')]_K:= [\cL,-A] - [\cL,A] =
[(\cL\otimes_\bR \bC)^-] - [(\cL\otimes_\bR \bC)^+] \in K^0_{top}(X)\:.$$
Here $\cL\otimes_\bR \bC=(\cL\otimes_\bR \bC)^+\oplus 
(\cL\otimes_\bR \bC)^-$, with 
$A\vert_{(\cL\otimes_\bR \bC)^{\pm}}=\pm i\cdot id$.
 \end{enumerate}

Let now $(\bV,S)$ be a pure polarizable variation of $\bQ$-Hodge structures of weight $w$ on a smooth complex algebraic variety $X$ of pure dimension $d_X$, with 
$(L,S):=(\bV_\bR,S_\bR)$.
\begin{enumerate}
    \item Assume $w$ is even. Then the pairing $S(-,C(-))=
    (-1)^{w/2}\cdot S(-,(-1)^{w/2}\cdot C(-))$ is symmetric and positive definite, with $C$ the Weil operator (defined over $\bR$) acting by $i^{p-q}$ on $\cH^{p,q}$. Here we can take 
    $$\langle -, -\rangle=(-1)^{w/2}\cdot 
    S(-,(-1)^{w/2}\cdot C(-))$$ for the {symmetric} pairing ${S'=}(-1)^{w/2}\cdot S$, with $A=(-1)^{w/2}\cdot C$:
    $$\langle -, (-1)^{w/2}\cdot C(-)\rangle =(-1)^{w/2}\cdot S(-,-)\:.$$
    Then $A=(-1)^{w/2}\cdot C$ acts by $(-1)^p$ on $\cH^{p,q}$, {and $A^2=id$}, so that
    $$For(\chi_1(\cV)) =[(\bV,(-1)^{w/2}\cdot  S)]_K{ =[(\bV,(-1)^{w(w+1)/2}\cdot  S)]_K }\in K^0_{top}(X)\:.$$
   \item Assume $w$ is odd. Then the pairing 
     $(-1)^{(w+1)/2}\cdot S(-,(-1)^{(w+1)/2}\cdot C(-))$ is symmetric and positive definite, with $C$ the Weil operator (defined over $\bR$) acting by $i^{p-q}$ on $\cH^{p,q}$ as before. 
Here we can take 
    $$\langle -, -\rangle=(-1)^{(w+1)/2}\cdot 
    S(-,(-1)^{(w+1)/2}\cdot C(-))$$ for the {skew-symmetric} pairing ${S'=}(-1)^{(w+1)/2}\cdot S$, with 
    $A=-(-1)^{(w+1)/2}\cdot C = (-1)^{(w-1)/2}\cdot C $:
  $$\langle -, (-1)^{(w-1)/2}\cdot C (-)\rangle=(-1)^{(w+1)/2}\cdot S(-,-) \:.$$   
Then 
    $A=(-1)^{(w-1)/2}\cdot C$ acts by $(-1)^p\cdot i^{-1}$ on $\cH^{p,q}$,  and $A^2=-id$, so that
    $$For(\chi_1(\cV)) =[(\bV,(-1)^{(w+1)/2}\cdot  S)]_K=[(\bV,(-1)^{w(w+1)/2}\cdot  S)]_K  \in K^0_{top}(X)\:.$$ 
\end{enumerate}
    
    Assume now that $X$ is in addition compact
    (here $X$ could be a closed oriented manifold of even real dimension $2d_X$). Then Meyer shows in \cite[Theorem II 4.1]{Mey} (using the formulation of \cite[Equation (2.1)]{RW}) the {\it signature formula}
    \begin{equation}\label{sig-Meyer}
     \sigma(H^{d_X}(X,\bV))=\int_{[X]} \:
     ch^*\circ \Psi^2([(\bV,(-1)^{w(w+1)/2}\cdot S)]_K)\cup L^*(X)
    \end{equation}
    for the pairing induced by $S'=(-1)^{w(w+1)/2}\cdot S$
    on $L=\bV$.

\begin{remark}
Here we use (as in \cite[Section 2]{RW}) the signature of the pairing
\begin{equation*}\begin{CD}
H^{d_X}(X,L)\times H^{d_X}(X,L)@> \cup >> H^{2d_X}(X,L\otimes L)
@> S' >> H^{2d_X}(X,\bR_X) @> \int_{[X]} >> \bR \:,
\end{CD}\end{equation*}
whereas Meyer in \cite[Theorem II 4.1]{Mey} introduces for the case $d_X$ odd and $L$ skew-symmetric an additional minus sign for this pairing (see \cite[pp. 12-13]{Mey}).
This is also the reason why Meyer uses in this case the negative K-class $\:-[(L, S')]_K= [\cL,-A'] - [\cL,A'] $ for the operator $A'=-A$, so that 
$S'(-,A'(-))=\langle -, -\rangle$
is a Riemannian metric $\langle -, -\rangle$.
(In the other case $d_X$ even and $L$ symmetric one has $A'=A$.)
\end{remark}

    Since even shifts do not introduce signs, the 
{\it signature formula} \eqref{sig-Meyer}  
    implies the topological {\it Atiyah-Meyer type formula}
    (see also \cite{BCS} for a more general version for a singular $X$):
\begin{equation}\label{top-A-Meyer}
  L_*^{CS}([\bV[2d_X], \alpha [2d_X]])   = \left(
     ch^*\circ \Psi^2([(\bV,(-1)^{w(w+1)/2}\cdot S)]_K)\cup L^*(X)\right) \cap [X]
    \end{equation}
 for the induced duality pairing on $IC_X(\bV)[d_X]=\bV[2d_X]$ given by the pairing
 $(-1)^{w(w+1)/2}\cdot S$ on $\bV$.
 
This implies the following. 
\begin{thm}\label{comparison-smooth}
Let $(\bV,S)$ be a pure polarizable variation of $\bQ$-Hodge structures of weight $w$ on a compact smooth complex algebraic variety $X$ of pure dimension $d_X$, 
with $\bV^H$ the (shifted) smooth Hodge module on $X$ associated to $\bV$. Then
\begin{equation}
    MHT_{1*}([\bV^H])  =  
    (L_* \circ \Pol)([\bV^H]) \:.  
 \end{equation}
\end{thm}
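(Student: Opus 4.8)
The plan is to deduce Theorem \ref{comparison-smooth} directly from the topological Atiyah--Meyer formula \eqref{top-A-Meyer} together with the identification of the forgetful class $For(\chi_1(\cV))$ with Meyer's K-theoretic signature class $[(\bV,(-1)^{w(w+1)/2}\cdot S)]_K$, which has already been established in the paragraphs preceding the statement (via \cite[Corollary 3.8]{S}, split into the cases $w$ even and $w$ odd). Concretely, the right-hand side of \eqref{top-A-Meyer} is, by \eqref{clms3}, exactly $MHT_{1*}([\bV^H])$, so \eqref{top-A-Meyer} already says
\[
MHT_{1*}([\bV^H]) = L_*^{CS}([\bV[2d_X],\alpha[2d_X]]),
\]
for the duality pairing on $\bV[2d_X]=IC_X(\bV)[d_X]$ induced by the pairing $(-1)^{w(w+1)/2}\cdot S$ on $\bV$.

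The remaining task is then purely bookkeeping of shifts and signs: first, to rewrite $L_*^{CS}([\bV[2d_X],\alpha[2d_X]])$ in terms of the author's corrected (unshifted) $L$-class transformation $L_*$ on $\Omega_\bR(X)$ evaluated on the class $[(\bV[d_X],\alpha[d_X])]$, and second, to identify that cobordism class with $\Pol([\bV^H])$. For the first point I would invoke Example \ref{ex-twIC1} (applied with $\cL=\bV_\bR$, $Z=X$ smooth): the comparison there between $L_*$ and $L_*^{CS}$ on twisted intersection cohomology complexes contributes only a factor $(-1)^{d_X}$ in the skew-symmetric case and none in the symmetric case. Since $\bV[d_X]$ with the pairing $(-1)^{w(w+1)/2}\cdot S$ is $(-1)^{d_X}$-symmetric precisely when $(-1)^{w(w+1)/2}\cdot S$ is symmetric, i.e., in all relevant cases one is in item (1) or (2) of the implications in Example \ref{ex-twIC1}, and one checks that the sign $(-1)^{d_X}$ either does not appear or lands on a class whose signature vanishes (as in Lemma \ref{point}, when $w$ is odd). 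For the second point, Remark \ref{ext-pol} (the smooth specialization of the argument in the proof of the commutativity of diagram \eqref{diag2}) gives, via the identification $S^H = (-1)^{d_X(d_X-1)/2}S_{can}$ of \eqref{Sai} applied to the variation $\bV$, that
\[
\Pol([\bV^H]) = [(\bV_\bR[d_X],\, (-1)^{(w+d_X)(w+d_X+1)/2}\,\alpha_{S^H_\bR})] \in \Omega_\bR(X),
\]
since $\bV^H[d_X]$ is pure of weight $w+d_X$; expanding $\alpha_{S^H}$ in terms of $\alpha_{S_{can}}$ and combining the exponents $(w+d_X)(w+d_X+1)/2$, $d_X(d_X-1)/2$, and the sign $(-1)^{w(w+1)/2}$ built into the pairing used in \eqref{top-A-Meyer}, one must verify that everything collapses to the claimed identity.

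The main obstacle I expect is precisely this final sign reconciliation: one has to chase the shift-sign conventions of Remark \ref{sign} (the isomorphisms $th_n$ of \eqref{th1} and their failure to commute \eqref{th2}) carefully through three separate places — the passage from $[\bV[2d_X]]$ to $[\bV[d_X]]$ in the cobordism group, the sign $(-1)^{w(w+1)/2}$ in the definition \eqref{pol} of $\Pol$, and the sign $(-1)^{d_X(d_X-1)/2}$ relating $S^H$ to $S_{can}$ — and show that the parities of $(w+d_X)(w+d_X+1)/2$ versus $w(w+1)/2 + d_X(d_X-1)/2 + (\text{shift signs})$ always match. I would organize this by reducing to the two cases $w$ even and $w$ odd (as was done for the identification of $For(\chi_1(\cV))$), using that when $w$ is odd all the $L$-classes in question vanish by Lemma \ref{point}, so the identity is only substantive for $w$ even, where $(-1)^{w(w+1)/2}=(-1)^{w/2}$ and the arithmetic is more tractable. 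As a sanity check I would verify the trivial variation case $\bV=\bQ_X$ (weight $0$), where the statement must reduce to the already-established commutativity of diagram \eqref{diag3} on $[id_X]$ together with $L_*([\bR_X[d_X],\alpha_{can}]) = L^*(TX)\cap[X]$ and $MHT_{1*}([\bQ_X^H]) = T_{1*}(X)$.
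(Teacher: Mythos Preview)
Your overall strategy matches the paper's proof: identify the right-hand side of \eqref{top-A-Meyer} with $MHT_{1*}([\bV^H])$ via \eqref{clms3} and the equality $For(\chi_1(\cV))=[(\bV,(-1)^{w(w+1)/2}S)]_K$, then match the left-hand side with $(L_*\circ\Pol)([\bV^H])$ by unwinding the definition of $\Pol$ and the comparison of $L_*$ with $L_*^{CS}$ from Example~\ref{ex-twIC1}. The paper runs the chain in the opposite direction (from $\Pol$ to $MHT_{1*}$), but the content is the same.

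There is, however, a genuine error in your proposed organization of the sign check: it is \emph{not} true that the $L$-classes vanish when $w$ is odd. Lemma~\ref{point} concerns a single Hodge structure over a point, not a variation; for a variation of odd weight, Meyer's skew-symmetric signature class $[(\bV,S')]_K=[\cL,-A]-[\cL,A]\in K^0_{top}(X)$ is generally nonzero, and both sides of the theorem can be nonzero. So the odd-weight case cannot be dismissed and must be checked.

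The paper handles all parities uniformly. Using $[\bV^H]=(-1)^{d_X}[\bV^H[d_X]]$ (a shift sign you omitted from your list of exponents) together with \eqref{Sai}, one obtains
\[
(L_*\circ\Pol)([\bV^H]) = (-1)^{(w+d_X)(w+d_X+1)/2}\,(-1)^{d_X(d_X-1)/2}\,(-1)^{d_X}\, L_*([\alpha_{S_{can}}]),
\]
and the elementary identity
\[
\tfrac{(w+d_X)(w+d_X+1)}{2}+\tfrac{d_X(d_X-1)}{2}+d_X \equiv wd_X+\tfrac{w(w+1)}{2}\pmod 2
\]
collapses this to $(-1)^{wd_X}\,L_*\bigl((-1)^{w(w+1)/2}[\alpha_{S_{can}}]\bigr)$. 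The factor $(-1)^{w(w+1)/2}$ is absorbed into the pairing defining $\alpha$ in \eqref{top-A-Meyer}, and the residual $(-1)^{wd_X}$ is \emph{exactly} the sign relating $L_*$ to $L_*^{CS}$ in Example~\ref{ex-twIC1}: inspecting cases (1)--(4) there, the only discrepancy (a minus sign) occurs precisely when both $w$ and $d_X$ are odd. This closes the argument without any appeal to vanishing in the odd-weight case.
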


\begin{proof}
 Let $\alpha_{S_{can}}$ be the duality induced on $\bV[d_X]$ via the polarization $S$. Then by definition  
\begin{eqnarray*}
    (L_* \circ \Pol)([\bV^H]) 
    &=& (-1)^{(w+d_X)(w+d_X+1)/2} \cdot (-1)^{d_X(d_X-1)/2} \cdot (-1) ^{d_X}\cdot L_*([\alpha_{S_{can}}])\\
    &=& (-1)^{w\cdot d_X} \cdot (-1)^{w(w+1)/2}\cdot 
     L_*([\alpha_{S_{can}}])\\
      &=& (-1)^{w\cdot d_X} \cdot
      L_*\left( (-1)^{w(w+1)/2}\cdot [\alpha_{S_{can}}]\right)\:.
\end{eqnarray*} 
with the sign $(-1)^{d_X}$ coming from $[\bV^H]=(-1)^{d_X}\cdot [\bV^H[d_X]]$. Using Example \ref{ex-twIC1},
we get by the Atiyah-Meyer type formulae \eqref{clms2}
and \eqref{top-A-Meyer} for the induced duality $\alpha$ coming from $(-1)^{w(w+1)/2}\cdot S$:

\begin{eqnarray*}
    (L_* \circ \Pol)([\bV^H]) 
    &=&  L_*^{CS}([\bV[2d_X], \alpha [2d_X]])\\
    &=& \left(
     ch^*\circ \Psi^2([(\bV,(-1)^{w(w+1)/2}\cdot S)]_K)\cup L^*(X)\right) \cap [X]\\
     &=& \left( ch^*\circ \Psi^2(For(\chi_1(\cV))) \cup T_1^*(X) \right) \cap [X]\\
     &=&  MHT_{1*}([\bV^H])\:.
\end{eqnarray*} 
\end{proof}

Following up on Example \ref{iso}, let $\Sigma_X$
be the non-rational homology locus   of $X$. Recall that the cone $M_X$ of the comparison morphism \eqref{comp} has support $\Sigma_X$.
As
\[ [IC'^H_X] = [\bQ_X^H] + [M_X] \in K_0(\MHM(X)),\]
and since $[\bQ^H_X]$ satisfies property (H), one gets as before that 
\begin{align*}
    L_*(X)-IT_{1*}(X)=(L_{*}\circ \Pol)([M_X])-MHT_{1*}([M_X]).
\end{align*}
Hence Conjecture \ref{BSY} can be further reduced to showing that (as before) the cone $[M_X]$ satisfies property (H).

\begin{corollary}
Let $X$ be a compact and  pure-dimensional complex algebraic variety endowed with a complex algebraic Whitney stratification so that the non-rational homology locus   $\Sigma_X$ of $X$ is a disjoint union of {\it closed}  strata. Then Conjecture \ref{BSY} holds for $X$.
\end{corollary}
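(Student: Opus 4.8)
The plan is to reduce the statement, exactly as in the discussion preceding it, to showing that the cone $M_X$ of the comparison morphism \eqref{comp} satisfies property (H). Indeed $[IC'^H_X]=[\bQ_X^H]+[M_X]$ in $K_0(\MHM(X))$, the class $[\bQ_X^H]=\chi_{Hdg}([id_X])$ satisfies property (H), and property (H) for $[IC'^H_X]$ is precisely Conjecture \ref{BSY} for $X$; so it suffices to check that $[M_X]$ satisfies property (H).

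First I would observe that the hypothesis forces $\Sigma_X$ to be smooth and compact. Writing $\Sigma_X=\bigsqcup_\alpha S_\alpha$ with each $S_\alpha$ a \emph{closed} stratum, the $S_\alpha$ are pairwise disjoint, closed (hence also open in $\Sigma_X$) and smooth, so they are exactly the connected components of $\Sigma_X$; thus $\Sigma_X$ is a smooth closed, hence compact, subvariety of $X$, and the stratification it inherits from $\cS$ is the trivial one on each component. Let $i\colon\Sigma_X\hookrightarrow X$ denote the inclusion. Since $M_X$ is supported on the closed set $\Sigma_X$, one has $M_X\cong i_*i^*M_X$ in $D^b\MHM(X)$. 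Moreover $M_X$ is $\cS$-constructible, being the cone of a morphism of $\cS$-constructible complexes, so by the previous remark the underlying complex of $i^*M_X$ (equivalently $i^!M_X$) has locally constant cohomology sheaves. Hence each perverse cohomology object $H^j(i^*M_X)\in\MHM(\Sigma_X)$ is a \emph{smooth} mixed Hodge module, i.e., up to shift an admissible variation of mixed Hodge structures $\bV_j$ on $\Sigma_X$.

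Next I would pass to $K_0$: in $K_0(\MHM(\Sigma_X))$ one has $[H^j(i^*M_X)]=\sum_k[(\gr^W_k\bV_j)^H]$, and each $\gr^W_k\bV_j$ is a polarizable variation of \emph{pure} Hodge structures on the smooth variety $\Sigma_X$. Applying Theorem \ref{comparison-smooth} on each connected component of $\Sigma_X$, every class $[(\gr^W_k\bV_j)^H]$ satisfies property (H); since $MHT_{1*}$ and $L_*\circ\Pol$ are group homomorphisms, property (H) is stable under $\bZ$-linear combinations, and therefore $[i^*M_X]=\sum_j(-1)^j[H^j(i^*M_X)]\in K_0(\MHM(\Sigma_X))$ satisfies property (H). Finally, by Proposition \ref{commute!} applied to the proper morphism $i$ between the compact varieties $\Sigma_X$ and $X$, the class $[M_X]=i_*[i^*M_X]$ satisfies property (H) on $X$, which completes the argument.

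The step I expect to be the main obstacle is the constructibility claim in the second paragraph: one must be certain that $i^*M_X$ is a smooth Hodge-module complex on $\Sigma_X$. This is exactly where the hypothesis that $\Sigma_X$ is a union of closed strata enters — it guarantees that $\Sigma_X$ contains no proper sub-strata of $\cS$, so that $\cS$-constructibility of $M_X$ degenerates on $\Sigma_X$ to plain local constancy; without this, $i^*M_X$ could be genuinely singular on $\Sigma_X$ and the reduction would break down. A secondary point to verify is the standard fact that the weight-graded pieces of an admissible variation of mixed Hodge structures on a smooth base are polarizable variations of pure Hodge structures, which is what makes Theorem \ref{comparison-smooth} applicable to them.
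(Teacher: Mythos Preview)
Your proposal is correct and follows essentially the same approach as the paper: both reduce to showing $[M_X]$ satisfies property (H), use that $\Sigma_X$ is smooth with trivial induced stratification so that $\rat(M_X)$ has locally constant cohomology there, decompose in $K_0$ via cohomology and weight filtration into shifted polarizable pure variations, and invoke Theorem \ref{comparison-smooth}. Your version is slightly more explicit in separating the restriction to $\Sigma_X$ from the proper pushforward back to $X$ via Proposition \ref{commute!}, whereas the paper works directly with $M_X$ on $X$ and appeals more tersely to an ``inductive application'' of Theorem \ref{comparison-smooth}.
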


\begin{proof}
By our assumptions, $\rat(M_X)$ is supported on $\Sigma_X$
and constructible with respect to the  induced (Whitney) stratification of  $\Sigma_X$, which is just a disjoint union of closed complex algebraic submanifolds. Then all cohomology sheaves $\cH^j(\rat(M_X))$ of $\rat(M_X)$ are locally constant ($j\in\bZ$), so that all weight graded pieces of $H^j(M_X)$
are smooth and pure Hodge modules corresponding to
pure polarizable variations of $\bQ$-Hodge structures on $\Sigma_X$
(up to a shift). Then $[M_X]$ satisfies property (H) by an inductive application of Theorem \ref{comparison-smooth}.
\end{proof}

\subsection{(Virtually) stratum-wise constant complexes}
In order to apply the results of Section \ref{secH} to specific geometric situations, we first describe the Grothendieck class of a complex $M \in D^b\MHM(X)$, compare with \cite[Proposition 5.1.2]{MSS}:

\begin{proposition} \label{ccs}
For a complex algebraic variety $X$, fix $M \in D^b\MHM(X)$ with underlying bounded constructible complex $K:=\rat(M)\in D^b_c(X;\bQ)$.
Let $\cS=\{S\}$ be a complex algebraic stratification of $X$ so
that for any stratum $S\in\cS$, $S$ is smooth, ${\bar S}\setminus S$ is a union of
strata, and the sheaves $L_{S,\ell}:=\cH^{\ell}K|_S$ are local systems on $S$ for any $\ell \in \bZ$.
If $j_S:S \hookrightarrow X$ is the inclusion map of a stratum $S\in \cS$, then: 
\begin{equation} \label{f1} [M] 
=\sum_{S,\ell}\,(-1)^{\ell}\,\big[(j_S)_!L_{S,\ell}^H \big]
\in K_0(\MHM(X)), 
\end{equation}
where $L_{S,\ell}^H=H^{\ell + \dim (S)} (j_S)^* M[-\dim(S)]$ is the shifted smooth mixed Hodge module on $S$ with $L_{S,\ell}=rat(L_{S,\ell}^H)$. 

If, moreover, 
the stratification $\cS$ of $X$ can be chosen so that the local systems $L_{S,\ell}$ on $S$ are constant for any $S$ and $\ell$,  then 
\begin{equation}\label{f4b}
[M]=\sum_{S}\,\big[(j_{S})_!\bQ_S^H \big] \cdot i_s^*[M],
\end{equation}
where, for $s\in S$ fixed, $i_s: \{s\} \hookrightarrow X$ denotes the inclusion map.
\end{proposition}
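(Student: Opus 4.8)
The plan is to prove \eqref{f1} first, by a dévissage along the stratification, and then to deduce \eqref{f4b} from it using the rigidity of polarizable variations of Hodge structure. The starting point is a local computation on a single stratum: if $S$ is smooth and $N\in D^b\MHM(S)$ has underlying complex with locally constant cohomology sheaves $L_\ell:=\cH^\ell\rat(N)$, then each cohomology module $H^k(N)\in\MHM(S)$ is a smooth mixed Hodge module whose underlying perverse sheaf is $L_{k-\dim S}[\dim S]$; writing $L^H_\ell:=H^{\ell+\dim S}(N)[-\dim S]$ and using the definition of the class of a complex together with $[C[n]]=(-1)^n[C]$ in $K_0$, one obtains $[N]=\sum_\ell(-1)^\ell[L^H_\ell]$ in $K_0(\MHM(S))$. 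Since the derived functor $(j_S)_!\colon D^b\MHM(S)\to D^b\MHM(X)$ is triangulated, it induces a homomorphism on Grothendieck groups, so applying it termwise turns this into an identity in $K_0(\MHM(X))$.

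For \eqref{f1} I would then induct on the number of strata. Let $U$ be the open, smooth union of the top-dimensional strata, $j\colon U\hookrightarrow X$ the open inclusion and $i\colon Z\hookrightarrow X$ the complementary closed inclusion, where $Z$ is given the induced stratification; one checks it satisfies the hypotheses with respect to $i^*M$ and that the resulting $L^H_{S,\ell}$ for $S\subseteq Z$ coincide with those defined from $M$ (since $i\circ j^Z_S=j_S$). The triangle $j_!j^*M\to M\to i_*i^*M\xrightarrow{+1}$ in $D^b\MHM(X)$ yields $[M]=[j_!j^*M]+[i_*i^*M]$; the local computation applied componentwise on $U$ (together with $(j_U)_!$ restricting to $(j_S)_!$ on each stratum) handles the first summand, and the inductive hypothesis on $Z$ together with $i_*(j^Z_S)_!=(j_S)_!$ handles the second, giving \eqref{f1}.

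To pass from \eqref{f1} to \eqref{f4b}, assume now that the $L_{S,\ell}$ are constant and, after refining $\cS$ if necessary (which preserves all hypotheses), that every stratum is connected. Fix $S$ and $\ell$ and decompose the smooth Hodge module $H^{\ell+\dim S}((j_S)^*M)$ along its weight filtration: this is a filtration by sub-objects in the abelian category $\MHM(S)$, hence splits in $K_0(\MHM(S))$, and each weight-graded piece is a polarizable variation of Hodge structure whose underlying local system --- being a subquotient of a trivial local system on a connected base --- is itself trivial. By rigidity, a polarizable variation of Hodge structure with trivial monodromy has flat Hodge bundles and is therefore constant, so each such piece is isomorphic to $(a_S)^*$ of its stalk, where $a_S\colon S\to pt$. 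Summing over weights gives, in $K_0(\MHM(S))$, the identity $[L^H_{S,\ell}]=[\bQ_S^H]\cdot[(i'_s)^*L^H_{S,\ell}]$ for the $K_0(\MHM(pt))$-module structure, with $i'_s\colon\{s\}\hookrightarrow S$; pushing forward by $(j_S)_!$, which is $K_0(\MHM(pt))$-linear since exterior product commutes with pushforward in the first variable, yields $[(j_S)_!L^H_{S,\ell}]=[(j_S)_!\bQ_S^H]\cdot[(i'_s)^*L^H_{S,\ell}]$ in $K_0(\MHM(X))$. Substituting into \eqref{f1}, regrouping the sum over $S$, and recognizing that $\sum_\ell(-1)^\ell[(i'_s)^*L^H_{S,\ell}]=(i'_s)^*[(j_S)^*M]=[i_s^*M]=i_s^*[M]$ (by the local computation and $i_s=j_S\circ i'_s$) then gives \eqref{f4b}.

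The entire argument is formal manipulation in Grothendieck groups via Saito's six-functor formalism; the only genuinely non-formal ingredient is the rigidity statement used for \eqref{f4b}. The step I expect to require the most care is the shift and Tate-twist bookkeeping --- the signs $(-1)^\ell$, the precise relation between the objects $L^H_{S,\ell}$ (sitting in cohomological degree $0$) and the honest smooth Hodge modules $H^{\ell+\dim S}((j_S)^*M)$, and the compatibility of $(j_S)_!$, $(i'_s)^*$ and the exterior product with the $K_0(\MHM(pt))$-module structure.
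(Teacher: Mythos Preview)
Your proposal is correct and follows essentially the same route as the paper: induction on strata via the attaching triangles $j_!j^*M\to M\to i_*i^*M$ for \eqref{f1}, and rigidity plus the $K_0(\MHM(pt))$-module structure for \eqref{f4b}. The only cosmetic difference is that the paper invokes rigidity directly for the admissible variation of mixed Hodge structures $L_{S,\ell}^H$ (citing \cite{CMS}) to get an honest isomorphism $L_{S,\ell}^H\simeq \bQ_S^H\boxtimes L_{S,s,\ell}^H$, whereas you first split off the weight filtration in $K_0$ and apply rigidity to the pure graded pieces; since the target is $K_0$ either way, the two arguments are equivalent.
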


\begin{proof}
Formula \eqref{f1} follows by induction on strata, using the standard attaching triangles, see also \cite{MSS}.

If the local system $L_{S,\ell}$ on $S$ is constant, it follows by rigidity (e.g., see \cite[Section 3.1]{CMS}) that the (admissible) variation of mixed Hodge structures $L^H_{S,\ell}$ on $S$ (corresponding to a smooth mixed Hodge module as in the statement) is the constant variation. This implies that
$$L^H_{S,\ell}\simeq  \bQ_S^H \boxtimes L^H_{S, s, \ell},$$
where $L_{S, s, \ell}$ is the stalk of $L_{S, \ell}$ at $s \in S$, which underlies the mixed Hodge structure $L^H_{S, s, \ell}$.
Using the $K_0(\MHM(pt))$-module structure of $K_0(\MHM(X))$, formula \eqref{f1} becomes:
\begin{equation}\label{f4}
\begin{split}
 [M]
=\sum_{S,\ell}\,(-1)^{\ell}\, \big[(j_{S})_!\bQ_S^H \big] \cdot [L^H_{S, s, \ell}]
=\sum_{S}\,\big[(j_{S})_!\bQ_S^H \big] \cdot [M_s]
=\sum_{S}\,\big[(j_{S})_!\bQ_S^H \big] \cdot i_s^*[M],
\end{split}
\end{equation}
with $M_s:=i_s^*M$.
\end{proof}

\begin{definition}\label{sc}
Following \cite{MS2}, we call $M \in D^b\MHM(X)$ a {\it stratum-wise constant} complex of mixed Hodge modules (with respect to an algebraic stratification $\cS$ as above)  if the cohomology sheaves of the underlying constructible complex $K=rat(M)\in D^b_c(X;\bQ)$ are constant along the strata of the stratification. More generally, we call a class $[M]\in K_0(\MHM(X))$ {\it virtually stratum-wise constant} if equality \eqref{f4b} holds.
\end{definition}

\begin{remark}
By Proposition \ref{ccs}, a stratum-wise constant complex of mixed Hodge modules $M\in D^b\MHM(X)$ represents a virtually stratum-wise constant class $[M]\in K_0(\MHM(X))$.
\end{remark}

\begin{example} \label{ex411}
If, in the notations of Proposition \ref{ccs}, all strata of the stratification $\cS$ of $X$ are simply connected, then all local systems $L_{S,\ell}$ are constant, and $M$ is  stratum-wise constant, hence
$[M]\in K_0(\MHM(X))$ is virtually stratum-wise constant. In particular, this applies to varieties which admit algebraic cellular stratifications (i.e., all strata are isomorphic to complex vector spaces). Such examples include:
\begin{itemize}
    \item Schubert varieties (i.e., closures of Schubert cells given by $B$-orbits) in generalized flag varieties $G/P$, for $G$ a  connected reductive complex algebraic group, $P$ a parabolic subgroup, and $B$ a Borel subgroup of $G$ contained in $P$.
    \item matroid Schubert varieties (e.g., see \cite[Corollary A.2]{Cr}).
\end{itemize}
\end{example}

\begin{example} If, in the notations of Proposition \ref{ccs}, all the local systems $L_{S,\ell}$ on $S$ underlie {\it unipotent} variations of mixed Hodge structures (i.e., the graded parts $Gr_k^W L_{S,\ell}$ of the weight filtration are constant, for all $k \in \bZ$), then $[M]\in K_0(\MHM(X))$ is virtually stratum-wise constant.
\end{example}

\begin{remark}\label{exvcc}
Denote by $[L^H] \in K_0(\MHM(pt))$ the Grothendieck class of a (polarizable) mixed Hodge structure $L^H$. Then, in the notations of Proposition \ref{ccs}, for a fixed stratum $S \in \cS$ of $X$, the class
$$ \big[(j_{S})_!\bQ_S^H \big] \cdot [L^H]=\big[(j_{S})_! L_S^H] =: [M_S],$$ where $L^H_S=\bQ_S^H  \boxtimes L^H$, is virtually stratum-wise constant, with $i^*_s[M_S]=[L^H]$ for $s\in S$  and zero for all other stalks. This shows that a class $[M]\in K_0(\MHM(X))$ is virtually stratum-wise constant if, and only if, it belongs to the (free) $K_0(\MHM(pt))$-submodule of $K_0(\MHM(X))$ generated by classes $\big[(j_{S})_!\bQ_S^H \big]=\chi_{Hdg}([j_S:S \hookrightarrow X])$, for $S \in \cS$. 
\end{remark}

In view of Corollary \ref{submodule} and Proposition \ref{ccs}, we get the following;
\begin{thm}\label{cohVconstant}
 Assume $[IC'^H_X]$ is virtually stratum-wise constant. Then $[IC'^H_X]$ belongs to the $K_0(\MHM(pt))=K_0(\MHS^p)$-submodule generated by $\im(\chi_{Hdg}) \subset K_0(\MHM(X))$. So if $X$ is a compact variety, then $[IC'^H_X]$ satisfies property (H), hence Conjecture \ref{BSY} holds for $X$.
\end{thm}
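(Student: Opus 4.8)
The plan is to unwind the hypothesis ``$[IC'^H_X]$ is virtually stratum-wise constant'' into a membership statement in the relevant submodule, and then to invoke the already-established Corollary \ref{submodule}; the identification of property (H) with Conjecture \ref{BSY} is then purely a matter of matching definitions and signs.

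First I would recall, as recorded in Remark \ref{exvcc}, that a class $[M]\in K_0(\MHM(X))$ is virtually stratum-wise constant (with respect to some algebraic stratification $\cS$ as in Proposition \ref{ccs}) if and only if it lies in the $K_0(\MHM(pt))$-submodule of $K_0(\MHM(X))$ generated by the classes $\big[(j_S)_!\bQ_S^H\big]$ for $S\in\cS$. Since $\big[(j_S)_!\bQ_S^H\big]=\chi_{Hdg}([j_S:S\hookrightarrow X])$ lies in $\im(\chi_{Hdg})$ by definition, and since a stratification has only finitely many strata, applying this to $[M]=[IC'^H_X]$ shows that $[IC'^H_X]$ is a finite $K_0(\MHM(pt))$-linear combination of elements of $\im(\chi_{Hdg})$; hence it belongs to the $K_0(\MHM(pt))=K_0(\MHS^p)$-submodule of $K_0(\MHM(X))$ generated by $\im(\chi_{Hdg})$. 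This proves the first assertion, and it requires no compactness assumption on $X$.

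Next, assuming now that $X$ is compact, I would feed $[M]=[IC'^H_X]$ into Corollary \ref{submodule}, obtaining that $[IC'^H_X]$ satisfies property (H), that is,
\[
MHT_{1*}([IC'^H_X])=(L_*\circ\Pol)([IC'^H_X]).
\]
To conclude, I would identify both sides with the classes appearing in Conjecture \ref{BSY}: the left-hand side is $IT_{1*}(X)$ by definition, while by Remark \ref{ext-pol} we have $\Pol([IC'^H_X])=[IC_X,\alpha_{can}]$, and by the normalization of the $L$-class transformation on intersection complexes in Example \ref{ex-twIC1} we have $L_*([IC_X,\alpha_{can}])=L_*(X)$. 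Thus property (H) for $[IC'^H_X]$ is precisely the equality $L_*(X)=IT_{1*}(X)$, i.e., Conjecture \ref{BSY} for $X$.

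There is really no hard step here: all the depth has been pushed into Corollary \ref{submodule}, which itself rests on the deep functoriality of $\Pol$ (Theorem \ref{thpol}) and on the compatibility of $L_*\circ\Pol$ with the $K_0(\MHM(pt))$-module structure (Lemma \ref{modcomp}). The only points one must be attentive to are bookkeeping: that the sum in \eqref{f4b} is finite, that each summand genuinely lies in $\im(\chi_{Hdg})$, and that the sign conventions are compatible so that $\Pol([IC'^H_X])=[IC_X,\alpha_{can}]$ and $L_*([IC_X,\alpha_{can}])=L_*(X)$ hold on the nose --- all of which are taken care of by Remark \ref{ext-pol} and Example \ref{ex-twIC1}.
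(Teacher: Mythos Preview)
Your proof is correct and follows essentially the same approach as the paper, which simply states the theorem as an immediate consequence of Corollary \ref{submodule} and Proposition \ref{ccs} (the latter being what underlies Remark \ref{exvcc}). Your additional care in spelling out the identifications $IT_{1*}(X)=MHT_{1*}([IC'^H_X])$ and $L_*(X)=(L_*\circ\Pol)([IC'^H_X])$ via Remark \ref{ext-pol} and Example \ref{ex-twIC1} is accurate and makes explicit what the paper leaves implicit.
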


\begin{corollary}\label{ts}
Conjecture \ref{BSY} holds for:
\begin{itemize}
\item[(a)] matroid Schubert varieties.
\item[(b)] Schubert varieties in generalized flag varieties.
\item[(c)] any compact toric variety.
\end{itemize}
\end{corollary}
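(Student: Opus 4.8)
The plan is to deduce all three cases from Theorem \ref{cohVconstant}: it suffices to exhibit, for each such $X$, a complex algebraic Whitney stratification $\cS$ with respect to which $IC_X=\rat(IC_X^H)$ is constructible, with each stratum smooth, each stratum closure a union of strata, and the cohomology sheaves $\cH^\ell(IC_X)|_S$ \emph{constant} local systems for every $S\in\cS$ and every $\ell\in\bZ$. Indeed, by the rigidity argument in the proof of Proposition \ref{ccs}, constancy of these local systems forces the associated (admissible) variations of mixed Hodge structures to be constant, so $IC_X^H$ is stratum-wise constant and hence $[IC'^H_X]$ is virtually stratum-wise constant in the sense of Definition \ref{sc}; Theorem \ref{cohVconstant} then gives that $[IC'^H_X]$ lies in the $K_0(\MHM(pt))$-submodule generated by $\im(\chi_{Hdg})$, so it satisfies property (H), and Conjecture \ref{BSY} holds since all the varieties in question are compact.

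For (a) and (b) this is essentially a restatement of Example \ref{ex411}: one takes $\cS$ to be an algebraic cellular stratification --- the Bruhat (Schubert cell) decomposition in case (b), and the cellular stratification of \cite[Corollary A.2]{Cr} in case (a). Every stratum is an affine space, hence simply connected, so every local system on it is constant; thus $IC_X^H$ --- like any object of $D^b\MHM(X)$ --- is stratum-wise constant, and $[IC'^H_X]$ is virtually stratum-wise constant, so we conclude as above.

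For (c) one takes $\cS$ to be the stratification of the (normal) compact toric variety $X$ by its torus orbits; this is a Whitney stratification, each orbit is smooth, orbit closures are unions of orbits, and $IC_X$ is constructible with respect to it. The step I expect to be the main obstacle is that the orbits are \emph{not} simply connected --- an orbit $O$ is isomorphic to $T/T'$ for a subtorus $T'\subseteq T$, hence to $(\bC^*)^k$ --- so Example \ref{ex411} does not apply directly. The remedy is $T$-equivariance: $IC_X$ is canonically $T$-equivariant, being the intermediate extension of the $T$-equivariant constant sheaf on the open orbit, so each $\cH^\ell(IC_X)|_O$ is a $T$-equivariant local system on the homogeneous space $O=T/T'$, and such local systems are classified by representations of $\pi_0(T')$, which is trivial since $T'$ is a connected subtorus. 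Hence $\cH^\ell(IC_X)|_O$ is constant for every orbit $O$ and every $\ell$, so $IC_X^H$ is stratum-wise constant, $[IC'^H_X]$ is virtually stratum-wise constant, and one concludes as above. Finally, for a non-normal compact toric variety one first replaces $X$ by its normalization $Y$ --- a normal compact toric variety with $Rf_*IC'^H_Y=IC'^H_X$ for the finite normalization map $f$ --- and uses that the $K_0(\MHM(pt))$-submodule generated by $\im(\chi_{Hdg})$ is stable under pushforward (Remark \ref{module-functorial}), so the conclusion for $Y$ implies it for $X$.
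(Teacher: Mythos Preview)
Your argument is correct and follows the paper's route: parts (a) and (b) reduce to Example \ref{ex411} via the cellular stratifications, and for (c) the paper simply cites (via \cite{MS2}) the constancy of the cohomology sheaves of $IC_X$ along torus orbits, which you supply directly with the $T$-equivariance argument. The normalization step for possibly non-normal toric varieties is a harmless addition not in the paper; the only imprecision is the parenthetical ``like any object of $D^b\MHM(X)$'' --- an arbitrary such object need not be constructible with respect to the fixed cell decomposition --- but this does not matter since only $IC_X^H$ is used, and it is so constructible.
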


\begin{proof}
The first two cases follow from Example \ref{ex411}. 
If $X$ is a toric variety with torus $T$, it is well known that $IC_X$ has constant cohomology sheaves along the strata of the stratification consisting of the torus orbits, e.g., see \cite{MS2} and the references therein. In particular, $[IC'^H_X]$ is virtually stratum-wise constant. \end{proof}


\begin{remark}
    An important example of Corollary \ref{ts}(b) consists of  Schubert varieties in Grassmanians. In this case,  Conjecture \ref{BSY} is already known by results of the recent paper \cite{BSW}, where completely different methods (only working for Grassmanians) are used.
\end{remark}

Using Kleiman's generic transversality result (e.g., in the formulation of
\cite[Theorem 18]{SSW}), the following result can be applied to (iterated) generic translates of Schubert varieties in generalized flag varieties.

\begin{proposition}\label{generic}
Let $X_i\subset M$ for $i=1,2$ be two closed pure-dimensional algebraic subvarieties of a pure-dimensional complex algebraic manifold $M$, which are stratified transversal (with respect to suitable algebraic Whitney stratifications of $X_1$ and $X_2$), with $X:=X_1\cap X_2$ then also 
pure-dimensional. Assume $[IC'^H_{X_i}]$
 belongs to the $K_0(\MHM(pt))$-submodule generated by $\im(\chi_{Hdg}) \subset K_0(\MHM(X_i))$ for $i=1,2$. 
 
 Then also $[IC'^H_{X}]$
 belongs to the $K_0(\MHM(pt))$-submodule generated by $\im(\chi_{Hdg})$ 
 $ \subset K_0(\MHM(X))$. So if $X$ is a compact variety, then $[IC'^H_X]$ satisfies property (H), hence Conjecture \ref{BSY} holds for $X$.
\end{proposition}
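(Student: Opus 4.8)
The plan is to realize $X$ as a stratified-transversal slice of $X_1\times X_2$ inside $M\times M$, propagate the hypothesis through the stability properties of the relevant $K_0(\MHM(pt))$-submodule recorded in Remark \ref{module-functorial}, and conclude via Corollary \ref{submodule}. First I would identify $M$ with the diagonal $\Delta_M\subset M\times M$ via $x\mapsto(x,x)$ and equip $X_1\times X_2\subset M\times M$ with the product $\cS_1\times\cS_2$ of the given Whitney stratifications of $X_1$ and $X_2$, which is again a Whitney stratification. A short tangent-space computation shows that the stratified transversality of $X_1$ and $X_2$ in $M$ (i.e. $T_pS_1+T_pS_2=T_pM$ for strata $S_i\in\cS_i$ and $p\in S_1\cap S_2$) is exactly the condition that the smooth subvariety $\Delta_M$ be stratified transversal to $X_1\times X_2$ in $M\times M$. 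Moreover $\Delta_M\cap(X_1\times X_2)=\{(x,x):x\in X_1\cap X_2\}$ is a copy of $X$, which is pure-dimensional by hypothesis; write $\iota\colon X\hookrightarrow X_1\times X_2$ for the resulting closed embedding, the restriction of the codimension-$c$ regular embedding $\Delta_M\hookrightarrow M\times M$ (with $c=\dim M$). By the transversality just noted, $\Delta_M\hookrightarrow M\times M$ is non-characteristic for $IC^H_{X_1\times X_2}$ (pushed forward to $M\times M$), so the induced restriction $\iota^{*}$ is well behaved on $IC^H_{X_1\times X_2}$.

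Next, by the K\"unneth formula for intersection complexes $IC^H_{X_1\times X_2}=IC^H_{X_1}\boxtimes IC^H_{X_2}$, hence in the primed normalization $[IC'^H_{X_1\times X_2}]=[IC'^H_{X_1}]\boxtimes[IC'^H_{X_2}]\in K_0(\MHM(X_1\times X_2))$. Since each $[IC'^H_{X_i}]$ lies in the $K_0(\MHM(pt))$-submodule generated by $\im(\chi_{Hdg})\subset K_0(\MHM(X_i))$ and this submodule is stable under $\boxtimes$ (Remark \ref{module-functorial}), the class $[IC'^H_{X_1\times X_2}]$ lies in the submodule generated by $\im(\chi_{Hdg})\subset K_0(\MHM(X_1\times X_2))$. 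I would then invoke the transversal-restriction property of intersection complexes (the content underlying Kleiman generic transversality, cf. \cite[Theorem 18]{SSW} and the discussion preceding the statement, transported to Saito's category): because $X$ is the transversal slice $\Delta_M\cap(X_1\times X_2)$, one has $\iota^{*}IC'^H_{X_1\times X_2}\simeq IC'^H_{X}$ in $D^b\MHM(X)$, up to a shift and Tate twist. Applying $\iota^{*}$ to the previous step and using the stability of the submodule under $g^{*}$ (Remark \ref{module-functorial}) — and observing that any shift or twist is harmless because this is a $K_0(\MHM(pt))$-submodule and $[\bQ^H(n)]$ is a unit in $K_0(\MHM(pt))$ — it follows that $[IC'^H_X]$ lies in the $K_0(\MHM(pt))$-submodule generated by $\im(\chi_{Hdg})\subset K_0(\MHM(X))$. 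Finally, when $X$ is compact, Corollary \ref{submodule} gives that $[IC'^H_X]$ satisfies property (H), which is precisely Conjecture \ref{BSY} for $X$.

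I expect the main obstacle to be the transversal-restriction isomorphism $\iota^{*}IC'^H_{X_1\times X_2}\simeq IC'^H_X$ at the level of mixed Hodge modules, and in particular pinning down the normalization (no unwanted shift or Tate twist). In the constructible-sheaf setting this is classical: restricting $IC$ of a variety along a smooth subvariety transversal to a Whitney stratification yields, up to the codimension shift, $IC$ of the slice, and in the primed normalization the shifts cancel because both sides restrict to the constant sheaf in degree $0$ on the regular locus of $X$. Upgrading this to $D^b\MHM$ rests on the compatibility of the intersection-cohomology Hodge module with non-characteristic pullback together with $\iota^{*}\bQ^H=\bQ^H$; the bookkeeping of the codimension-$c$ shift and of a possible Tate twist must be carried out carefully, though, as noted, it does not affect membership in the $K_0(\MHM(pt))$-submodule. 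A secondary, routine point is the linear-algebra verification in the first paragraph that stratified transversality of $X_1,X_2$ in $M$ implies stratified transversality of $\Delta_M$ and $X_1\times X_2$ in $M\times M$, and that the product of two Whitney stratifications is Whitney.
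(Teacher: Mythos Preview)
Your proposal is correct and follows essentially the same route as the paper: reduce to the diagonal embedding $d\colon M\to M\times M$, use $IC'^H_{X_1}\boxtimes IC'^H_{X_2}\simeq IC'^H_{X_1\times X_2}$, invoke the transversal-restriction isomorphism $d^*(IC'^H_{X_1}\boxtimes IC'^H_{X_2})\simeq IC'^H_X$, and conclude via Remark~\ref{module-functorial} and Corollary~\ref{submodule}. The paper simply cites \cite[Example 10.2.34]{MS4} for the restriction isomorphism at the sheaf level and states it in the primed normalization (so no extra shift or twist appears), which resolves the normalization worry you raise.
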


\begin{proof}
Let $d: M\to M\times M$ be the diagonal embedding, so that $d$ is transversal  to $X_1\times X_2$ with respect to the product stratification,
with $X=d^{-1}(X_1\times X_2)$. Then
$$IC'^H_{X}\simeq d^*\left(IC'^H_{X_1}\boxtimes IC'^H_{X_2} \right) $$
(see, e.g., \cite[Example 10.2.34]{MS4} for the corresponding result of the underlying sheaf complexes), so that the claim follows from Remark \ref{module-functorial} and Corollary
\ref{submodule}.
\end{proof}

\begin{example}\label{Richardson}
Let $X$ be a {\it Richardson variety} (i.e., the transversal intersection of a Schubert variety with an opposite Schubert variety), or an {\it intersection variety} in the sense of \cite{BC} (i.e., the
intersection of a finite number of general translates of Schubert varieties)
in a generalized flag variety $M=G/P$. Then Conjecture \ref{BSY} holds for $X$.
\end{example}

We conclude this section by describing a general framework that guarantees that $[IC'^H_X]$ is virtually stratum-wise constant. Let $G$ be a linear algebraic group acting on a normal quasi-projective variety $X$. Then $X$ embeds $G$-equivariantly into an ambient complex algebraic $G$-manifold $Z$
(see e.g. \cite[Theorem 1]{Sum} for $G$ connected and 
\cite[Theorem 5.1.25]{CG} in general), and we can work with $G$-equivariant mixed Hodge modules on $Z$ with support on $X$, see \cite{A,T}. In particular, one can regard $IC^H_X$ as a $G$-equivariant intersection cohomology module (defined by the
equivariant intermediate extension of the constant Hodge module from the regular part of
$X$ to the ambient $G$-variety $Z$); compare with \cite{MS3}, where this setup is considered in the case when $G$ is a complex algebraic torus.
Assume $G$ acts on $X$ with finitely many orbits $S$, and
let $\cS$ be the corresponding complex algebraic stratification of $X$ so that for any orbit $S\in\cS$, $S$ is smooth, ${\bar S}\setminus S$ is a union of
orbits of lower dimension, and the sheaves $L_{S,\ell}:=\cH^{\ell}{IC_X}|_S$ are local systems on $S$ for any $\ell \in \bZ$. Assume moreover that for any $S\in \cS$, the stabilizer $G_s\subset G$ at a point $s\in S$ is connected. Then using \cite[Lemma 1.2]{T}, it follows that $IC'^H_X$
is stratum-wise constant, hence
$[IC'^H_X] \in K_0(\MHM(X))$ is virtually stratum-wise constant. This applies in particular to:
\begin{itemize}
    \item matroid Schubert varieties (e.g., see \cite[Corollary A.2]{Cr}).
    \item simply-connected spherical (scs) varieties in the sense of Brion-Joshua \cite{BJ}, which include both the toric varieties and the Schubert varieties considered in Corollary \ref{ts}. Indeed, stabilizers at points in the group orbits of these scs varieties are connected (cf. \cite[Lemma 3.6]{BJ}).
\end{itemize}
We thus have the following generalization of Corollary \ref{ts}:
\begin{corollary}\label{tsg}
Let $X$ be a normal complex projective variety with an action of a linear algebraic group $G$ with only finitely many $G$-orbits $S$, whose  stabilizer $G_s\subset G$ at a point $s\in S$ is connected for all $G$-orbits $S$.
Then Conjecture \ref{BSY} holds for  $X$.
\end{corollary}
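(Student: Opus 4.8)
The plan is to reduce the statement to the already-established Theorem~\ref{cohVconstant} by verifying its hypothesis, namely that $[IC'^H_X]$ is virtually stratum-wise constant for $X$ a normal projective $G$-variety with finitely many orbits and connected stabilizers. To do this, I would first invoke the discussion immediately preceding the corollary: since $X$ is normal and quasi-projective with a $G$-action, it admits a $G$-equivariant embedding into a smooth $G$-variety $Z$ (by \cite{Sum} in the connected case, \cite{CG} in general), so that $IC^H_X$ can be treated as a $G$-equivariant mixed Hodge module in the sense of \cite{A,T}. The orbit decomposition gives an algebraic stratification $\cS$ of $X$ by the $G$-orbits; normality of $X$ combined with the structure theory of intersection complexes ensures that on each orbit the cohomology sheaves $L_{S,\ell}:=\cH^\ell IC_X|_S$ are local systems, and since $\bar S\setminus S$ is a union of lower-dimensional orbits, $\cS$ is of the type required in Proposition~\ref{ccs}.

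Next I would argue that each such local system $L_{S,\ell}$ is in fact \emph{constant}. This is where the connectedness of stabilizers enters: a $G$-equivariant local system on a homogeneous space $S\cong G/G_s$ is equivalent to a representation of the component group $\pi_0(G_s)$ of the stabilizer, so if $G_s$ is connected the local system must be trivial. Concretely, I would cite \cite[Lemma 1.2]{T}, which says precisely that the cohomology sheaves of a $G$-equivariant complex on a single orbit with connected stabilizer are constant. Applying this to $IC'^H_X$ orbit by orbit shows that $IC'^H_X$ is a stratum-wise constant complex of mixed Hodge modules with respect to $\cS$ in the sense of Definition~\ref{sc}.

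With stratum-wise constancy in hand, Proposition~\ref{ccs} (specifically formula \eqref{f4b}) gives that $[IC'^H_X]\in K_0(\MHM(X))$ is virtually stratum-wise constant, i.e., it lies in the $K_0(\MHM(pt))$-submodule of $K_0(\MHM(X))$ generated by the classes $\big[(j_S)_!\bQ_S^H\big]=\chi_{Hdg}([j_S:S\hookrightarrow X])$ (as spelled out in Remark~\ref{exvcc}). In particular it belongs to the $K_0(\MHM(pt))$-submodule generated by $\im(\chi_{Hdg})$, so Theorem~\ref{cohVconstant} applies: $[IC'^H_X]$ satisfies property~(H), and hence Conjecture~\ref{BSY} holds for $X$. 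The hypothesis of compactness needed for the $L$-class side of property~(H) is supplied by the assumption that $X$ is projective.

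The main obstacle, and the step requiring the most care, is verifying that the equivariant formalism genuinely delivers the triviality of $L_{S,\ell}$ on each orbit --- that is, making precise the interplay between the $G$-equivariant structure on $IC^H_X$ (obtained via equivariant intermediate extension from the regular part) and the restriction to orbits. One must know that the restriction $j_S^* IC'^H_X$ inherits a $G$-equivariant structure as a complex on the homogeneous space $S$, so that \cite[Lemma 1.2]{T} is applicable; this is standard but deserves a sentence of justification, together with a reference (e.g.\ to \cite{MS3} for the analogous torus case) confirming that the setup of Proposition~\ref{ccs} --- smoothness of strata, the closure condition, local constancy of the $L_{S,\ell}$ --- is met. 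Everything downstream of that point is a formal consequence of results already proved in the paper.
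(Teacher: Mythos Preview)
Your proposal is correct and follows essentially the same route as the paper: the argument preceding the corollary in the paper is exactly what you outline --- equivariant embedding via \cite{Sum,CG}, viewing $IC^H_X$ as a $G$-equivariant mixed Hodge module, stratifying by orbits, invoking \cite[Lemma 1.2]{T} to deduce constancy of the cohomology sheaves on each orbit from connectedness of stabilizers, and then applying Theorem~\ref{cohVconstant}. Your additional remarks about checking that restriction to orbits preserves the equivariant structure and that the stratification satisfies the hypotheses of Proposition~\ref{ccs} are the right points of care, and match the level of detail in the paper's discussion.
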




\begin{thebibliography}{10}

\bibitem{A}
P. ~Achar, {\it Equivariant mixed Hodge modules}, preprint, Lecture notes from the Clay Mathematics Institute workshop on Mixed Hodge Modules and Applications (2013), available at https://www.math.lsu.edu/ pramod/docs/emhm.pdf.



\bibitem{BFM}
P.~Baum, W.~Fulton, R.~MacPherson,
{\it Riemann-Roch for singular varieties}, Inst. Hautes \'{E}tudes Sci. Publ. Math. 45 (1975), 101--145.

\bibitem{Ban0}
M.~Banagl,
{\it Topological invariants of stratified spaces}, Springer Monographs in Mathematics. Springer, Berlin, 2007.

\bibitem{Ban1}
M.~Banagl,
{\it Topological and Hodge L-classes of singular covering spaces and varieties with trivial canonical class}, 
Geom. Dedicata 199 (2019), 189--224.

\bibitem{BCS}
M. Banagl, S. E. Cappell, J. L. Shaneson,
{\it Computing twisted signatures and L-classes of stratified spaces}, Math. Ann. 326 (2003), 589--623.
\bibitem{BSW}
M.~Banagl, J.~Sch\"urmann, D.~Wrazidlo,
{\it Topological Gysin coherence for algebraic characteristic classes of singular spaces}, arXiv:2310.15042.

\bibitem{BC}
 S.~Billey, I.~Coskun, 
 {\it Singularities of generalized Richardson varieties}, 
 Comm. Algebra 40 (2012), no. 4, 1466--1495.

\bibitem{Bi} F. Bittner, {\it  The universal Euler characteristic for varieties of characteristic zero}, Compos. Math. 140 (2004),
no. 4, 1011--1032.


\bibitem{BSY}
J.-P. Brasselet, J.~Sch\"{u}rmann,  S.~Yokura,
{\it Hirzebruch classes and motivic Chern classes for singular spaces}, 
J. Topol. Anal.  2 (2010), no. 1, 1--55.


\bibitem{BJ}
M. Brion, R. Joshua, {\it 
Intersection cohomology of reductive varieties},  
J. Eur. Math. Soc. 6 (2004), no. 4, 465--481.

\bibitem{Br} D. Brogan, {\it Invariants of the singularities of secant varieties of
curves}, arXiv:2408.16736.

\bibitem{CH} B. Calm\`es, J. Hornbostel, 
{\it Tensor-triangulated categories and dualities}, 
Theory Appl. Categ. 22 (2009), No. 6, 136--200.

\bibitem{CLMS} S.~E. Cappell, A.~Libgober, L.~Maxim, J.~L. Shaneson,
{\it Hodge genera of algebraic varieties. II.} Math. Ann. 345 (2009), no. 4, 925--972. 

\bibitem{CMSS}
S.~E. Cappell, L.~Maxim, J.~Sch\"{u}rmann, J.~L. Shaneson,
{\it Characteristic classes of complex hypersurfaces},
Adv. Math. 225 (2010), no. 5, 2616--2647.

\bibitem{CMSS2}
S.~E. Cappell, L.~Maxim, J.~Sch\"{u}rmann, J.~L. Shaneson,
{\it Equivariant characteristic classes of singular complex algebraic varieties}, Comm. Pure Appl. Math. 65 (2012), no. 12, 1722--1769.

\bibitem{CMS}
S.~E. Cappell, L.~Maxim,  J.~L. Shaneson,
{\it Hodge genera of algebraic varieties. I.},
Comm. Pure Appl. Math. 61 (2008), no. 3, 422--449.

\bibitem{CS}
S.~E. Cappell, J.~L. Shaneson,
{\it Stratifiable maps and topological invariants}, 
   J. Amer. Math. Soc. 4 (1991), 521--551.
   
\bibitem{CG}
N.~Chriss, V.~Ginzburg, 
{\it Representation theory and complex geometry}, 
Birkh\"{a}user Boston, Inc., Boston, MA, 1997.  


   \bibitem{Cr} C. Crowley,
   {\it Hyperplane arrangements and compactifications of vector groups}, 	arXiv:2209.00052.









\bibitem{FP}
J.~Fern\'andez de Bobadilla, I.~Pallar\'es,
{\it The Brasselet-Sch\"urmann-Yokura conjecture on L-classes of projective rational homology manifolds}, Int. Math. Res. Not. IMRN 2024, no. 19, 13085--13105.

\bibitem{FPS}
J.~Fern\'andez de Bobadilla, I.~Pallar\'es, M.~Saito,
{\it Hodge modules and cobordism classes}, 
 J. Eur. Math. Soc. 27 (2025), no. 2, 773--800.

\bibitem{FPS2}
J.~Fern\'andez de Bobadilla, I.~Pallar\'es, M.~Saito,
{\it Constant coefficient and intersection complex L-classes of projective varieties}, 
arXiv:2407.11769.





\bibitem{GM}
  M.~Goresky, R.~MacPherson, {\it Intersection homology theory}, 
  Topology 149  (1980), 155--162.

  \bibitem{GM2}
  M.~Goresky, R.~MacPherson, {\it Intersection homology. II.},  Invent. Math. 72 (1983), no. 1, 77--129.
  


\bibitem{H} F.~Hirzebruch,  {\it Topological methods in algebraic geometry}, Springer, 1966.








\bibitem{MP}
R.~MacPherson, {\it Chern classes for singular varieties}, 
  Ann. of Math. 100 (1974), 423--432.







\bibitem{MSS0}
L.~Maxim, M.~Saito, J.~Sch\"{u}rmann,
{\it Symmetric products of mixed Hodge modules}, 
J. Math. Pures Appl.  96 (2011), 462--483.


\bibitem{MSS} L.~Maxim, M.~Saito, J.~Sch\"urmann, {\it Hirzebruch-Milnor classes of complete intersections}, 
Adv. Math. 241 (2013), 220--245.


\bibitem{MS0}
L.~Maxim, J.~Sch\"{u}rmann,
{\it Hodge-theoretic Atiyah-Meyer formulae and the stratified multiplicative property}, Singularities I, 145--166, Contemp. Math., 474, Amer. Math. Soc., Providence, RI, 2008.
  
\bibitem{MS1}
L.~Maxim, J.~Sch\"{u}rmann,
{\it Characteristic classes of singular toric varieties}, 
Comm. Pure Appl. Math. 68 (2015), no. 12, 2177--2236.

\bibitem{MS2}
L.~Maxim, J.~Sch\"{u}rmann,
{\it Weighted Ehrhart theory via mixed Hodge modules on toric varieties},
 Int. Math. Res. Not. IMRN 2025, no. 7,  25 pp.

\bibitem{MS3}
L.~Maxim, J.~Sch\"{u}rmann,
{\it  Weighted Ehrhart theory via equivariant toric geometry}, 
arXiv:2405.02900.

\bibitem{MS4}
L.~Maxim, J.~Sch\"{u}rmann,
{\it Constructible sheaf complexes in complex geometry and Applications}, 
in {\it Handbook of Geometry and Topology of Singularities, vol. III}, 
679--791, Springer, Cham, 2022.

\bibitem{Mey}
W. Meyer, {\it Die Signatur von lokalen Koeffizientensystemen und Faserb\"{u}ndeln}, Bonner Mathematische Schriften 53, (Universit\"{a}t Bonn), 1972.
  






  
 
\bibitem{RW}
O. Randal-Williams, {\it The family signature theorem}, Proc. Roy. Soc. Edinburgh Sect. A 154 (2024), no. 6, 2024--2067.


\bibitem{Saito88}
M.~Saito,
{\it Modules de {H}odge polarisables},
Publ. Res. Inst. Math. Sci. 24 (1989), 849--995.

\bibitem{Saito89}
M.~Saito,
{\it  Duality for vanishing cycle functors}, Publ. Res. Inst. Math. Sci. 25 (1989), no. 6, 889--921.

\bibitem{Saito90}
M.~Saito,
{\it Mixed Hodge Modules}, 
Publ. Res. Inst. Math. Sci. 26 (1990), 221--333.











\bibitem{S}
J.~Sch\"{u}rmann,
 {\it Characteristic classes of mixed {H}odge modules}, 
 in {\it Topology of stratified spaces}, 419--470, Math. Sci. Res. Inst. Publ., 58, Cambridge Univ. Press, Cambridge, 2011.
 
\bibitem{S2}
J.~Sch\"{u}rmann,
{\it Open problems: Characteristic class theories for singular varieties (a)}, 
in {\it Topology of Stratified Spaces},
475--476, Math. Sci. Res. Inst. Publ., 58, Cambridge Univ. Press, Cambridge, 2011.

\bibitem{SSW}
J.~Sch\"{u}rmann, C.~Simpson, B.~Wang, 
{\it A new generic vanishing theorem on homogeneous varieties and the positivity conjecture for triple intersections of Schubert cells}, Compos. Math. 161 (2025), no. 1, 1--12.

\bibitem{SW} J.~Sch\"{u}rmann, J.~Woolf,
{\it Witt groups of abelian categories and perverse sheaves}, Annals of K-theory, Nr. 4 (2019), 621--670.

\bibitem{Sum}
H.~Sumihiro, 
{\it Equivariant completion},
J. Math. Kyoto Univ. 14 (1974), 1--28.
 
 \bibitem{T}
 T.~Tanisaki, {\it Hodge modules, equivariant K-theory and Hecke algebras}, Publ. Res. Inst. Math. Sci. 23 (1987), no. 5, 841--879.
   


\bibitem{W} J.~Woolf, {\it Witt groups of sheaves on topological spaces}, Comment. Math. Helv. 83 (2008), no. 2, 289--326.

\bibitem{Yo} B.~Youssin, {\it Witt groups of derived categories}, K-Theory 11 (1997),
373--395.

\end{thebibliography}
\end{document}